\documentclass[12pt]{amsart}
\usepackage{amsfonts,graphicx,verbatim,amssymb,amsmath,multirow,bbold}
\usepackage[all]{xy}
\usepackage[active]{srcltx}
\usepackage{hyperref}

\DeclareSymbolFont{bbold}{U}{bbold}{m}{n}
\DeclareSymbolFontAlphabet{\mathbbold}{bbold}

\newtheorem{thm}{Theorem}[subsection]
\newtheorem{cor}[thm]{Corollary}
\newtheorem{lem}[thm]{Lemma}
\newtheorem{prop}[thm]{Proposition}

\theoremstyle{definition}
\newtheorem{dfn}[thm]{Definition}
\theoremstyle{remark}
\newtheorem{rem}[thm]{Remark}
\newtheorem{ex}[thm]{Example}

\def\A{\mathbb{A}}

\def\C{\mathbb{C}}

\def\R{\mathbb{R}}

\def\Z{\mathbb{Z}}
\def\1{\mathbbold{1}}

\def\Cc{\mathcal{C}}

\def\Fc{\mathcal{F}}

\def\Ic{\mathcal{I}}

\def\Sc{\mathcal{S}}

\def\ef{\mathfrak{e}}

\def\gf{\mathfrak{g}}
\def\lf{\mathfrak{l}}

\def\qf{\mathfrak{q}}

\def\hf{\mathfrak{h}}

\def\al{\alpha}
\def\be{\beta}
\def\ga{\gamma}
\def\eps{\varepsilon}
\def\e{\varepsilon}
\def\la{\lambda}

\def\ta{\theta}
\def\si{\sigma}
\def\om{\omega}
\def\de{\delta}

\def\d{\partial}
\def\0{\varnothing}
\def\sm{\setminus}
\def\ol{\overline}

\def\le{\leqslant}
\def\ge{\geqslant}
\def\<{\langle}
\def\>{\rangle}

\def\el{\mathrm{EL}}

\def\uc{\mathbb{S}} 

\def\disk{\mathbb{D}}

\def\mod{\mathrm{mod}}

\def\Len{\mathrm{Len}}
\def\diam{\mathrm{diam}}
\def\area{\mathrm{area}}

\def\ch{\mathrm{CH}}

\def\Hmes{\mathrm{Hmes}} 
\def\Hdim{\mathrm{Hdim}} 

\def\th{\mathrm{Th}}

\def\Arg{\mathrm{Arg}}

\begin{document}

\title[Equipotential annuli and the Hausdorff measure]{Renormalization, equipotential annuli and the Hausdorff measure}

\author[A.~Blokh]{Alexander~Blokh}

\thanks{The first named author was partially supported by NSF grant DMS-2349942.}


\thanks{The 
second named author was partially
supported by NSF grant DMS--1807558}


\author[L.~Oversteegen]{Lex Oversteegen}

\author[V.~Timorin]{Vladlen~Timorin}

\thanks{The 
third named author was supported by
the HSE University Basic Research Program.
}

\begin{abstract}
For a complex single variable polynomial $f$ of degree $d$, let $K$ be its \emph{filled Julia set}, i.e., the union of all bounded orbits. Assume that $K$ has an invariant component $K^*$ on which $f$ acts as a degree $d_*<d$ map. This is a simplest instance of \emph{holomorphic polynomial-like renormalization} (Douady--Hubbard). One can associate a certain Cantor-like subset $G'$ of the circle with $K^*$; it is defined as the set of arguments of all smooth or broken rays to $K^*$. We will describe a role the Hausdorff dimension of $G'$ and the respective Hausdorff measure play in geometry of $K^*$. In particular, we give upper and lower bounds on the modulus of renormalization in terms of the Hausdorff measure of $K^*$.
\end{abstract}

\maketitle

\section{Introduction}
\label{s:intro}
In this paper, we consider one of the simplest instances of holomorphic polynomial-like renormalization \cite{DH85}.
Namely, let $f:\C\to\C$ be a degree $d$ polynomial such that the filled Julia set $K(f)$
has an invariant component $K^*$ with $f|_{K^*}$ of degree $d_*$, where $1<d_*<d$.
As is well known, $K^*$ is then a polynomial-like filled Julia set for a suitable restriction of $f$.

On can take a Jordan disk $U^*\supset K^*$ bounded by an equipotential
such that $\d U^*$ contains at least one critical point of $f$ while all critical points inside $U^*$ lie in $K^*$.
Such $U^*$ defines a polynomial-like map $f:U^*\to f(U^*)$, see Fig. \ref{fig:Kstar}.
Our main objective is to estimate the modulus of $U^*\sm K^*$ from above and below
in terms of the value $\eps$
of the Green function on $\d U^*$. While a lower bound can be obtained by a straightforward application of the Gr\"otzsch inequality,
an upper bound is much more subtle. The answer will involve the Hausdorff dimension and the corresponding Hausdorff measure
of a certain subset of the circle associated with $K^*$.

\begin{figure}
  \centering
  \includegraphics[width=.8\textwidth]{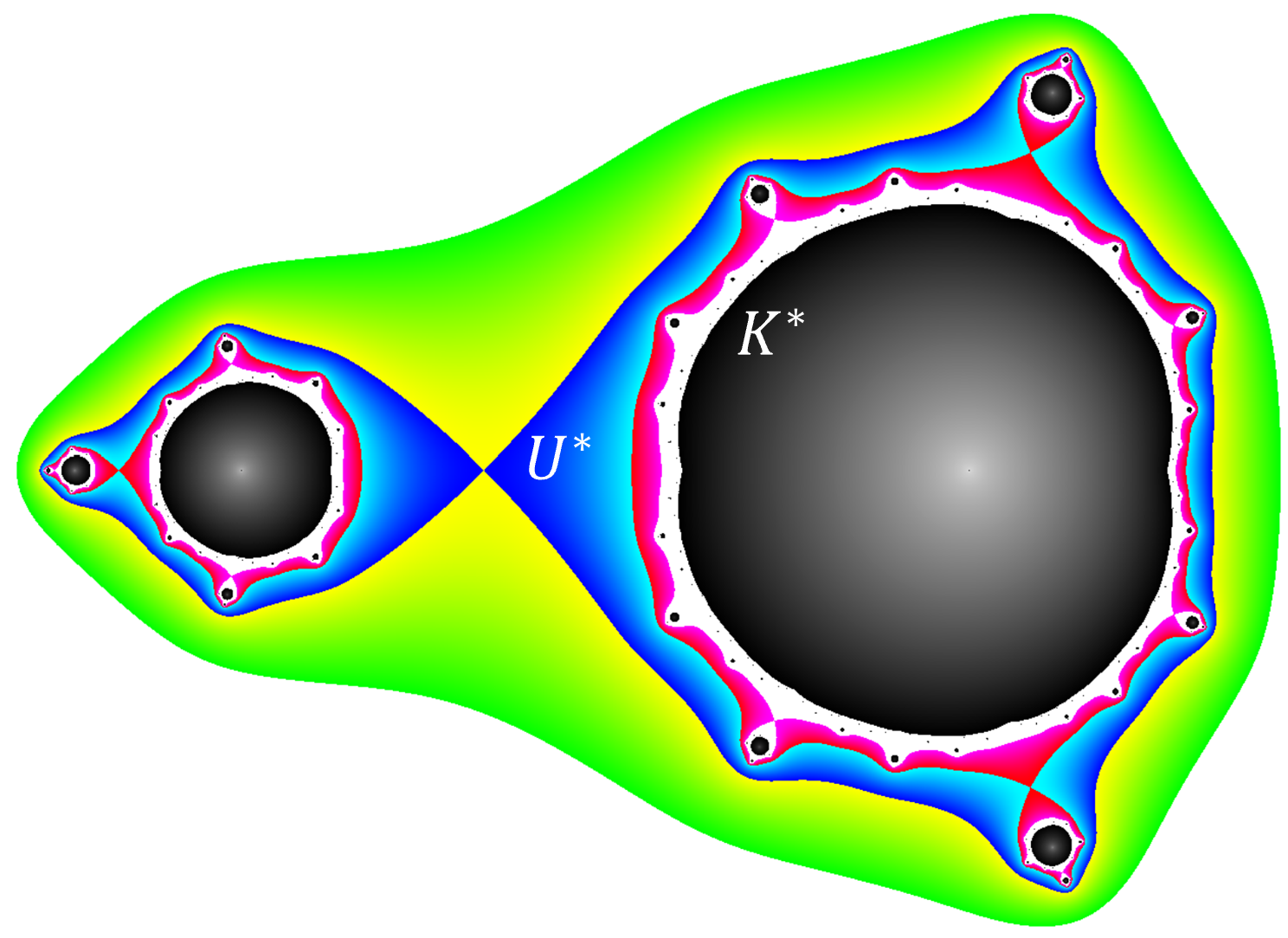}
  \caption{The dynamical plane of a cubic polynomial $f$, for which the PL map $f:U^*\to f(U^*)$
  is hybrid equivalent to $z\mapsto z^2$.}\label{fig:Kstar}
\end{figure}

Both an upper and a lower bound on $\mod(U^*\sm K^*)$ appeared in DeMarco--McMullen \cite[Theorem 4.4]{DM08}.
However, their main interest was in $\eps\to\infty$ (where $\eps$ is the equipotential level 
 of $\d U^*$), while we are more interested in the asymptotics as $\eps\to 0$.
Bounds of \cite{DM08} are $C\eps \le \mod(U^*\sm K^*) \le C\eps + C'$, 
 with the same $C$ on both sides, but with a positive additive error term $C'$ in the upper estimate.
This upper bound fails to asymptotically vanish as $\eps\to 0$, while we want not only to see 
 that $\mod(U^*\sm K^*)\to 0$ as $\eps\to 0$, which is straightforward, but also to have an estimate on the convergence rate.
Our inequalities have the form $\underline{C}\eps\le \mod(U^*\sm K^*)\le \ol C\eps$, where
the coefficients $\underline{C}$ and $\ol C$ do not depend on $\eps$ but depend
on the ``combinatorics'' of $f|_{K^*}$.
A more precise meaning of this is explained below.

\subsection{Equipotentials and rays}
Here is a brief and informal discussion, focusing on geometric and physical intuition;
 the reader is referred to Section \ref{s:cgapdata} for more formal definitions and to \cite{Bro65,DH84} as original sources.
A compactum $K\subset\C$ defines the \emph{Green function} $g_K:\C\to\R_{\ge 0}$ that vanishes precisely on $K$,
 is harmonic on $\C\sm K$, and has a logarithmic singularity at infinity
 (more precisely, $g_K(z)-\log|z|$ is bounded near infinity).

When $K=K(f)=\{z\in\C\mid f^n(z)\not\to \infty\}$ for a complex polynomial $f:\C\to \C$,
 the Green function $g_f:=g_K$ of $K$ acquires a dynamical meaning.
Namely, $g_f$ measures the escape rate: the larger $g_f(z)$ is, the faster $z$ escapes
 to infinity under the iterates of $f$ (observe that points of $K$ do not escape, which
 is consistent with $g_f=0$ on $K$).
Level curves of $g_f$ are called \emph{equipotentials}, and orthogonal trajectories are called \emph{external intervals}.
\emph{External rays}, i.e., unbounded external intervals, can be labeled by \emph{angles} $\ta\in\R/\Z$ as $R_f(\ta)$
 so that $f(R_f(\ta))\subset R_f(d\ta)$, where $d$ is the degree of $f$.
The angle $\ta$ is then called the \emph{external argument} of the ray $R_f(\ta)$.

\subsection{Polynomial-like renormalization}
A \emph{polynomial-like (PL) map} \cite{DH85} is a proper holomorphic map $F:V\to U$ of degree $\deg(F)>1$,
 where $V$ and $U$ are Jordan domains such that $\ol V$ is compact and contained in $U$.
One is usually interested in dynamics of $F$ only near its (PL) \emph{filled Julia set}
$$
K(F)=\{z\in V\mid f^n(z)\in V,\ \forall n\in\Z_{\ge 0}\}.
$$
Polynomials give the motivating example of PL maps: given a polynomial $P$,
 define $U$ as a sufficiently large disk, set $V:=P^{-1}(U)$, and $F:=P|_V$.
However, many non-polynomial maps admit PL restrictions:
 for example, adding a uniformly small holomorphic map on $U$ to $F$ while simultaneously
 replacing $U$ with a suitable smaller domain yields another PL map close to $F$.
Also, polynomials of large degree often admit PL restrictions of smaller degree.
If $Q_c(z):=z^2+c$, then degree two PL restrictions of the iterates of $Q_c$
 play a key role in the proof \cite{DH85} that the Mandelbrot set contains many homeomorphic copies of itself.
More generally, passing from a polynomial $P$ to a PL restriction of an iterate $P^n$ of $P$
 is a powerful method in holomorphic dynamics known as \emph{PL renormalization}.

What makes PL renormalization work is the following \emph{straightening theorem} of Douady and Hubbard \cite{DH85}:
 every PL map $F$ is topologically conjugate near $K(F)$ to a polynomial.
In fact, this topological conjugacy $h$ can be made quasi-conformal on a neighborhood of $K(F)$ and
 satisfy $\ol\d h=0$ (in the sense of distributions) on $K(F)$; such conjugacies are called \emph{hybrid}.
A polynomial hybrid conjugate to a PL map $F$ with connected $K(F)$ is unique, up to affine change of variables.
The space of PL maps (with a suitable topology; see \cite{Sul88,Lyu99}) is non-compact; however, the only way a variable PL map $F$ can go to ``infinity''
 (i.e., to leave any compact part of the space) is when the \emph{fundamental annulus} $U\sm\ol V$ degenerates.

Recall that any topological annulus $A$ (an open subset of $\C$ homeomorphic to a cylinder)
 is conformally isomorphic to a unique Euclidean cylinder of circumference one and height $\mu$;
 the number $\mu$, called the \emph{modulus} of $A$ and denoted by $\mod(A)$, is therefore
 a conformal invariant of $A$.
Having $\mod(U\sm\ol V)$, the so called \emph{renormalization modulus}, bounded below,
 for a variable PL map $F:V\to U$, means keeping $F$ in a compact part of the space of all PL maps.
For this reason, lower bounds on $\mod(U\sm \ol V)$, known as \emph{a priori bounds for $F$}, are important.
A priori bounds play a key role in the Feigenbaum--Coullet--Tresser universality \cite{Sul88,Lyu99,McM96},
 local connectedness of the Mandelbrot set at ``most'' points \cite{Hub93,Mil00a}, its universal appearance in
 analytic parameter spaces \cite{McM00}, etc.

Upper bounds on the modulus $\mod(U\sm\ol V)$ measure how fast $F$ escapes to infinity
 in certain families of PL maps; such upper bounds are a central theme of this paper.
Note that upper bounds on the \emph{root annulus} $\mod(U\sm K(F))$ are stronger
 than the upper bounds on the corresponding \emph{fundamental annulus} $U\sm\ol V$ (the former imply the latter);
 for this reason, we mostly consider the root annulus.
A prototypical example of how an upper bound of the renormalization modulus
implies some interesting dynamical facts is presented in \cite{BOT24}.

\subsection{Combinatorics of $f|_{K^*}$}
\label{ss:combK*}
Identify $\R/\Z$ with the unit circle $\uc=\{z\in\C\mid |z|=1\}$ by the map $\ta\mapsto e^{2\pi i\ta}$.
The (Riemannian) metric on $\R/\Z$ is induced from the standard Euclidean metric on $\R$;
 lengths will refer to this metric.

Recall our basic setup: $f:\C\to\C$ is a degree $d$ polynomial such that $K(f)$ contains
 an invariant component $K^*$, on which $f$ is (generically) $d_*$-to-one;
 we assume that $1<d_*<d$.
Then there is $\eps=\eps_f>0$ with the property that the equipotential $\{g_f=\eps\}$
 contains a Jordan curve bounding a disk $U^*$ such that $f$ has no critical points in $U^*\sm K^*$
 while the boundary $\d U^*$ of $U^*$ contains at least one critical point.
Note that $\eps$ depends not only on $f$ but also on the choice of $K^*$.
However, since the latter choice will always be clear, we omit $K^*$ from the notation.
It is straightforward and well known that $f:U^*\to f(U^*)$ is polynomial-like of degree $d_*$.

Let $G'$ be the set of arguments of all external rays or broken rays that accumulate in $K^*$;
 the set $G'$ consists of the \emph{perfect part} $G'_p$, which is a Cantor set,
 and the at most countable set $G'\sm G'_p$.
Complementary components of $G'$ in $\uc$ are called \emph{holes} of $G'$.
Even though $G'$ has countably many holes, it suffices to know finitely many holes to recover $G'$.
Namely, every hole of length $<\frac{1}{d}$ is mapped by the $d$-tupling map $\si_d:x\mapsto dx\pmod \Z$
 to another hole; hence, the finitely many holes of length $\ge \frac{1}{d}$ determine $G'$.

Let $\om_+$ be the fastest escaping (i.e., the closest to infinity in terms of $g_f$) critical point of $f$.
By definition, $g_f(\om_+)\ge \eps$.
We will need the nonnegative integer $m=m_f$ with the property $d^m\eps\le g_f(\om_+)<d^{m+1}\eps$.
Our estimates of $\mod(U^*\sm K^*)$ will depend only on $G'$ and $m$.

\subsection{Hausdorff measure and dimension of $G'$}
It is not hard to see that $\si_d:G'\to G'$ is generically $d_*$-to-one.
A combinatorial part of this paper deals with geometry of $G'$.

\begin{thm}
\label{t:main-comb}
The set $G'$ has Hausdorff dimension $\delta_*:=\log_d d_*$.
Moreover, the $\delta_*$-Hausdorff measure $\Hmes^{\delta_*}(G')$ of $G'$ satisfies the inequalities
$$
\frac 1{d_*^2}\le \Hmes^{\delta_*}(G')\le \frac{d-d_*}{d_*-1} d_*^{\delta_*}.
$$
\end{thm}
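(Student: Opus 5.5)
The plan is to exhibit $G'$ as the attractor of a self-similar iterated function system. It would have $d_*$ branches, each a contraction of ratio $1/d$, satisfying the open set condition. The dimension formula $\delta_*=\log_d d_*$ then follows from the Moran--Hutchinson theory, and the two Hausdorff measure bounds come from an explicit covering (upper bound) and the mass distribution principle (lower bound).

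\emph{Step 1: Markov structure.} Let $H_1,\dots,H_k$ be the finitely many holes of $G'$ of length $\ge\frac1d$ (the major holes), and let $L_0=\uc\sm\bigcup H_i$. Since $\si_d$ maps every hole of length $<\frac1d$ onto a hole, one gets
$$
G'=\bigcap_{n\ge0}L_n,\qquad L_n=L_0\cap\si_d^{-1}(L_{n-1}).
$$
I would then show that $L_0\cap \si_d^{-1}(L_0)$ splits into closed arcs $J_1,\dots,J_{d_*}$, each of length less than $\frac1d$. On each $J_j$, the map $\si_d$ is an affine expansion by the factor $d$ carrying $J_j\cap G'$ onto $G'$. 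This uses that $\si_d|_{G'}$ is generically $d_*$-to-one and that the holes have pairwise disjoint interiors. The inverse branches $\phi_j=(\si_d|_{J_j})^{-1}$ restricted to the convex hull of $G'$ then form an IFS of $d_*$ similarities of ratio $1/d$ with attractor $G'$. The open set condition holds with the holes as separating gaps; the countable set $G'\sm G'_p$ consists of endpoints and does not affect anything. Hence $\Hdim G'=\delta_*$ and $0<\Hmes^{\delta_*}(G')<\infty$.

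\emph{Step 2: upper bound.} At level $n$, $G'$ is covered by $d_*^n$ arcs of the form $\phi_{j_1}\circ\dots\circ\phi_{j_n}(\ch)$, where $\ch$ is the complement of a single largest major hole (a shortest arc containing $G'$). Each such arc has length $d^{-n}|\ch|$. Hence
$$
\Hmes^{\delta_*}(G')\le d_*^n\bigl(d^{-n}|\ch|\bigr)^{\delta_*}=|\ch|^{\delta_*}.
$$
It remains to bound $|\ch|$ via a length count. The holes of $L_0$ have total length $1-|L_0|$. Comparing $|L_0|$ with $d\cdot\sum_j|J_j|$ and iterating a geometric series in the ratio $d_*/d$ should give a bound on $|\ch|$. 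Choosing the level and the covering arcs near the branch points carefully should produce exactly $\frac{d-d_*}{d_*-1}d_*^{\delta_*}$. I expect some adjustment here, namely covering at level one by the arcs $J_j$ themselves and using $d_*^{\delta_*}=d_*^{\log_d d_*}$.

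\emph{Step 3: lower bound, the main obstacle.} Let $\mu$ be the natural self-similar probability measure on $G'$, giving mass $d_*^{-n}$ to each level-$n$ piece. Given an arc $I$ with $d^{-n-1}\le|I|<d^{-n}$, I would show that $I$ meets at most $d_*$ level-$(n+1)$ pieces inside at most two adjacent level-$n$ pieces. The reason is that a level-$n$ piece has diameter $\le d^{-n}$ and distinct pieces are separated by images of holes. This gives
$$
\mu(I)\le d_*\cdot d_*^{-n}\le d_*^2\,|I|^{\delta_*},
$$
and the mass distribution principle then yields $\Hmes^{\delta_*}(G')\ge d_*^{-2}$. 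The delicate point is controlling how many pieces a short arc can meet, uniformly in the geometry of the holes. This is where the constant $d_*^2$ (rather than something depending on $d$ or on the hole lengths) has to come from, and I would try first to use the fact that pieces of a fixed level are linearly ordered on the circle and have equal $\mu$-mass.
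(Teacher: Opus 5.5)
\textbf{Step 1 is false, and Steps 2 and 3 are built on it.}

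First, let $J$ be a component of $L_0\cap\si_d^{-1}(L_0)$. Then $\si_d(J)$ is a connected subset of $L_0$, so it lies in a single component of $L_0$ (compare the last claim of Lemma \ref{l:Ccn}). Each component of $L_0$ has its endpoints in $G'$. Hence no $J$ can carry $J\cap G'$ onto $G'$ once $G'$ has two or more major holes.

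Second, even with a single major hole the picture fails, and in general $G'$ is not self-similar. Take $d=3$ and let $G'$ be the set of points whose $\si_3$-orbit avoids $(\frac38;\frac34)$. This is an invariant gap of degree $2$; your Step 1 uses nothing beyond such combinatorics. Its hull is $L_0=[\frac34;\frac{11}8]$.
\begin{itemize}
\item The set $L_0\cap\si_3^{-1}(L_0)=[\frac34;\frac{19}{24}]\cup[\frac{11}{12};\frac98]\cup[\frac54;\frac{11}8]$ has three components, not $d_*=2$. Only the middle one maps onto $L_0$; the others map onto $[\frac14;\frac38]$ and $[\frac34;\frac98]$.
\item Suppose $G'$ were covered by two images of $L_0$ under similarities of ratio $\frac13$. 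These images lie in $L_0$ and must contain its endpoints, which belong to $G'$, so they would be $[\frac{18}{24};\frac{23}{24}]$ and $[\frac{28}{24};\frac{33}{24}]$.
\item But $\frac{25}{24}\equiv\frac1{24}$ lies in $G'$, since its orbit is $\frac1{24}\mapsto\frac18\mapsto\frac38\mapsto\frac18\mapsto\cdots$, and it lies between these two arcs.
\end{itemize}
The underlying reason is that inverse branches of $\si_d$ restricted to $G'$ are only piecewise similarities. They break at images of endpoints of major holes. What you actually have is a Markov-type structure generated by the orbits of these points, and those orbits need not be finite.

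So the Moran--Hutchinson formula, the cover by $d_*^n$ equal pieces in Step 2, and the equal-mass measure in Step 3 are all unavailable. Separately, Step 2 never actually produces the constant $\frac{d-d_*}{d_*-1}d_*^{\delta_*}$; you only assert it should come out.

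\textbf{What the paper does instead.} It avoids self-similarity. It replaces $G'$ by $G'_\A$ as in Theorem \ref{t:dyn-holes}, which changes $G'$ only by a countable set. In $G'_\A$ every hole of level $-k$ has length exactly $d^{-k-1}$. The paper then works with the connectors $\Cc_{-n}$ and the semiconjugacy $\psi$ onto $\si_{d_*}$.

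Your Step 3 survives in this form, using the $\psi$-pullback of Lebesgue measure instead of a self-similar measure:
\begin{itemize}
\item An arc $Q$ with $d^{-n-1}\le|Q|<d^{-n}$ contains no hole of level $\ge -n+1$, so it meets only one level $-n+1$ connector.
\item That connector's $\psi$-image has length at most $d_*^{-n+1}$ by Lemma \ref{l:psi-distance}. This is the H\"older bound of Lemma \ref{l:psi-Hol}.
\item Proposition \ref{p:Falc} then gives $\Hmes^{\delta_*}(G')\ge d_*^{-2}$.
\end{itemize}
The uniformity you were worried about comes exactly from the normalized hole lengths $d^{-k-1}$.

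For the upper bound, count connectors rather than $d_*^n$ equal pieces:
\begin{itemize}
\item Collapsing the at most $d-d_*$ critical voids adds at most $d-d_*$ components, so $b_{-n}\le d-d_*+d_*b_{-n+1}$.
\item Hence $b_{-n}<c_0d_*^{n+1}$ by Lemma \ref{l:bn}.
\item Each connector has length at most $d^{-n}\la_0\le d_*d^{-n-1}$ by Lemma \ref{l:lan}.
\item So the covering sum is at most $c_0d_*^{\delta_*}$.
\end{itemize}
The factor $c_0=\frac{d-d_*}{d_*-1}$ is exactly what the self-similar picture misses.
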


Previous results in this direction are due to Petersen and Zakery \cite{PZ22}:
they proved that $\delta_*$ is an upper bound on the Hausdorff dimension of $G'$
while we prove that in fact the Hausdorff dimension of $G'$ \emph{equals} $\delta_*$.
Our approach is hands-on and elementary;
simultaneously with proving that the Hausdorff dimension of $G'$ is $\delta_*$
it gives bounds on the $\delta_*$-Hausdorff measure of $G'$.

\subsection{Bounds on the modulus of $U^*\sm K^*$}
The main
 part of the paper establishes inequalities on $\mod(U^*\sm K^*)$.

\begin{thm}
\label{t:main-an}
Let $f$ be a degree $d$ polynomial, and $f:U^*\to f(U^*)$ be a degree $d_*$
 polynomial-like map as above.
If $m>0$, then
$$
\mod(U^*\sm K^*)\ge \left(\frac{d}{d-1}\right)^{m+1}\hspace{-6pt}\cdot\frac{\eps}{2\pi\,d_*}
$$
 and, on the other hand,
$$
\mod(U^*\sm K^*) \le
\frac{d^{3m+2}}{d_*^{3m-2}}\cdot \frac{(d-d_*)\eps}{2\pi\cdot\Hmes^{\delta_*}(G')^2}\le
\frac{d^{3m+2}(d-d_*)\eps}{2\pi\cdot d_*^{3m-6}}.
$$
where $\eps=\eps_f$ and $m=m_f$ are as defined in Section \emph{\ref{ss:combK*}}.
\end{thm}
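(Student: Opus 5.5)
We work in Böttcher coordinates. Let $\phi:\C\sm K(f)\to\C\sm\ol{\mathbb D}$ be the Böttcher map, defined on $\{g_f>g_f(\om_+)\}$, and pass to logarithmic coordinates $w=\log\phi$, so that $g_f=\mathrm{Re}\,w$ and the external argument is $\mathrm{Im}\,w/2\pi$. The annulus $A=U^*\sm K^*$ is foliated by the equipotentials $\{g_f=t\}$, $0<t<\eps$, and by the (possibly broken) ray segments landing at $K^*$. Their arguments at level near $0$ form $G'$. We estimate $\mod(A)$ through extremal length, using the family $\Gamma$ of curves joining the two boundary components. We have $\mod(A)=1/\el(\Gamma)$ and $\mod(A)=\el(\Gamma^*)$, where $\Gamma^*$ is the family of curves separating the boundaries.

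\textbf{Lower bound.} Find a round-annulus model in which $A$ sits. Take the equipotential level $\eps$ and push forward by $f^{m+1}$. By the definition of $m$, the level $d^{m+1}\eps>g_f(\om_+)$ lies in the Böttcher domain. Hence $f^{m+1}(A)\supset$ a region whose modulus is controlled by the round annulus $\{0<\log|\phi|<d^{m+1}\eps\}$ near a disk. Alternatively, and more simply, apply the Grötzsch inequality along the tower $f^{j}(U^*)\sm K^*$, $j=0,\dots,m$. Each step is a degree $d_*$ covering, so $\mod(f(U^*)\sm K^*)=d_*\mod(U^*\sm K^*)$. The annulus $f^{m+1}(U^*)\sm K^*$ contains an annulus that is the preimage of a Euclidean annulus in the $w$-plane of modulus comparable to $d^{m+1}\eps/2\pi$, up to the factor $(d-1)^{-(m+1)}$. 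That factor comes from summing the geometric series of potentials $\eps(1+d+\dots+d^m)\ge\eps d^{m+1}/(d-1)\cdot$const when comparing nested annuli. We will use a length–area estimate with the metric $|dw|$ restricted to $\{g_f<\eps\}\cap U^*$. The area is at most $\eps\cdot 1$ (total argument length $\le 1$), and every separating curve has $|dw|$-length $\ge 2\pi$-normalized $1/d_*$. This gives $\el(\Gamma^*)\ge \eps/(2\pi d_*)$, and the $m$-dependent factor is obtained by iterating.

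\textbf{Upper bound.} This is the main work. We bound $\el(\Gamma)$ from below by constructing an admissible metric via the Hausdorff measure on $G'$. By Theorem \ref{t:main-comb} there is a measure $\nu$ on $G'$, namely $\Hmes^{\delta_*}$ normalized, or a Frostman measure, with $\nu(I)\le |I|^{\delta_*}$ for intervals and total mass $\Hmes^{\delta_*}(G')$. Pulling it back along the ray foliation, take the metric $\rho=h(\mathrm{arg})\,|dw|$ on the set of ray segments at levels $(0,\eps)$ and $0$ elsewhere. Every curve in $\Gamma$ must cross from level $\eps$ to $K^*$. Near $K^*$, however, it need not follow rays, which is the obstacle. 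We will handle it by using the separating family instead. By $\mod(A)=\el(\Gamma^*)=1/\el(\Gamma)$, we need an upper bound on $\el(\Gamma^*)$, that is, a lower bound on $\el(\Gamma)$, by exhibiting one good metric $\rho$ with
\[
\el(\Gamma)\ge \frac{\inf_{\gamma\in\Gamma}L_\rho(\gamma)^2}{\area_\rho(A)}.
\]
We choose $\rho$ supported on the lifts under $f^{-n}$ of the fundamental region between levels $\eps/d$ and $\eps$, with weights $d_*^{-n}$ times the $\nu$-density. The self-similarity $\si_d$ ($d_*$-to-one on $G'$) makes each layer contribute equally. The $\rho$-length of any $\gamma\in\Gamma$ is then $\gtrsim\Hmes^{\delta_*}(G')$ per layer. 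The area is summed with ratio governed by $(d/d_*)$ powers, which yields the factor $d^{3m+2}/d_*^{3m-2}$ and $(d-d_*)\eps$.

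The key difficulty is ensuring that paths in $\Gamma$ which wander between broken rays and cross holes of $G'$ still pick up length. We plan to argue that crossing a hole at level $t$ costs at least its angular width, which by the Frostman bound dominates the $\nu$-mass skipped. The critical points at levels between $\eps$ and $d^{m}\eps$ distort this comparison by at most a factor $(d/d_*)^{3m}$. Finally, the second inequality follows by inserting $\Hmes^{\delta_*}(G')\ge 1/d_*^2$ from Theorem \ref{t:main-comb}.
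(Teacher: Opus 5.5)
\textbf{The duality is reversed, and the lower-bound paragraph does not work.} With $\mod$ normalized as height over circumference (which is what makes $\mod\asymp\eps$ in the statement), one has $\mod(A)=\el(\Gamma)=1/\el(\Gamma^*)$ for the joining family $\Gamma$ and the separating family $\Gamma^*$ (Theorem~\ref{t:mod-above}). You wrote $\mod(A)=1/\el(\Gamma)=\el(\Gamma^*)$ instead. As a result, both of your length--area schemes point the wrong way: a lower bound on $\el(\Gamma^*)$ bounds $\mod$ from above, and a lower bound on $\el(\Gamma)$ bounds it from below.

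The ingredients of the lower bound also fail on their own terms:
\begin{itemize}
\item Separating curves have no positive lower bound on $|dw|$-length, since $O^*(t_{-n})$ has $\ef$-length $2\pi\mu_{-n}=2\pi(d_*/d)^n\mu_0\to0$.
\item With your own numbers, $L^2/\area$ would be of order $1/\eps$, not $\eps/(2\pi d_*)$.
\item $f\colon f^j(U^*)\sm K^*\to f^{j+1}(U^*)\sm K^*$ is not a covering for $j\ge1$, because $\d U^*\subset f(U^*)$ contains a critical point. So the tower does not iterate.
\end{itemize}
What works is the paper's route. Gr\"otzsch together with $\mod(A_{-n})=d_*^{-n}\mod(A_0)$ gives $\mod(U^*\sm K^*)\ge\mod(A_0)/(d_*-1)$. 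In the flat metric $\ef$, the annulus $A_0$ contains nested flat cylinders of total height $(d-1)\eps$ and circumference at most $2\pi\mu_1$. The factor $(d/(d-1))^{m+1}$ comes from $\mu_n\le((d-1)/d)^{m+1-n}$ (Lemma~\ref{l:mun}), i.e.\ from the degrees $d_n\le d-1$ of $\si_d\colon\Cc_n\to\Cc_{n+1}$ for $0\le n\le m$. It does not come from a geometric series of potentials.

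\textbf{The upper bound is missing its decisive step.} Read as a length--area estimate for $\Gamma^*$, your layered metric has the right scaling: $d_*^{-n}$ times the $f^n$-pullback reproduces the paper's $\qf=(2\pi\mu_n)^{-1}\ef$ on the rectangles $\Sc_n$. But several things are wrong or missing.
\begin{itemize}
\item There is no ``$\nu$-density''. Since $|G'|=0$, the measure $\Hmes^{\delta_*}|_{G'}$ is singular and cannot be spread along the ray foliation.
\item Your claim that crossing a hole costs its angular width is false for a curve passing just below the point where the hole's two boundary rays merge.
\item The Frostman bound $\nu(I)\le|I|^{\delta_*}$ is nonlinear in $|I|$ and does not turn into metric length.
\item The factor $(d/d_*)^{3m}$ is asserted, not derived. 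In the paper it comes from $b_0+c_0\le(d-d_*)d^m$, $\la_0\le d^{-m}$ and $\mu_0\ge(d_*/d)^{m+1}$.
\end{itemize}

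You also cannot simply import the paper's fix. The paper supplies this step with Lemma~\ref{l:Hmes-Scn}, which claims $\Hmes^{\delta_*}([\al;\be]\cap G')\le C\,\lf_{-k}[\al;\be]$ uniformly in $k$ and in the arc. That cannot hold as stated: for fixed $k$ it would give $\Hmes^{\delta_*}|_{G'}\le C\lf_{-k}$ as measures, whereas $\lf_{-k}(G')=0<\Hmes^{\delta_*}(G')$.

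Concretely, consider a closed curve that runs along an equipotential in $A_{-1}$ over the level $-k$ connectors. At each complementary void it dips along the boundary rays to just below their merge point. Its $\qf$-length is at most $(d_*/d)^{k-1}+O(d_*^k\eps/\mu_0)$. For $d^k\approx1/\eps$ this is $O(\eps^{1-\delta_*})$, so the uniform length bound of Lemma~\ref{l:len-qf} fails for small $\eps$.

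Your gap is therefore real, and this line of argument cannot close it. Only your final step, inserting $\Hmes^{\delta_*}(G')\ge d_*^{-2}$, is unproblematic.
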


Note that a stretching deformation applied to $f$ changes $\eps$ (multiplies it by the stretching factor)
but does not affect the parameters $\underline{C}$, $\ol C$ of the inequalities
 $\underline{C}\eps\le \mod(U^*\sm K^*)\le \ol C\eps$.
It looks interesting and relevant that the upper bound involves the Hausdorff measure $\Hmes^{\delta_*}(G')$;
 this Hausdorff measure is directly related with the extremal length of curves in $U^*$ looping once around $K^*$.
The Hausdorff measure is also relevant for the local version of the upper bound
 from Theorem \ref{t:main-an}, namely, when, instead of $U^*\sm K^*$, one
 considers a topological quadrilateral cut from the annulus $U^*\sm K^*$ by
 a pair of rays to $K^*$; see Lemma \ref{l:len-qf}.

The lower bound on $\mod(U^*\sm K^*)$ confirms that some noteworthy phenomena may take place.
Namely, it is possible that $\eps\to 0$ while, simultaneously, $m\to \infty$ so that potentially
$\mod(U^*\sm K^*)\to\infty$, say, in some one-parameter family of polynomials $f_\eps$.

In the formulas of Theorem \ref{t:main-an}, some simplifications are
 made to shorten the outcome at the expense of precision;
 more precise versions follow from the techniques of this paper.
Also, the assumption $m>0$ is used in the process of simplification,
 so that the result is \emph{not} applicable to $m=0$.
Since the case $m=0$ is of special interest, we state it separately.
For example, this case realizes when all escaping critical points have the same escape rate,
 in particular, when there is only one escaping critical point.

\begin{thm}
\label{t:m=0}
If $m=0$ in the setting of Theorem $\ref{t:main-an}$, then
$$
\frac{d(d-1)\eps}{2\pi d_*^2(d_*-1)}\le \mod(U^*\sm K^*)\le \frac{d(d-1)(d-d_*)d_*^{2\delta_*+6}\eps}{2\pi(d_*-1)^2}.
$$
\end{thm}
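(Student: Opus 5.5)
The case $m=0$ will be handled by the same two-sided scheme as Theorem \ref{t:main-an}, but the estimates are redone with the exact value $m=0$, i.e.\ $\eps\le g_f(\om_+)<d\eps$, instead of the simplifications that required $m>0$.

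\textbf{Lower bound.} We pass to B\"ottcher coordinates near infinity. Since $g_f(\om_+)<d\eps$, all critical values of $f$ outside $K(f)$ have potential $<d^2\eps$. The region $\{g_f>d\eps\}$ is therefore mapped by the B\"ottcher map onto the complement of a round disk, up to the finitely many slits produced by critical points at that level. The annulus $f(U^*)\sm K^*$ contains the part of the potential shell $\{0<g_f<d\eps\}$ that winds around $K^*$. Cutting it along the rays landing at (or bounding) the holes of $G'$, we obtain a rectangle of height $d\eps$ in logarithmic coordinates and width equal to the total length of the arguments surrounding $K^*$, which is at most $1$ after the appropriate rescaling by $\si_d$. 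By the Gr\"otzsch inequality together with $\mod(f(U^*)\sm K^*)=d_*\mod(U^*\sm K^*)$, we get $\mod(U^*\sm K^*)\ge c\,\eps$. The constant $\frac{d(d-1)}{2\pi d_*^2(d_*-1)}$ should come from summing a geometric series over the nested preimages of the shell $\{\eps<g<d\eps\}$: each deeper layer contributes a factor involving $d/d_*$. The factor $d-1$ counts the sum $\sum_k d^{-k}$ of the potentials, and $d_*-1$ comes from $\sum_k d_*^{-k}$. I would fit this bookkeeping together by the parallel/series rules for moduli.

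\textbf{Upper bound.} Here we estimate extremal length from below via a test metric, which the paper indicates is done through Lemma \ref{l:len-qf}. Closed curves in $U^*\sm K^*$ separating $K^*$ from $\d U^*$ must cross every ray with argument in $G'$ between potentials $0$ and $\eps$. We take the metric $\rho=|d\log\Phi|$, the gradient of the harmonic conjugate of $g_f$, supported on the set of points whose (possibly broken) ray arguments lie in $G'$. Then every such curve has $\rho$-length at least the ``mass'' of $G'$, measured by $\Hmes^{\delta_*}$ via the natural self-similar measure on $G'$ (equivalently, a Frostman-type measure adapted to $\si_d$, which Theorem \ref{t:main-comb} bounds below by $1/d_*^2$). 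The area is at most $\eps$ times the width, with the distortion factor $(d-d_*)$ coming from the holes. This yields $\mod\le C\eps/\Hmes^{\delta_*}(G')^2$. Setting $m=0$ in the displayed bound of Theorem \ref{t:main-an} before simplifying gives $d^2 d_*^2(d-d_*)\eps/(2\pi\Hmes^2)$. We then insert $\Hmes^{\delta_*}(G')\ge d_*^{-2}$ from Theorem \ref{t:main-comb}, and replace the ingredient that used $m>0$ (the comparison of broken-ray lengths at the critical level $g_f(\om_+)$) by the cruder factor $(d-1)/(d_*-1)^2$ times $d_*^{2\delta_*}$, which comes from the upper bound of Theorem \ref{t:main-comb}.

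\textbf{Main obstacle.} I expect the main obstacle to be the upper bound's handling of broken rays at the level $g_f(\om_+)\in[\eps,d\eps)$. When $m=0$ the escaping critical point lies within one iterate of $\d U^*$, so the quadrilaterals of Lemma \ref{l:len-qf} cannot be pulled back one extra step to gain the factor $(d_*/d)^{3m}$. My first attempt would be to work directly in the shell $\{\eps/d<g<\eps\}$, where no critical points interfere. There I would apply the local estimate of Lemma \ref{l:len-qf} on each of the finitely many quadrilaterals cut by the rays bounding the big holes (length $\ge 1/d$). I would then sum using the measure bounds of Theorem \ref{t:main-comb}, accepting the loss of the constants $d_*^{2\delta_*+6}$ and $(d_*-1)^{-2}$.
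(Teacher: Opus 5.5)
There are genuine gaps in both halves, and the constants in your sketch are fitted to the target rather than derived. \textbf{Upper bound.} Your test metric, $|d\log\Phi|$ restricted to points whose (broken) ray arguments lie in $G'$, is supported on a null set. Indeed $\Hdim(G')=\delta_*<1$, so $G'$ has Lebesgue measure zero, and in flat $\ef$-coordinates this set is locally an interval times $G'$. Such a metric has zero area and gives zero length to equipotential loops, so it bounds nothing. A singular measure such as $\Hmes^{\delta_*}$ on arguments cannot be the density of a conformal metric. The missing idea is to smear it out level by level, which is what the paper's $\qf$ does. On each $A_n$, $n<0$, put $(2\pi\mu_n)^{-1}\ef$ on the rectangles $\Sc_n$, i.e. rays reaching $O^*(t_n)$, whose arguments $\Cc_n$ have positive length $\mu_n$. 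Then a curve crossing $A_n$ has $\qf$-length at least the $\lf_n$-measure of the arguments it meets (Lemma~\ref{l:qf-len}). The key input, obtained by counting level $-n$ connectors inside an arc (Lemmas~\ref{l:ncomp1} and \ref{l:Hmes-Scn}), is $\lf_n[\al;\be]\ge \Hmes^{\delta_*}(G'\cap[\al;\be])/(d_*(b_0+c_0)\la_0^{\delta_*})$ \emph{uniformly in $n$}. Meanwhile the $\qf$-area of level $n$ is $(t_{n+1}-t_n)/(2\pi\mu_n)$, which sums as a geometric series. This yields Theorem~\ref{t:upper} for every $m$, including $m=0$. So your ``main obstacle'' is not there: no analysis of broken rays at level $g_f(\om_+)$ or of a shell $\{\eps/d<g_f<\eps\}$ is needed. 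Theorem~\ref{t:main-an} uses $m>0$ only in bounding $b_0+c_0$ (Lemma~\ref{l:bn0} and \eqref{e:b0+c0}, where $(d-1)^m+1\le d^m$ fails at $m=0$). The paper's proof of the present theorem simply substitutes into Theorem~\ref{t:upper} the bounds $b_0+c_0\le c_0d_*$ (Lemma~\ref{l:bn}), $\la_0\le d_*/d$ (so $\la_0^{2\delta_*}\le d_*^{2\delta_*-2}$), $\mu_0\ge d_*/d$ and $\Hmes^{\delta_*}(G')\ge d_*^{-2}$. That is where $(d_*-1)^{-2}$ (from $c_0^2$) and $d_*^{2\delta_*+6}$ come from, not from the upper bound of Theorem~\ref{t:main-comb}.

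\textbf{Lower bound.} The B\"ottcher/rectangle picture does not work. The B\"ottcher coordinate does not extend below the level $g_f(\om_+)\ge\eps$. Cutting a shell along rays gives a quadrilateral, and its modulus bounds the annulus modulus only from above, not from below. The constant is then only guessed. The correct piece is your remark that $f:U^*\sm K^*\to f(U^*)\sm K^*$ is a $d_*$-fold covering. With Gr\"otzsch this gives $\mod(U^*\sm K^*)\ge\mod(A_0)/(d_*-1)$ (Corollary~\ref{c:Gro}), since $\mod(A_{-n})=d_*^{-n}\mod(A_0)$; that is where $d_*-1$ comes from. What is missing is a lower bound on $\mod(A_0)$. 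Cut $[t_0;t_1]$ at the finitely many levels containing singularities of $g_f$. Between consecutive such levels, $A_0$ contains an $\ef$-flat cylinder of that height with circumference at most $2\pi\mu_1$, and these cylinders are nested. Gr\"otzsch then gives $\mod(A_0)\ge (t_1-t_0)/(2\pi\mu_1)=(d-1)\eps/(2\pi\mu_1)$, so $d-1$ is $(t_1-t_0)/\eps$, not a sum $\sum_k d^{-k}$. Be warned that this bookkeeping will not land exactly on the displayed constant. For $m=0$ one has $\mu_1=1$ and $\mu_0=d_*/d$, because $f:O^*(t_0)\to O^*(t_1)$ has degree $d_*$ and multiplies $\ef$-lengths by $d$. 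The argument therefore gives $\frac{(d-1)\eps}{2\pi(d_*-1)}$. The paper's constant comes from writing $\mu_1=d_*\mu_0$ in Lemma~\ref{l:modA0}, whereas Lemma~\ref{l:mun} gives $\mu_1=(d/d_*)\mu_0$. The corrected bound implies the displayed one only when $d\le d_*^2$. The same slip ($\mu_n=d_*^n\mu_0$ instead of $(d/d_*)^n\mu_0$) occurs in Lemma~\ref{l:area-qf}. It replaces the correct area factor $\frac1{d_*-1}$ by $\frac{d_*}{d-d_*}$, which leaves the upper constant unjustified in the same regime $d>d_*^2$.
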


For example, suppose that $d_*=d-1$; then $f$ has only one escaping critical point.
The inequalities of Theorem \ref{t:m=0} reduce in this special case to
$$
\frac{d\eps}{2\pi (d-1)(d-2)}\le \mod(U^*\sm K^*)\le \frac{d(d-1)^{2\delta_*+7}\eps}{2\pi(d-2)^2}.
$$

In particular, if $d=3$ and $d_*=2$,
$$
\frac{\eps}{5}<\frac{3\eps}{4\pi}\le \mod(U^*\sm K^*)\le \frac{384\cdot 4^{\log_32}\cdot\eps}\pi<147\eps.
$$

We hope that the upper bound on $\mod(U^*\sm K^*)$,
which is the main topic of this paper,
can be extended to the case when $U^*$ is a topological disk with no escaping critical points in $U^*$,
 but $U^*$ is not necessarily a domain of a PL map and is not bounded by an equipotential.

\subsection{Extremal distance between rays}
\label{ss:ED}
Given a domain $\Omega\subset \C$ and two sets $A$, $B$ in the plane,
 define the \emph{extremal distance} $\mathrm{dist}_\Omega(A,B)$ between $A$ and $B$ with respect to $\Omega$
 as the extremal length of the collection $\Fc_\Omega(A,B)$ of all
 piecewise smooth arcs in $\Omega$ connecting $A$ with $B$ (cf. \cite{ahl66}).
More precisely, a piecewise smooth arc $\gamma\subset\Omega$ that is a homeomorphic image of $\R$ in $\Omega$
 belongs to $\Fc_\Omega(A,B)$ if $\gamma$ accumulates in $A$ and $B$ at the two ends of $\gamma$
 (if $\gamma$ is parameterized by $\R$, the latter means $\gamma(t)\to A$ as $t\to -\infty$ and $\gamma(t)\to B$
 as $t\to +\infty$).
Given a circle arc $(\al;\be)\subset\uc$ such that $\al$, $\be\in G'$,
 define the \emph{sector} $\Sigma(\al;\be)$
 as the complementary component of the union of $K^*$ with the smooth or broken rays $A$, $B$ to $K^*$
 of arguments $\al$, $\be$ that contains external rays $R(\ga)$ with $\ga\in (\al;\be)$.
Let $\mathrm{ED}(\al;\be)$ be the extremal distance between $A=R^*_f(\al)$ and $B=R^*_f(\be)$
 with respect to $U^*\cap \Sigma(\al;\be)$.
The extremal distance $\mathrm{ED}(\al;\be)$ can be estimates in terms of the
 $\delta_*$-Hausdorff measure of $G'\cap [\al;\be]$ as follows.

\begin{thm}
\label{t:ED}
For a circle arc $(\al;\be)\subset\uc$ such that $\al$, $\be\in G'$,
$$
\mathrm{ED}(\al;\be)\ge C\cdot \frac{\Hmes^{\delta_*}(G'\cap [\al;\be])^2}{\eps}
$$
 where $C$ depends on $d$, $d_*$, and $m$, but not on $\al$, $\be$, or $\eps$.
\end{thm}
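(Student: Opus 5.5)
We bound $\mathrm{ED}(\al;\be)$ from below with the standard metric criterion. For any admissible metric $\rho$ on $\Omega:=U^*\cap\Sigma(\al;\be)$,
$$
\mathrm{ED}(\al;\be)\ge \frac{\inf_{\ga\in\Fc_\Omega(A,B)} L_\rho(\ga)^2}{\area_\rho(\Omega)}.
$$
The natural metric comes from the Böttcher coordinate at infinity. On $\C\sm K(f)$ away from the escaping critical points, the function $\Phi=g_f+2\pi i\,\ta$ (potential plus external argument) is a local coordinate. We take $\rho=|d\Phi|$ restricted to points of $\Omega$ whose external argument lies in a suitable set adapted to $G'$. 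The area of $\Omega$ in this metric is at most $2\pi\cdot\eps\cdot\ell$, where $\ell$ is the total $\ta$-length used. This would give a bound like $L^2/(2\pi\eps\ell)$. But $G'$ has Lebesgue measure zero, so the metric has to be built from the Hausdorff measure rather than from Lebesgue measure in $\ta$.

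\textbf{Construction of the metric.} Let $\nu$ be the natural $\delta_*$-conformal measure on $G'$, essentially $\Hmes^{\delta_*}$ restricted to $G'$; it scales by $d_*$ under $\si_d$ on small arcs. Spread $\nu$ over equipotential levels using the self-similarity. The annulus $U^*\sm K^*$ decomposes into fundamental pieces $P_n=\{\eps/d^{n+1}<g_f\le\eps/d^n\}\cap U^*$, and $f^n$ maps $P_n$ conformally, up to bounded degree, onto $P_0$.

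\begin{itemize}
\item On $P_n$, a curve crossing from $A$ to $B$ must cross the external rays in $\Sigma(\al;\be)$. We put a density on $P_n$ comparable to $d_*^{\,n}\cdot(\text{local }\nu\text{-density per angle})$.
\item Concretely, within $P_n$ cover $G'\cap[\al;\be]$ by the holes-complement intervals of generation $n$. These are arcs $I$ of length about $d^{-n}$, each carrying $\nu(I)\asymp d_*^{-n}$. Over each such arc we set $\rho=\nu(I)/|I|$ times the Euclidean-in-$\Phi$ density on the corresponding box of $P_n$. Here $|I|$ is the arc length in $\ta$, with the box height $\eps d^{-n}$ in $g$.
\end{itemize}

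\textbf{Length and area.} Any curve from $A$ to $B$ in $\Omega$ must separate $K^*$ inside $\Sigma$. Hence at some level it crosses every box $I$ in $\ta$-direction. More precisely, projecting the curve radially, it must sweep all angles of $G'\cap[\al;\be]$ within $U^*$; this is where the broken-ray structure and the definition of $\Sigma$ enter. The $\rho$-length is then at least about $\nu(G'\cap[\al;\be])$, with constants depending on $d,d_*,m$.

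The area is at most $\sum_n\sum_I(\nu(I)/|I|)^2\cdot|I|\cdot\eps d^{-n}$. Using $|I|\asymp d^{-n}$, this is $\asymp\eps\sum_n\sum_I\nu(I)^2$. Also $\sum_I\nu(I)^2\le\max_I\nu(I)\cdot\nu([\al;\be])\lesssim d_*^{-n}\nu$, which is summable in $n$. This gives area $\lesssim\eps\,\nu(G'\cap[\al;\be])$. The resulting ratio is then $\gtrsim\nu/\eps$, which is linear in $\nu$ rather than quadratic. Since $\nu\le\Hmes^{\delta_*}(G')$ is bounded above by Theorem~\ref{t:main-comb}, we have $\nu\ge\nu^2/\Hmes^{\delta_*}(G')$, so the stated bound $C\,\Hmes^{\delta_*}(G'\cap[\al;\be])^2/\eps$ follows with $C$ absorbing the upper bound of Theorem~\ref{t:main-comb}.

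\textbf{Main obstacle.} The hard step is the length bound. A curve may go around escaping critical points and across holes, and broken rays bend at precritical points. For levels within $d^{m}$ of the critical values the geometry of $\Phi$ is distorted, which is the source of the $m$-dependence. I would handle this by Koebe distortion on the pieces $P_n$ with $n\ge m+1$, where $f^n$ is univalent onto $P_0$, and treat the finitely many top levels crudely. For the crossing claim, I would first prove that every admissible curve must pass below level $\eps d^{-n}$ within any generation-$n$ gap between consecutive covering arcs. This fails for holes, which are blocked by components of $K(f)\sm K^*$ and their rays, and that blocking is exactly why only $G'$-angles count.
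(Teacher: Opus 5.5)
\textbf{The gap: the length bound.} This step is not a technicality. A curve from $A$ to $B$ must sweep every angle of $G'\cap[\al;\be]$. Nothing forces it to cross whole boxes, or to sweep all angles at one level, so your inference ``at some level it crosses every box'' is false. Your density is constant across each box, which creates two asymmetries:
\begin{itemize}
\item Sweeping a sub-arc $\Theta$ of a generation-$n$ arc $I$ inside $P_n$ costs only $2\pi\,\nu(I)|\Theta|/|I|$, which can be far smaller than $\nu(\Theta)$.
\item Descending through $P_i$ costs only about $(\nu(I_i)/|I_i|)\,\eps d^{-i}\approx\eps\,d_*^{-i}$. This is an $\eps$-fraction of the cost of crossing $I_i$.
\end{itemize}

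These allow a cheap detour curve. Take $j$ with $d^{-j}\approx\eps$, and build the curve inside each top-level box $I$ as follows.
\begin{itemize}
\item It runs horizontally only across the generation-$j$ sub-arcs of $I$, at cost $\approx(d_*/d)^j\nu(I)$.
\item At each gap between consecutive sub-arcs, it descends along the gap's boundary ray to the band where the two boundary rays merge.
\item It crosses the wedge there for free: that wedge lies over no box of its band, so $\rho=0$ on it.
\item It then climbs back, at cost $O(\eps\,\nu(I))$ per gap; there are about $d_*^{\,j}$ gaps.
\end{itemize}
Its $\rho$-length is therefore $\approx\nu\bigl((d_*/d)^j+C\eps d_*^{\,j}\bigr)\approx\eps^{1-\delta_*}\nu$. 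So the best lower bound on $L^2/A$ obtainable from this metric falls short of what you need by a factor $\eps^{2-2\delta_*}$. Your proposed remedy, that admissible curves must pass below the merge level of each gap, is exactly what this curve does, and it is cheap. Koebe distortion plays no role here, since $|d\Phi|=\ef$ is exactly flat.

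\textbf{Comparison with the paper.} The paper's proof has the same architecture as yours:
\begin{itemize}
\item the metric $\qf=\ef/(2\pi\mu_n)$ on the level-$n$ rectangles, essentially your density;
\item the area bound $\area_\qf(U^*\sm K^*)\asymp\eps$ (Lemma~\ref{l:area-qf});
\item the length bound of Lemma~\ref{l:len-qf}, deduced from Lemma~\ref{l:qf-len} and Lemma~\ref{l:Hmes-Scn}, i.e.\ $\lf_n[\al;\be]\ge c\,\Hmes^{\delta_*}([\al;\be]\cap G')$ for every arc, with $c$ independent of $n$.
\end{itemize}

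That uniform comparison is precisely the ingredient your sketch lacks, but it cannot be imported. For fixed $n$, $\lf_n$ is a constant multiple of Lebesgue measure on $\Cc_n$. By contrast, $\Hmes^{\delta_*}$ restricted to the Lebesgue-null set $G'$ is singular. Covering $G'$ by finitely many short arcs therefore produces arcs that violate the inequality. Concretely, take $d=3$, $d_*=2$, $\A=\{(1/3;2/3)\}$, so that $G'$ is the middle-thirds Cantor set. A level $-(n+j)$ connector $\Theta$ not containing $0$ has $\Hmes^{\delta_*}(\Theta\cap G')/\lf_{-n}(\Theta)=(3/2)^j$, which is unbounded in $j$.

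The detour curve above, run around all of $K^*$, has $\qf$-length $\approx\eps^{1-\delta_*}$. So the $\eps$-independent lower bound of Lemma~\ref{l:len-qf} also fails for small $\eps$. Closing this step would need a metric that makes vertical motion near the directions of $G'$ expensive, and it is not clear that the $\eps^{-1}$ rate survives at all.
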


Making $C$ explicit is straightforward and similar to Theorems \ref{t:main-an} and \ref{t:m=0}.
For example, one can take
$$
C=\begin{cases}
    \frac{2\pi d_*^{3m-2}}{d^{3m+1} (d-1)(d-d_*)}, & \mbox{if } m>0 \\
    \frac{2\pi (d_*-1)^2}{d(d-1)d_*^{2+2\delta_*}(d-d_*)}, & \mbox{if } m=0
  \end{cases}
$$
 (see Section \ref{ss:upper}).
While the Hausdorff measure can be easily eliminated from Theorem \ref{t:main-an},
 its appearance on the right-hand side of inequalities from Theorem \ref{t:ED} is essential.

\subsection{Organization of the paper}
In Section \ref{s:cgapdata}, we recall some terminology of complex polynomial dynamics,
 specifically, the notions related to the geometry and dynamics of the basin of infinity
 (various kinds of rays, equipotentials, etc.).
We proceed with studying the relationship between $(f,K^*)$ and a certain
 invariant gap $G$, called the \emph{renormalization gap of $(f,K^*)$}.
Geometry and dynamics of $(f,K^*)$ define a sequence of approximations $\Cc_n\subset\uc$ of $G':=G\cap\uc$
 (here, $n$ ranges in $\Z$, and $\Cc_n$ converges to $G'$ as $n\to -\infty$).
Each $\Cc_n$ is a union of finitely many closed circle arcs called \emph{level $n$ connectors}.
Also, a certain tiling of $\C\sm K^*$ by nested annuli $A_n$, called the
 \emph{fundamental nest}, is described.

Section \ref{s:metr} deals with combinatorial estimates related to $G$ but not (explicitly) to $K^*$.
We start (in Section \ref{ss:holconn}) by estimating the number of connectors, their total length, as well as
 the length of the longest connector of any given level.
Next, a map $\psi$ is studied that collapses all holes of $G'$ to points and semi-conjugates $\si_{d}|_{G'}$ with $\si_{d_*}$.
H\"older continuity of $\psi$ implies a lower bound on the Hausdorff measure of $G'$
 while an upper bound can be obtained by definition, using the results of Section \ref{ss:holconn}.
Theorem \ref{t:HdimG} proved in Section \ref{s:Hol} implies Theorem \ref{t:main-comb}.
As $G'$ is approximated by $\Cc_n$, the Hausdorff measure on $G'$, extended by zero to all of $\uc$,
 is approximated with measures $\lf_n$ supported on $\Cc_n$; the corresponding convergence
 results are established in Section \ref{ss:meas}.
Lemma \ref{l:Hmes-Scn} is a key geometric fact about $G$ that is used
 when estimating $\mod(U^*\sm K^*)$; it relates the Hausdorff measure of $[\al;\be]\cap G'$
 with the $\lf_n$-measure of $[\al;\be]$; in particular, it implies a lower bound on $\lf_n[\al;\be]$
 that is independent of $n$.

Section \ref{s:geom-est} contains lower and upper bounds on $\mod(U^*\sm K^*)$.
While the lower bound (Section \ref{ss:lowb}) 
follows from known techniques
 (Gr\"otzsch inequality), the upper bound is based on extremal length estimates for
  a specific, carefully chosen, conformal metric on $U^*\sm K^*$.
This conformal metric $\qf$ is introduced in Section \ref{ss:pf}, after a general discussion
  of conformal metrics (Section \ref{ss:metrA}) and a geometric definition of the set
  on which $\qf$ is supported (Section \ref{ss:rect}).
Section \ref{ss:len-qf} contains length and area estimates with respect to the metric $\qf$,
 which yield the main results of this paper (proved in Section \ref{ss:upper}).

 \subsection{Acknowledgments}
 The authors would like to thank G. Levin for numerous fruitful discussions and useful suggestions.

\section{Renormalization gap}
\label{s:cgapdata}
Consider a monic centered complex degree $d$ polynomial $f$ with disconnected Julia set;
 $f$ being \emph{monic centered} means that $f(z)=z^d+a_{d-2}z^{d-2}+\dots +a_0$ for some $a_{d-2}$, $\dots$, $a_0\in\C$.
The \emph{Green function} for $f$ is defined as
$$
g_f(z):=\lim_{n\to\infty} \frac{\log_+ |f^n(z)|}{d^n},\quad\hbox{where}\quad \log_+(z):=\max(0,\log |z|).
$$
As is well known, the basin of infinity with respect to $f$ is the set of all points $z$ with $g_f(z)>0$,
 and $g_f$ is harmonic on this basin.
Critical points of $g_f$ in the basin of infinity are precisely eventual preimages of escaping critical points of $f$;
 we call these points \emph{singularities of $g_f$} in order not to overload the term ``critical point''.

We refer the reader to \cite{Mil00,DM08,PZ22} for general discussion of dynamical Green functions and,
 more generally, complex polynomial dynamics in the disconnected case.

\subsection{External rays and equipotentials}
\label{ss:exteq}
By an \emph{external interval}, we mean a complete flow line (=trajectory) of the vector field $-\nabla g_f$,
where the gradient is with respect to the usual Euclidean metric $|dz|$ on $\C$. More precisely,
``complete'' here means extended to all times (gradient flow lines come with parameterization,
and complete flow lines are parameterized by all of $\R$). Thus, a complete flow line may be asymptotic
to $\infty$, to $K(f)$, or to a singularity of the potential.
An external interval either extends all the way from infinity to $K(f)$, in which case it is called a \emph{smooth external ray}, or is asymptotic to a singularity of $g_f$
in one or both directions.
Singularities of the gradient vector field $\nabla g_f$ coincide with the singularities of $g_f$.
An \emph{external ray} is an unbounded external interval; it is either a smooth
external ray or a trajectory of $-\nabla g_f$ starting at infinity that is asymptotic to a singularity.
In the latter case, we may say that an external ray \emph{hits} the singularity,
or \emph{crashes into} the singularity although, formally speaking, the corresponding
trajectory never reaches it in finite time.
External rays are uniquely determined by their arguments;
 we will write $R_f(\ta)$ (sometimes omitting the $f$ from the notation)
 for the external ray of $f$ of argument $\ta$.

\begin{dfn}[External arguments]
By definition, any point of $R_f(\ta)$ has \emph{external argument} $\ta$.
If $z\in\C\sm K(f)$ is not in an external ray, then $z$ is approximated by points of $R_f(\ta_n)$,
 for a suitable sequence $\ta_n\in\uc$.
Any partial limit of $\ta_n$ is called an \emph{external argument} of $z$.
Write $\Arg(z)$ for the set of all external arguments of $z$;
 this set may consist of more than one angle.
Given any set $Z\subset \C\sm K(f)$, let $\Arg(Z)$ be the union of the sets $\Arg(z)$ over all $z\in Z$.
\end{dfn}

If, say, exactly two external rays $R(\al)$, $R(\be)$ hit a singularity $z$ of $g_f$,
 then $\Arg(z)=\{\al,\be\}$.
Also, for any subset $Z$ of a bounded external interval incident to $z$, we have $\Arg(Z)=\{\al,\be\}$.

\begin{dfn}[Paths and broken rays to $K^*$]
By a \emph{simple path} in an open set $W\subset \C$ we mean the image of $\R$ under an 
 embedding $\gamma:\R\to W$ such that
$\lim_{t\to -\infty} \gamma(t)=\infty$.
If $\Gamma$ is a simple path and $\ol{\Gamma}\sm \Gamma\subset Y$, then we say that $\Gamma$ is a \emph{path to $Y$}.
By a \emph{path to $K^*$} we mean a simple path to $K^*$ in $\C\sm K(f)$.
External rays that are paths to $K^*$ are called \emph{external rays to $K^*$}.
Also, a simple path $R$ from infinity to $K^*$ is called a \emph{broken ray to $K^*$} (cf. \cite{BCLOS16}) if
 $R$ consists of at most countably many external intervals and singularities of $g_f$.
Define the \emph{external argument} of $R$ as the only angle $\ta\in\R/\Z$ such that $R_f(\ta)\subset R$
(it is well-known that the external argument of a broken ray is well-defined).
\end{dfn}

\emph{Equipotential curves}, or \emph{equipotentials}, are defined as level curves of
 the Green function $g_f$ for $f$.
Connected components of equipotentials are called \emph{equipotential ovals}
 if they are Jordan curves.
See Fig. \ref{fig:rays} for an illustration of external intervals and an equipotential curve
 (the boundary of the entire figure)
in the case of a cubic polynomial $f$ with a quadratic-like PL restriction.
Also, recall that the \emph{topological hull} $\th(X)$ of a compactum $X$ in $\C$ is defined as
the complement of the unbounded component of $\C\sm X$.

\begin{figure}
  \centering
  \includegraphics[width=.8\textwidth]{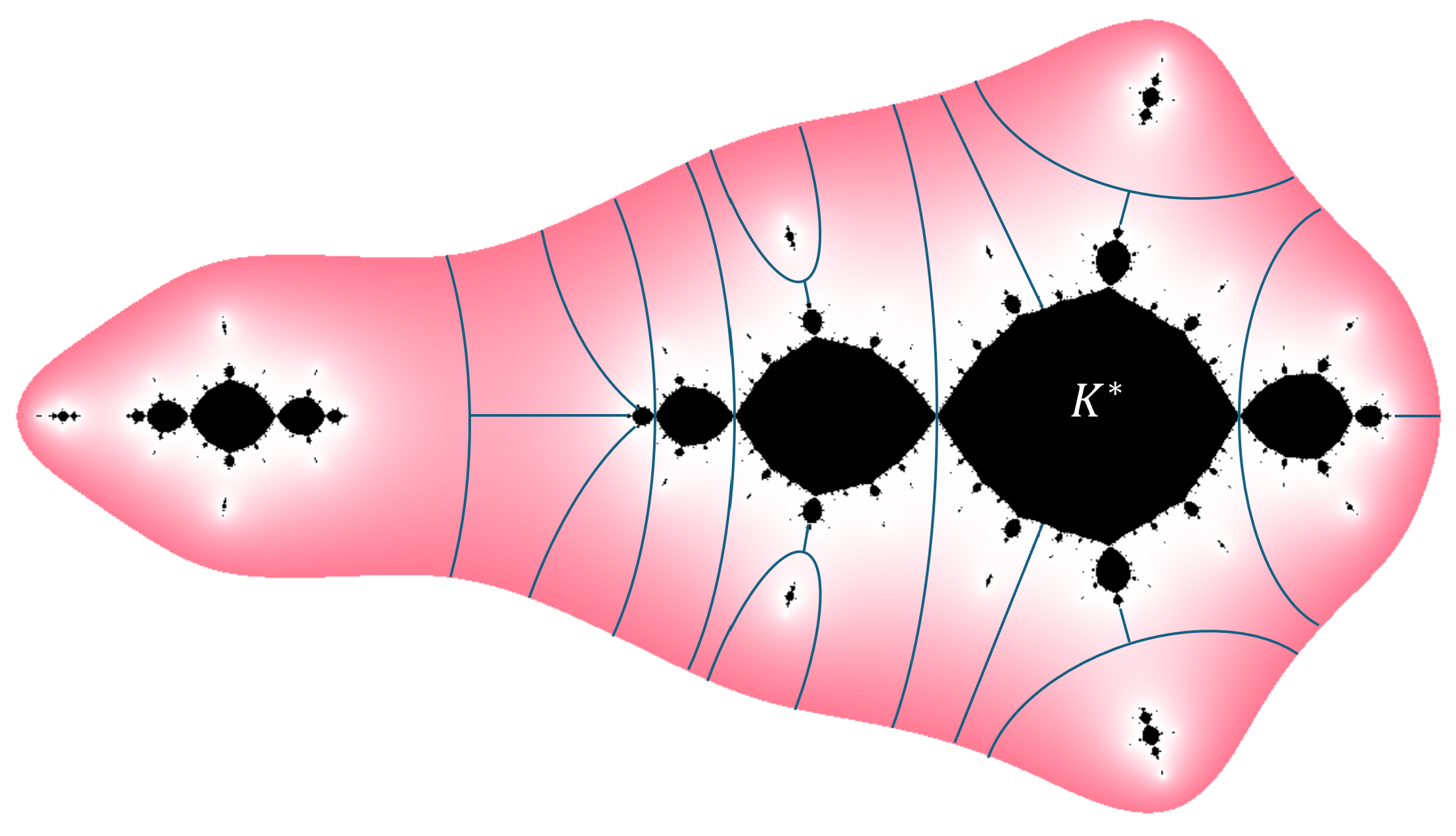}
  \caption{An equipotential disk for a cubic polynomial $f$ with a PL restriction $f:U^*\to f(U^*)$
   that is hybrid equivalent to $z\mapsto z^2-1$.
   (Parts of) some external intervals of $f$ are shown; all these are parts of
   broken rays to $K^*$.}\label{fig:rays}
\end{figure}

\begin{dfn}\label{d:curves}
For each $t$, let $E(t)$ be the equipotential of $f$ of level $t$;
 note that $E(t)$ may have a few connected components and be non-smooth.
Let $O(t)$ be the component of $E(t)$ separating $K^*$ from infinity, and
$O^*(t)\subset O(t)$ be the Jordan curve containing $K^*$ in its topological hull ($O^*(t)$
is well defined and may be a proper subset of $O(t)$).
\end{dfn}

Note that $f(E(t))=E(d\cdot t)$, as follows immediately from the properties of the Green function.
By definition, we also have that $f(O(t))=O(d\cdot t)$ and $f(O^*(t))=O^*(d\cdot t)$.
Let $t>0$ be a level of $g_f$ such that $O(t)$ contains singularities of $g_f$.
Compact disks bounded by $\ol{O(t)\sm O^*(t)}$ are called \emph{bubbles of level} $t$.

\begin{lem}
  \label{l:foam-cont}
  The topological hull of $O(t)$ is contractible.
It follows that bubbles are arranged in a tree-like fashion: any level $t$ bubble is connected to $O^*(t)$
 through a unique sequence of bubbles so that adjacent bubbles in the sequence have one
 boundary point in common.
\end{lem}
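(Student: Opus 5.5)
The plan is to prove that the topological hull $\th(O(t))$ is simply connected, using the maximum principle for $g_f$, and then read off the tree structure from simple connectivity. Write $\th(O(t))=\C\sm W$, where $W$ is the unbounded component of $\C\sm O(t)$. The first step is to identify $\th(O(t))$ as a component of a sublevel set. Since $O(t)$ separates $K^*$ from infinity and $g_f\equiv t$ on $O(t)$, I claim that $\th(O(t))$ equals the closure of the component $D$ of $\{g_f<t\}$ that contains $K^*$, together with all bounded components of $\C\sm O(t)$. On each bounded complementary component $V$ of $O(t)$ we have $g_f=t$ on $\d V$. By the maximum principle for the subharmonic function $g_f$, this gives $g_f\le t$ on $V$. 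Hence every bounded component of $\C\sm O(t)$ lies in $\{g_f\le t\}$.

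The second step is to show that $\th(O(t))$ is a full compactum, i.e. it has connected complement, and that it is connected. Connectedness is clear, because $O(t)$ is connected and we only add bounded complementary components, each of which has boundary in $O(t)$. The complement is $W$, which is connected by definition, so $\th(O(t))$ is a full continuum. By the standard fact that a full continuum in the plane has contractible interior components and is cellular, $\th(O(t))$ is the intersection of a nested sequence of closed disks. The contractibility claimed in the lemma then follows in the sense of shape, or honestly if $\th(O(t))$ is locally nice. Here it is locally nice: $O(t)$ is a finite union of real-analytic arcs meeting at finitely many singularities of $g_f$, since $g_f$ is harmonic with finitely many critical points on the compact level set. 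So $\th(O(t))$ is a compact planar polyhedron-like set with connected complement and connected interior pieces. Its interior components are Jordan domains whose boundaries are Jordan curves in $O(t)$. A finite planar 2-complex with connected complement has trivial $H_1$, hence it is contractible.

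The third step deduces the tree structure. The closures of the interior components of $\th(O(t))$ are exactly the closed disk bounded by $O^*(t)$ and the bubbles. The nerve graph has a vertex for each disk and an edge whenever two disks share a boundary point. Near a singularity $z$ of $g_f$, the level set looks like $2k$ analytic arcs emanating from $z$, alternating between regions where $g_f>t$ and regions where $g_f<t$. So disks meet only at such points, and pairwise only at single points; if two disks shared two points, the union of their boundaries would enclose a region of $\{g_f>t\}$ bounded by $O(t)$, which the maximum principle forbids. A cycle in the nerve graph would likewise produce a closed curve in $\th(O(t))$ enclosing a bounded component of $\{g_f>t\}$. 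On the closure of that component $g_f\ge t$ with boundary values $t$, which contradicts the maximum principle unless $g_f\equiv t$. It also contradicts contractibility. Therefore the nerve is a tree, and each bubble is joined to $O^*(t)$ by a unique chain of bubbles, consecutive ones sharing exactly one point.

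The main obstacle I expect is the local analysis at singularities that justifies the claims of the third step. Specifically, I need that distinct disks meet only at singular points and that the structure is finite. I would handle this with the local normal form $g_f=t+\mathrm{Re}(z^{k})$ in a holomorphic coordinate near a singularity of order $k-1$.
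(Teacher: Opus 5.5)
Your proof is correct in substance, but it takes a different route from the paper. The paper gets contractibility in one line from the gradient flow. It picks $t'>t$ with $O(t')$ smooth and no singularities of $g_f$ between $O(t)$ and $O(t')$. Then the closed disk $\th(O(t'))$ deformation retracts onto $\th(O(t))$ along external intervals, and the tree structure is simply said to follow.

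You instead get contractibility from planar topology: the finite-graph structure of $O(t)$ from the local normal form, the connected complement, and Alexander duality. That part is purely topological and shows the first claim holds for the hull of any connected finite planar graph. You then use potential theory (maximum principle on bounded faces and on bounded components of $\{g_f>t\}$, plus the alternation of sectors at singular points) for exactly what the paper leaves implicit. First, a regular point of $O(t)$ borders only one bounded face, so disks cannot share arcs; contractibility alone does not exclude this, as the hull of a theta graph shows. Second, two disks meet in at most one singular point. So your version actually justifies the ``it follows'' in the lemma, while the paper's argument is shorter and more dynamical for contractibility itself.

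Two things to tighten.

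\emph{Contractibility.} ``Trivial $H_1$, hence contractible'' is not a valid implication for general $2$-complexes. You need the planar input that a compact planar polyhedron is homotopy equivalent to a compact planar surface, so it has free $\pi_1$. Alternatively, deduce contractibility at the end from the tree-of-disks structure by collapsing leaf disks.

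\emph{The nerve graph.} Your nerve graph (an edge whenever two disks share a point) need not be a tree. Suppose $\partial g_f$ vanishes to order $k-1\ge 2$ at a singularity. Then there are $k\ge 3$ sectors of $\{g_f<t\}$ there, each in a different disk, so $k$ disks meet at one point and the nerve contains a $k$-clique. This really occurs: if the only escaping critical point is a double one, then $\th(O(\eps))$ consists of three closed disks pinched at that point. Your cycle argument breaks down there, because the loop you build through a single junction point is not simple. The right object is the bipartite incidence graph of disks and junction points. Your maximum principle argument does show that this graph is a tree. Accordingly, ``unique sequence'' in the lemma should be read as unique minimal chain; the paper's wording has the same looseness.
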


\begin{proof}
Pick $t'>t$ such that $O(t')$ is smooth, and the open annulus between $O(t)$ and $O(t')$
 contains no singularities of $g_f$.
Note that the topological hull $\th(O(t))$ is a deformation retract of the
 topological disk $\th(O(t'))$ bounded by the curve $O(t')$.
Indeed, a homotopy from the identity map of $\th(O(t'))$ to a retraction of $\th(O(t'))$ onto $\th(O(t))$
 can be made along external intervals.
Since the disk $\th(O(t'))$ is contractible, so is $\th(O(t))$.
The last claim of the lemma follows.
\end{proof}

Nontrivial trees of bubbles are possible when several singularities of $g_f$ appear on the same equipotential.
If $f$ has sufficiently many escaping critical points, then one can arrange
 for $O(t)$ to consist of many bubbles with an arbitrary tree combinatorics.
An alternative description of $O(t)$ is that of a pinched unit circle, where the pinching
 is performed along several chords that do not cross each other in the open unit disk.
Such collections of chords are known as \emph{finite geodesic laminations}.
Even though this language will not be used in the current paper,
 it may be worth noting that there is a $\si_d$-invariant geodesic lamination
 that encodes the pinching of $O(t)$, for all values of $t>0$ simultaneously.

\subsection{Invariant gaps}
We now turn from the analytic world to the combinatorial world.
\emph{Gaps} are defined as certain convex subsets of the closed disk
 $\ol\disk=\{z\in\C\mid |z|\le 1\}$.
They first appeared in the context of invariant geodesic laminations \cite{thu85}.
Under various guises, invariant gaps have been studied in \cite{GM93,IK12,PZ22}.

\begin{dfn}[Gaps]
For a compact set $X\subset \uc$ of more than 2 points and no interior, the convex hull $\ch(X)$ is called a \emph{gap}
(here and in what follows convexity is understood in terms of real plane geometry).
If $\ch(X)$ is denoted by $G$, the set $X$ is denoted by $G'$ and is called the \emph{base} of $G$.
Components of $\uc\sm X$  are called \emph{holes} of $X$ (or of $\ch(X)$).
Holes can be written as $(a;b)$, for some $a$, $b\in\uc$, the \emph{endpoints} of the hole,
 so that the motion from $a$ to $b$ within $(a;b)$ is in the positive (counterclockwise) direction.
For a hole $U$ of $X$, the edge of $\ch(X)$ that
 separates the interior of $\ch(X)$ from $U$ is said to be
\emph{associated} with $U$.
\end{dfn}

Also, recall that the \emph{$d$-tupling} map $\si_d$ of $\uc$ to itself
is written as $\si_d(x)=dx\ (\mod\, 1)$ in the additive notation (that is, viewing $x$ as an element of $\R/\Z$)
and as $\si_d(z)=z^d$ in the multiplicative notation (that is, viewing $z$ as a point in $\uc$).

\begin{dfn}[Invariant gaps]
Suppose that $G'\subset \uc$ is a compact subset such that $\si_d(G')=G'$ and,
for every hole $(\al;\be)$ of $G'$, either $\si_d(\al)=\si_d(\be)$, or the arc $(\si_d(\al); \si_d(\be))$
is also a hole of $G'$. Then $G=\ch(G')$ is said to be a \emph{(stand alone) invariant} gap.
\end{dfn}

An invariant gap $G$ is called \emph{finite} or \emph{infinite} depending on whether $G'$ is finite or infinite.
In this paper, we will mostly consider infinite (in fact, uncountable) gaps.
Define an equivalence relation $\sim_G$ as the minimal closed equivalence relation on $\uc$ such that
$x\sim_G y$ whenever $x$ and $y$ are in
 the same hole of $G$.
Every class of $\sim_G$ is therefore an at most countable connected union of holes, their endpoints and their limit points.
Alternatively, let $G'_p$ be the \emph{perfect part} of $G$, that is, the maximal perfect subset of $G'$.
Classes of $\sim_G$ can be described as the closures of the holes of $G'_p$. 
In particular, if $G'$ is countable then there is only one class of $\sim_G$ and the associated quotient space
 of the unit circle is a point.
On the other hand, if $G'$ is uncountable, then $G'_p$ is also uncountable, and the associated quotient space is a topological circle.
The following theorem is well-known.

\begin{thm}
  \label{t:desc-gap}
If $G'_p$ is non-degenerate, then there is an identification of $\uc/\sim_G$ and $\uc$ under which
the quotient map of $\uc$ to $\uc/\sim_G$ semi-conjugates
$\si_d|_{G'}$ to $\si_{d_*}$ for some $d_*<d$.
\end{thm}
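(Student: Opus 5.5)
The plan is to build the semiconjugacy by hand. We collapse every closed class of $\sim_G$ to a point, check that the quotient is a circle carrying a covering map induced by $\si_d$, and identify that covering with some $\si_{d_*}$ by computing its degree.

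\textbf{Step 1: the quotient is a circle.} Since $G'_p$ is non-degenerate, it is uncountable. The classes of $\sim_G$ are the closures of the holes of $G'_p$, together with the singletons of $G'_p$ that are not endpoints of holes. These closed arcs are pairwise disjoint and none of them is the whole circle. The relation is closed, and its classes are connected, proper, non-separating closed subsets of $\uc$ (points or closed arcs). By Moore-type reasoning for the circle, $\uc/\sim_G$ is then a topological circle. Concretely, one can build a monotone map $\pi:\uc\to\uc$ via a non-atomic measure with support $G'_p$: take $\pi(x)$ to be the normalized measure of the arc from a base point to $x$. Such a $\pi$ is constant exactly on the classes.

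\textbf{Step 2: $\si_d$ descends.} The key point is that $\si_d$ maps classes into classes. If $(\al;\be)$ is a hole of $G'$, invariance gives one of two cases.
\begin{itemize}
\item[(a)] $\si_d(\al)=\si_d(\be)$. The closed arc $[\al;\be]$ is mapped onto the whole circle, but the points $\al$, $\be$ go to a single point.
\item[(b)] $\si_d(\al)\neq\si_d(\be)$. Then $(\si_d(\al);\si_d(\be))$ is again a hole, and since $\si_d$ is monotone on arcs of length $<1/d$, the hole is mapped onto that hole.
\end{itemize}
In case (a) we must not use the image of the interior of the hole. We only need a map on the quotient, which is determined by $\si_d|_{G'_p}$. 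So we define $F(\pi(x))=\pi(\si_d(x))$ for $x\in G'_p$. Well-definedness requires that points of $G'_p$ in one class (endpoints of a hole of $G'_p$, which is a union of countably many holes of $G'$ plus countable points) map into one class. This follows because the images of the constituent holes and points of that class are again holes or points of $G'$ that are contiguous, or they collapse. Here I would argue via the hole-mapping property and continuity, using the fact that $\si_d(G'\sm G'_p)\subset G'\sm G'_p$ by countability and $\si_d(G'_p)=G'_p$. Continuity of $F$ follows from continuity of $\si_d$ and closedness of $\sim_G$.

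\textbf{Step 3: $F$ is a covering of degree $d_*<d$.} $F$ is locally monotone. On a small arc, $\si_d$ is an orientation-preserving homeomorphism that respects gaps, so $F$ is a positively oriented local homeomorphism of the circle, and hence a covering of some degree $d_*\ge 1$. Its degree is computed by lifting: as $x$ traverses $\uc$ once, $\si_d(x)$ winds $d$ times. Each hole of type (a) of total length $\ell$ contributes $d\ell$ full turns that are collapsed in the quotient. Hence $d_*=d-\sum d\,\ell_i$, the sum being over holes of $G'$ of type (a), and $d_*<d$ provided such a hole exists. A hole of type (a) must exist. Otherwise every hole maps to a hole with length multiplied by $d$. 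The longest hole would then map to a longer one, which is a contradiction unless the longest hole has length $\ge 1/d$, and every hole of length $\ge1/d$ is of type (a) or overlaps itself. So some hole has length $\ge 1/d$ and is of type (a), with length a multiple of $1/d$.

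\textbf{Step 4: conjugate to $\si_{d_*}$.} If $d_*\ge2$, then $F$ is expanding in the sense that no nondegenerate arc stays short under iteration, because $\si_d$ expands lengths of $G'_p$-arcs. It is then topologically conjugate to $\si_{d_*}$ by the standard result that a degree-$d_*$ covering of the circle without wandering intervals or periodic arcs is conjugate to $z^{d_*}$. Absorbing the conjugacy into the identification gives the semiconjugacy. The case $d_*=1$ would also be conjugate to a rotation (or the identity) in the degenerate sense; to get $d_*>1$ one uses that $G'_p$ is uncountable and invariant.

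\textbf{Main obstacle.} I expect the main difficulty to be Step 2's well-definedness and Step 4's exclusion of wandering intervals in the quotient. For the latter I would argue that a wandering quotient arc corresponds to an arc in $\uc$ meeting $G'_p$ in infinitely many points whose $\si_d$-images stay of bounded length. This contradicts expansion by $d$, since the images can only be cut by type-(a) collapses finitely often.
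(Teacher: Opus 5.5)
The paper gives no proof of this statement; it is quoted as well known. Your architecture is the standard one: collapse the classes, push $\sigma_d$ down to a covering $F$ of the quotient circle, compute its degree, exclude homtervals, and invoke the classical fact that a degree $k\ge 2$ circle covering without homtervals is conjugate to $\sigma_k$.

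\textbf{Step 3 is wrong as written.} A hole of type (b) need not be shorter than $1/d$. A non-critical hole $H$ with $k/d<|H|<(k+1)/d$, $k\ge 1$, is sent onto its image hole ``the short way'', so it also loses $k$ full turns in the quotient. The correct count is $d_*=d-\sum_H\lfloor d|H|\rfloor$ over \emph{all} major holes, and a critical hole need not exist.

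For example, take $d=3$ and $G'=\{x:\sigma_3^n(x)\in[\frac12;1]\ \text{for all } n\ge 0\}$. The only major hole is $(0;\frac12)$, whose endpoints are fixed; this is the case the paper mentions right after defining hole types. There are no critical holes, and $d_*=2$. Your formula gives $d_*=3$. Your existence argument for a type (a) hole fails in two places: it wrongly asserts that non-critical holes always map with length multiplied by $d$, and it silently drops the ``overlaps itself'' case, which is exactly this one.

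The repair is to replace $\sigma_d$ by the monotone map $\tilde\sigma$ defined as follows:
\begin{itemize}
\item it equals $\sigma_d$ on $G'$ and on non-major holes;
\item it is constant on critical holes;
\item it maps each non-critical major hole homeomorphically onto its image hole.
\end{itemize}
Then $\tilde\sigma$ sends each class to a connected set meeting $G'$ in a countable set. That set is not the whole circle and contains no point of the perfect set $G'_p$ in its interior, so it lies in a single class. This gives $\pi\circ\tilde\sigma=F\circ\pi$ and settles well-definedness in Step 2. It also gives $\deg F=\deg\tilde\sigma=d-\sum_H\lfloor d|H|\rfloor<d$, because your longest-hole argument does produce a major hole.

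The inclusion $\sigma_d(G'\setminus G'_p)\subset G'\setminus G'_p$ you use in Step 2 is false anyway. For $d=4$ and the gap generated by the arcs $(0;\frac14)$, $(\frac14;\frac12)$, the isolated point $\frac14$ maps to $0\in G'_p$. With the argument above you do not need it.

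\textbf{Step 4 also needs fixing.} First, $d_*>1$ does not follow from $G'_p$ being uncountable and invariant. Siegel-type gaps are invariant gaps of degree $1$; for $d=2$, take the hull of a Cantor set in a closed semicircle on which $\sigma_2$ is semiconjugate to an irrational rotation. The induced map is that rotation, not $\sigma_1$. So the statement must be read with $d_*\ge 2$, as the paper assumes right afterwards.

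Second, ``expansion'' alone does not exclude homtervals, because lifted arc lengths are not monotone. An arc containing a non-critical major hole $H$ can shrink, since $H$ is replaced by a hole of length $d|H|-\lfloor d|H|\rfloor$. Take a homterval $J$ to be a nondegenerate fibre of the canonical monotone semiconjugacy to $\sigma_{d_*}$. Its forward images lie in fibres, so either they are pairwise disjoint or the orbit becomes periodic.
\begin{itemize}
\item In the disjoint case, the lengths of the arcs $\pi^{-1}(F^nJ)$ tend to $0$. But once such an arc is shorter than $1/d$, the next one contains its $\sigma_d$-image, which is $d$ times longer. This is a contradiction.
\item In the periodic case, some $g=F^p$ maps a nondegenerate arc $P$ injectively and increasingly into itself. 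The map $g$ is not the identity on $P$, since otherwise $\sigma_d^p$ would have uncountably many fixed points. So some $g$-orbit converges monotonically, say from the left, to a $g$-fixed point $q$. The left endpoint of the class $\pi^{-1}(q)$ is then a repelling fixed point of $\sigma_d^p$ accumulated from the left by $G'_p$. Hence $g$ cannot move points of $P$ just left of $q$ towards $q$, a contradiction.
\end{itemize}

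The same endpoint bookkeeping is needed for local injectivity of $F$ in Step 3: a nondegenerate class sends its left and right endpoints to the left and right endpoints of the image class. A quotient neighbourhood corresponds to a neighbourhood of a whole class, which may be long (like $[0;\frac12]$ above). So ``$\sigma_d$ is a homeomorphism on small arcs'' does not suffice.
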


The number $d_*(G)=d_*$ associated with $G$ as in Theorem \ref{t:desc-gap} is called the \emph{degree} of $G$
and is denoted by $\deg(G)$.
We will always assume in this paper that $d_*>1$.
Also, we fix an identification of $\uc/\sim_G$ with $\uc$ (as described in Theorem \ref{t:desc-gap})
 so that $\si_{d_*}\circ\psi_G=\psi_G\circ\si_d$ on $G'$, where $\psi_G:\uc\to\uc/\sim_G$
 is the quotient projection.

\begin{dfn}[Major holes]
\emph{Lengths} in $\uc$ refer to the standard Lebesgue \emph{probability} measure;
 in particular, the total length of $\uc$ is 1 rather than $2\pi$.
The length of a subset $E\subset \uc$ is denoted by $|E|$.
\emph{Major holes} (of an invariant gap $G$) are defined as holes of length at least $\frac{1}{d}$
(equivalently, as holes $(a;b)$ such that $\si_d$ is not injective on the closed arc $[a;b]$).
The edges of $G$ associated to major holes of $G$ are called \emph{major edges of $G$}, or simply \emph{majors of $G$}.
\end{dfn}

Let us now describe types of major holes. Extend $\si_d$ over the boundary $\partial G$
of $G$ linearly and still denote the extended map by $\si_d$.
The following notions can be introduced in a more general context of laminations
 but, for our purposes, it suffices to concentrate on invariant gaps.

\begin{dfn}[Types of major holes]
A major hole $(a;b)$ (of an invariant gap $G$) is said to be \emph{critical} if $\si_d(a)=\si_d(b)$, that is,
if the length of $(a;b)$ is an integer multiple of $\frac{1}{d}$. A chord of $\uc$ whose endpoints
have equal $\si_d$-images is said to be \emph{critical} (edges of $G$ associated with
critical holes of $G$ are said to be \emph{critical}, too).
A hole is of \emph{(pre)critical type} if its edge eventually maps to a critical edge,
 of \emph{periodic type} if its associated edge is periodic, and
 of \emph{(pre)periodic type} if its edge eventually maps to a periodic edge.
\end{dfn}

As an example, observe that a hole $T$ with fixed endpoints is a major hole of periodic type.
The fact that $G$ is an invariant gap implies that edges of $G$ map onto edges of $G$ except when
the hole is critical, in which case the associated edge maps to a point of $G'$.
Holes are mapped onto holes except when a hole is major
 (see, e.g., \cite[Lemma 2.28]{BOPT20}).
Note that a major hole of $G$ can be precritical without being critical.

\begin{dfn}[Gaps generated by critical holes]
\label{d:GA}
Consider a set $\A$ of pairwise disjoint open arcs of $\uc$ such that the length of each is $\frac kd$ for some integer $k>0$.
Then $\A$ defines the set $G'_\A$ of all points $x\in\uc$ such that $\si^n(x)\notin \bigcup\A$, for all $n\in\Z_{\ge 0}$.
Here and in what follows, $\bigcup\A$ stands for the union of all arcs in $\A$.
It is not hard to see that $G_\A:=\ch(G'_\A)$ is an invariant gap whose degree is $d\cdot |\uc\sm\bigcup\A|$.
\end{dfn}

Any invariant gap can be realized as $G_\A$ for some $\A$ as above, up to a countable set.
Moreover, one can arrange that each arc of $\A$ has length exactly $\frac 1d$.

\begin{thm}
\label{t:dyn-holes}
Let $G$ be a $\si_d$-invariant gap of degree $d_*$.
Every hole of $G$ is eventually mapped to a major hole.
Every hole of $G$ is either of (pre)critical type or of (pre)periodic type.
There is a collection $\A$ of $d-d_*$ disjoint open arcs of length $d^{-1}$ each such that
$G\subset G_\A$, and the set $G'_\A\sm G'$ is at most countable.
\end{thm}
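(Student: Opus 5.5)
Every hole eventually reaches a major hole by a length-growth argument. Let $(a;b)$ be a hole of $G$. If it is not major, then $\si_d$ is injective on $[a;b]$, and by invariance $(\si_d(a);\si_d(b))$ is again a hole, of length $d\,|(a;b)|$. Lengths cannot exceed $1$, so after finitely many steps the image is a hole of length $\ge \frac1d$, i.e. a major hole.

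For the dichotomy, I will first show that $G$ has only finitely many major holes. They are disjoint arcs of length $\ge\frac1d$, so there are at most $d$ of them. Now follow the forward orbit of the edge associated with any hole.
\begin{itemize}
\item If some iterate is a critical hole, the hole is of (pre)critical type.
\item Otherwise, every image is again a hole. By the first part, the orbit visits major holes infinitely often.
\item Each non-major hole is mapped injectively to a hole $d$ times longer. So between two consecutive visits to major holes the orbit is determined by the major hole it starts from.
\item Since there are finitely many major holes, some major hole is visited twice. Its edge is then periodic, and the original hole is of (pre)periodic type.
\end{itemize}

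For the last claim, I will construct $\A$ from the major holes. Each major hole $(a;b)$ satisfies $|(a;b)|\ge\frac1d$. Write $k_j$ for the number of preimage points that $\si_d$ identifies across the $j$-th major hole, so that it "absorbs" $k_j$ sheets of the covering. More precisely, the image of $[a;b]$ wraps around the circle $k_j$ times plus a remainder.
\begin{itemize}
\item In the (pre)critical (critical) case, $|(a;b)|=k_j/d$ exactly.
\item In the periodic case, $|(a;b)|=k_j/d+r$ with $0<r<\frac1d$.
\end{itemize}
Inside each major hole I choose $k_j$ disjoint open arcs of length exactly $\frac1d$. Anchor the first at $a$ and place them consecutively, so the arcs cover all of the hole except a remainder arc of length $r$ adjacent to $b$. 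Let $\A$ be the union of these arcs over all major holes.

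Points of $G'$ never enter holes, so they never enter $\bigcup\A$; hence $G'\subset G'_\A$ and $G\subset G_\A$. The degree count is a covering-degree argument: $\si_d|_{G'}$ is semiconjugate to $\si_{d_*}$ (Theorem \ref{t:desc-gap}), so a generic point of $G'$ has $d_*$ preimages in $G'$. The remaining $d-d_*$ preimages must lie in holes, and only major holes can contain extra sheets. This gives $\sum_j k_j=d-d_*$, and hence $|\A|=d-d_*$. It is consistent with Definition \ref{d:GA}, since the degree of $G_\A$ is $d\,(1-(d-d_*)/d)=d_*$.

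The main obstacle is showing that $G'_\A\sm G'$ is countable. A point of $G'_\A\sm G'$ lies in some hole of $G'$ and never enters $\bigcup\A$. Iterating, its orbit stays inside the orbit of that hole, because non-major holes map injectively onto holes. Hence it must eventually sit in a leftover remainder arc of a major hole at every visit to a major hole. The remainder arc of length $r$ maps onto a hole again, either via the periodic edge or degenerately in the critical case where $r=0$. By the first part, the set of points that avoid $\bigcup\A$ forever inside a periodic cycle of holes is controlled by a map that expands lengths by $d$ along the cycle. Such a point must therefore be the fixed point of that expanding return map, namely an endpoint of the periodic edge (or a preimage of one). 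This leaves only countably many points, which gives the claim.
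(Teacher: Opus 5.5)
\emph{Verdict: there is a real gap in the countability step, the step you yourself flag as the main obstacle. The rest is correct.}

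Your arguments for the first two claims are correct. So is the construction of $\A$ (consecutive arcs of length $d^{-1}$ starting at the left endpoint of each major hole), which is exactly the paper's. The count $\sum_j k_j=d-d_*$ via generic preimages and the semiconjugacy is also correct, and the paper leaves it implicit. A minor point: a non-critical major hole need not be periodic, but you only use $0<r<d^{-1}$ there, so nothing breaks.

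In that step you assert that a point of $G'_\A\sm G'$ stays in the orbit of its hole and ``must sit in a leftover remainder arc at every visit to a major hole''. This is false, and it misses precisely the points that make up $G'_\A\sm G'$.
\begin{itemize}
\item \emph{The division points survive.} The arcs of $\A$ are open, so a major hole $(a;b)$ also contains the division points $a+i/d$, $1\le i\le k_j$ (excluding $b$ in the critical case). These lie neither in $\bigcup\A$ nor in the remainder arc. They map to $\si_d(a)\in G'$, so they avoid $\bigcup\A$ forever.
\item \emph{Nothing survives in the remainder arcs.} Take a cycle of holes $Y_0\to\dots\to Y_{p-1}\to Y_0$ with $Y_0=(x_0;y_0)$. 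The return map sends a subarc $(z;y_0)$ homeomorphically and expandingly onto $Y_0$. So the points staying in remainder arcs forever form $\bigcap_n(z_n;y_0)$ with $z_n\to y_0$, and this set is empty. The ``fixed point'' $y_0$ is an endpoint of the open hole, not a point of it.
\item \emph{The (pre)critical case is not handled.} There is no remainder arc at all, and your remark about $r=0$ says nothing. The survivors are pullbacks of the division points of the critical hole. Their images $\si_d(a)=\si_d(b)$ need not eventually map to an endpoint of a periodic edge; there may be no periodic edges at all, e.g.\ $d=4$, $G=G_{\{(a;a+1/2)\}}$. So the countable set you name does not contain $G'_\A\sm G'$.
\end{itemize}

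The correct statement is this. Every $x\in G'_\A$ lying in a hole must, at some visit of its hole orbit to a major hole, land exactly on a division point. This is forced at a critical hole, and forced in a periodic cycle because the nested intersection above is empty. Hence $G'_\A\sm G'\subset\bigcup_{n\ge 0}\si_d^{-n}(D)$, where $D$ is the finite set of division points, and this set is countable. This is essentially the paper's argument: for a hole $I=(x;y)$ it identifies $G'_\A\cap I$ with the endpoints of the pulled-back $\A$-arcs, a countable set accumulating only at $y$.
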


\begin{proof}
The first two claims of Theorem \ref{t:dyn-holes} are well known
(see, e.g., \cite[Lemma 2.28]{BOPT20}). To prove the third claim define the family $\A$ of open arcs of length
$d^{-1}$ as follows.
Let $(a; b)$ be a major hole of $G$.
Insert in $I$ a few points $d^{-1}$ apart starting at $a$ and stopping
once the length of the remaining arc is less that $d^{-1}$.
If we do it in all major holes, then we obtain a family $\A$ of pairwise disjoint open arcs of length $d^{-1}$.

Consider a hole $I=(x;y)$ of $G$.
To describe the set $G_\A\cap I$, we iterate $\si_d$ on $I$.
On each step, we remove points of $I$ sent to arcs from $\A$.
Consider the first moment, say, $k$, when $\si_d^k(I)$ is a major hole $T$.
If $T$ is of length $d^{-1}$ then, by our construction, $T\in \A$. Hence there are no points of
$G_\A$ in $I$. Otherwise there are several concatenated arcs from $\A$ that start at the endpoint $\si_d^k(x)$
of $T$ and extend all the way to $\si_d^k(y)$ (if $T$ is critical) or to some point $z'\in T$ such that
$|(z';\si_d^k(y))|<d^{-1}$.

Their pullbacks in $I$ are concatenated arcs of length $d^{-k}\cdot d^{-1}$ with the
endpoints that belong to $G_\A$ such that if they are ordered in the counterclockwise direction
then the first one has an endpoint $x$.
If $T$ is critical, then the endpoints of the just described arcs are the only points of $G_\A$ inside $I$.
Otherwise there exists an arc
$(z;y)$ with $|(z;y)|<d^{-k-1}$ such that $\si^k_d(z;y)=(z'; \si_d^k(y))$ and, hence,
$\si^{k+1}_d(z;y)=(\si_d^{k+1}(x);\si_d^{k+1}(y))=Y$ is, again, a hole of $G$; after that the arguments can be repeated for $Y$.

The number of times (=values of $n$) when $\si_d^n(x;y)$ is a major hole of $G$
may be either finite or infinite.
In the former case, take all $n$'s where $\si_d^n(x;y)$ is a major hole, and take
the corresponding $\si_d^n$-pullbacks in $I$ of the arcs from $\A$.
The set $G_\A\cap I$ will consist of finitely many endpoints of these pullbacks.
On the other hand, in  the latter case, there are countably many subintervals of $I$
that are eventual pullbacks of arcs from $\A$; the countable set of
their endpoints coincides with $G_\A\cap I$.
Note that this countable set accumulates only to $y$.
\end{proof}

Though the set $\A$ with properties claimed in Theorem \ref{t:dyn-holes} is not unique,
for our purposes, any $\A$ with these properties will do.

\subsection{Fundamental nest}
\label{ss:fund}
In this section, we consider a monic centered degree $d$ polynomial $f$
 such that $K^*\subset K(f)$ is a degree $d_*>1$ invariant component.
Associated with it is a $\si_d$-invariant degree $d_*$ gap defined below, in Section \ref{ss:G}.
Recall that $U^*$ is an open Jordan disk such that $\d U^*=O^*(\eps)\subset O(\eps)$ for some $\eps_f=\eps>0$
which is the value of $g_f$ on $O(\eps)$,
there is at least one critical point of $f$ in $\d U^*$,
and all critical points of $f$ inside $U^*$ belong to $K^*$.
Set $t_n:=d^n\eps$, for any $n\in\Z$.
Let $A_n$ be the open annulus between $O^*(t_{n})$ and $O^*(t_{n+1})$.
These $A_n$ are nested annuli that, together with the curves $O^*(t_n)$ for all $n\in\Z$, fill $\C\sm K^*$
and converge, as $n\to -\infty$, to the boundary of $K^*$.
We will refer to the sequence of the annuli $A_n$ as the \emph{fundamental nest},
and to the annuli $A_n$ with $n\le 0$ as \emph{fundamental annuli}.
The following fact is standard and is widely used in polynomial-like dynamics.

\begin{lem}
\label{l:mapA_n}
For every $n>0$, the map $f:A_{-n}\to A_{-n+1}$ is a covering of degree $d_*$
(thus, $\mod(A_{-n})=d_*^{-1}\mod(A_{-n+1})$).
Also, $f:A_n\to A_{n+1}$ is a $d$-fold covering, for all sufficiently large positive $n$.
\end{lem}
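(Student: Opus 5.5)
The proof has two parts: showing that $f$ maps $A_{-n}$ onto $A_{-n+1}$ properly and without critical points, and then computing the degree. For $n>0$ we have $t_{-n}<t_{-n+1}\le \eps$, so $A_{-n}$ lies inside $U^*\sm K^*$. Indeed, the curves $O^*(t)$ with $t\le\eps$ are nested in $\ol{U^*}$, since $g_f$ restricted to $U^*\sm K^*$ takes values in $(0,\eps)$.

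By the choice of $\eps$, the only critical points of $f$ in $U^*$ lie in $K^*$. Hence $g_f$ has no singularities in $U^*\sm K^*$, because a singularity would be a preimage of an escaping critical point and would force a critical point of some iterate there. The level sets $\{g_f=t\}\cap U^*$ for $0<t<\eps$ are therefore smooth Jordan curves, and each equals $O^*(t)$. Consequently $A_{-n}=\{z\in U^*: t_{-n}<g_f(z)<t_{-n+1}\}$. It is also relevant that $\d U^*$ contains a critical point, which is why the construction stops at level $\eps$.

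Next we use $g_f\circ f=d\,g_f$ and the fact that $f:U^*\to f(U^*)$ is polynomial-like of degree $d_*$, with $f(U^*)$ the disk bounded by $O^*(d\eps)=O^*(t_1)$. Then $f$ maps the set $\{z\in U^*: t_{-n}<g_f<t_{-n+1}\}$ into the set $\{w\in f(U^*): t_{-n+1}<g_f<t_{-n+2}\}$, and the latter is exactly $A_{-n+1}$.

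To see that the restriction $f:A_{-n}\to A_{-n+1}$ is proper, note that $A_{-n}$ is the full preimage of $A_{-n+1}$ under the proper map $f:U^*\to f(U^*)$. This holds because preimages in $U^*$ of points with $g_f$ in $(t_{-n+1},t_{-n+2})$ have $g_f$ in $(t_{-n},t_{-n+1})$. The restriction has no critical points, since all critical points in $U^*$ lie in $K^*$. A proper local homeomorphism between annuli is a covering, and its degree equals the degree of $f|_{U^*}$, namely $d_*$. By the standard covering formula for moduli, $\mod(A_{-n})=d_*^{-1}\mod(A_{-n+1})$.

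For the second claim, choose $n$ large enough that $t_n>g_f(\om_+)$, so that no escaping critical value or singularity of $g_f$ has potential $\ge t_n$. On $\{g_f>t_n/d\}$ the Böttcher coordinate $\phi$ is then defined, with $g_f=\log|\phi|$ and $\phi\circ f=\phi^d$. In this coordinate $A_n$ is the round annulus $\{e^{t_n}<|\zeta|<e^{t_{n+1}}\}$, so $O^*(t)=O(t)$ is a single circle there, and $f$ becomes $\zeta\mapsto\zeta^d$. This map is a $d$-fold covering onto $A_{n+1}$.

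The main point requiring care is the identification $O^*(t)=\{g_f=t\}\cap \ol{U^*}$ for $t\le\eps$, in particular that no bubbles occur at these levels. This follows from the absence of singularities of $g_f$ in $U^*\sm K^*$ together with the maximum principle.
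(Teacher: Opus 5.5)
The paper gives no proof of this lemma (it calls it standard), so your argument has to stand on its own. Your treatment of the second claim is fine, up to a small slip: $t_n>g_f(\om_+)$ only guarantees the B\"ottcher coordinate on $\{g_f>g_f(\om_+)\}$, not on $\{g_f>t_n/d\}$. That region still contains $A_n\cup A_{n+1}$, which is all you need.

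The first claim, however, rests on a false step, and it is exactly the one you flagged. It is not true that $g_f$ has no singularities in $U^*\sm K^*$. Let $c\in\d U^*$ be the critical point on the boundary. Since $\ol{U^*}\subset f(U^*)$, $c$ has $f|_{U^*}$-preimages $z\in U^*$ with $g_f(z)=\eps/d$. Because $f$ is locally conformal at $z$ and $g_f=d^{-1}g_f\circ f$, the point $z$ is a saddle of $g_f$. Your reasoning only excludes critical points of $f$ in $U^*\sm K^*$. But $z$ is a critical point of $f^2$: its orbit leaves $U^*$ (indeed $f(z)=c\notin U^*$) before hitting a critical point, and the choice of $\eps$ says nothing about that. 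In the same way, all singularities and bubbles in $A_0$ pull back under $f|_{U^*}$ into $A_{-1}$, then into $A_{-2}$, and so on. This is what produces the voids of all negative levels and the broken rays to $K^*$ through singular points inside $U^*$.

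Consequences for your argument:
\begin{itemize}
\item The sets $\{g_f=t\}\cap U^*$ are not single smooth Jordan curves.
\item The annulus $A_{-n}$ is strictly larger than $\{z\in U^*: t_{-n}<g_f<t_{-n+1}\}$: it contains closed bubbles and components of $K(f)\sm K^*$, where $g_f\le t_{-n}$.
\item Already for $n=1$, the set $\{w\in f(U^*): t_0<g_f<t_1\}$ omits the level-$\eps$ bubbles attached at $c$, so it is not $A_0$.
\end{itemize}
Your full-preimage argument only tracks the range of $g_f$. It therefore proves a covering statement about the wrong (in general non-annular) sets, and the modulus identity does not follow.

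The lemma is still true; the repair is to work with Jordan disks instead of level sets. Let $D(t)$ be the open disk bounded by $O^*(t)$. It is connected, lies in $\{g_f<t\}$ by the maximum principle, and has boundary in $\{g_f=t\}$, so it is the component of $\{g_f<t\}$ containing $K^*$. For $t\le\eps$ this gives $f(D(t))\subset D(dt)$.

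Conversely, every component of $W:=(f|_{U^*})^{-1}(D(dt))$ maps properly onto $D(dt)$, hence meets $(f|_{U^*})^{-1}(K^*)=K^*$. Since $K^*$ is connected, $W$ is a connected subset of $\{g_f<t\}$ containing $K^*$, so $W=D(t)$. Since $f$ is open, preimages of closures are closures of preimages, so $(f|_{U^*})^{-1}(\ol{D(dt)})=\ol{D(t)}$ for $t<\eps$.

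Therefore $A_{-n}=D(t_{-n+1})\sm\ol{D(t_{-n})}$ is the full $f|_{U^*}$-preimage of $A_{-n+1}$. From here your remaining steps go through unchanged: there are no critical points in $A_{-n}\subset U^*\sm K^*$, a proper local homeomorphism is a covering, its degree is $d_*$, and the modulus formula follows.
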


A lower bound on $\mod(U^*\sm K^*)$ follows from the Gr\"{o}tzsch inequality:
$$
\mod(U^*\sm K^*)\ge \sum_{n> 0}\mod(A_{-n}).
$$
This, together with Lemma \ref{l:mapA_n}, yields the following corollary.

\begin{cor}
  \label{c:Gro}
One has $\mod(U^*\sm K^*)\ge \frac{1}{d_*-1}\mod(A_{0})$.
\end{cor}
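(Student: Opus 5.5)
The plan is to combine the Gr\"otzsch inequality stated just above with the modulus relation from Lemma \ref{l:mapA_n}. The annuli $A_{-n}$, $n>0$, are pairwise disjoint. Each of them is nested in $U^*\sm K^*$ and separates $K^*$ from $\d U^*=O^*(t_0)$. Together with the curves $O^*(t_{-n})$, $n\ge 1$, they fill $U^*\sm K^*$. Hence the Gr\"otzsch inequality gives
$$
\mod(U^*\sm K^*)\ge \sum_{n>0}\mod(A_{-n}).
$$

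Next I compute the terms. For every $n>0$, Lemma \ref{l:mapA_n} says that $f:A_{-n}\to A_{-n+1}$ is an unbranched covering of degree $d_*$. A degree $d_*$ covering between annuli divides the modulus by $d_*$, so $\mod(A_{-n})=d_*^{-1}\mod(A_{-n+1})$. This is where the hypothesis enters: all critical points of $f$ inside $U^*$ lie in $K^*$, so $A_{-n}\subset U^*\sm K^*$ contains no critical points. Inducting from $n=1$, we get $\mod(A_{-n})=d_*^{-n}\mod(A_0)$.

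Summing the geometric series, valid since $d_*>1$,
$$
\sum_{n\ge 1} d_*^{-n}\mod(A_0)=\frac{d_*^{-1}}{1-d_*^{-1}}\mod(A_0)=\frac{1}{d_*-1}\mod(A_0).
$$
This gives the claimed bound.

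The only point needing care is the first step: the $A_{-n}$ must really be disjoint, essential, round-type annuli in $U^*\sm K^*$. They need to be genuine annuli bounded by the Jordan curves $O^*(t_{-n})$. This holds because for negative indices there are no singularities of $g_f$ in $U^*$, so these equipotential components are smooth Jordan curves nested around $K^*$. I expect no real obstacle beyond this.
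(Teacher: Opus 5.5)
Your main argument is the paper's: apply Gr\"otzsch to the annuli $A_{-n}$, $n>0$; use Lemma \ref{l:mapA_n} to get $\mod(A_{-n})=d_*^{-n}\mod(A_0)$; then sum the geometric series. That part is correct.

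The justification in your last paragraph is wrong, however. $U^*\sm K^*$ does contain singularities of $g_f$. The critical point $\om\in\d U^*$ lies in $f(U^*)\sm K^*$, because $\ol{U^*}\subset f(U^*)$. So $\om$ has $d_*$ preimages in $U^*\sm K^*$. These preimages, and all their further pullbacks, are singularities of $g_f$ at levels $t_{-1},t_{-2},\dots$. Consequently $O(t_{-n})$ can carry bubbles and $O^*(t_{-n})$ can have corners; this is why broken rays appear inside $U^*$ in Fig.~\ref{fig:rays}. You are conflating critical points of $f$, which are indeed absent from $U^*\sm K^*$, with singularities of $g_f$ (iterated preimages of escaping critical points), which are present.

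Nothing in the argument actually needs smoothness.
\begin{itemize}
\item $O^*(t)$ is a Jordan curve by definition.
\item The maximum principle for $g_f$ places $O^*(t_{-n})$ inside the disk bounded by $O^*(t_{-n+1})$. So the $A_{-n}$ are disjoint annuli separating $K^*$ from $\d U^*$, each containing the bubbles attached to its inner boundary. This is all Gr\"otzsch requires.
\item The covering statement of Lemma \ref{l:mapA_n} uses only two facts: properness of $f:U^*\to f(U^*)$, and the absence of critical points of $f$ in $U^*\sm K^*$.
\end{itemize}
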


Thus, to obtain an explicit lower bound on $\mod(U^*\sm K^*)$ it remains to estimate $\mod(A_{0})$ from below.

\begin{dfn}[Connectors and voids]
Let $m$ be the smallest integer such that $O(t)$ is smooth for all $t\ge t_{m+1}$.
Identify $O(t_{m+1})$ with $\uc$ by means of external arguments.
Define the compact subset $\Cc_n\subset \uc$ as the intersection of $O(t_{m+1})$
with the union of all external rays or broken rays that cross or hit the oval $O^*(t_n)$.
Components of $\Cc_n$ are called \emph{level $n$ connectors} and components of  $\uc\sm \Cc_n$ are called
\emph{level $\ge n$ voids} (of $(f,K^*)$).
If $V$ is a level $\ge n$ void but not a level $\ge n+1$ void, say that $V$ is a \emph{level $n$} void.
\end{dfn}

When studying connectors and voids, the following concept is useful.
For every external angle $\ta\in\R/\Z$ there is a path $R^*(\ta)=R^*_f(\ta)$ that consists of
 (parts of) external intervals and singularities of $f$
 and that extends either all the way to $K^*$ or to $O(t)$ with the smallest possible value of $t>0$.
In other words, one builds $R^*(\ta)$ starting from an initial segment of $R(\ta)$ and descending with respect to $g_f$;
 as $R^*(\ta)$ meets a singular equipotential component $O(t)$, one either continues it
 further to the bounded complementary component of $O(t)$ containing $K^*$
 or, if this is impossible, stops.
Note that $R^*(\ta)$ may contain one or several singularities of $g_f$;
 on the other hand, it may contain no singularities of $g_f$, lie in $R(\ta)$ and terminate on the boundary of some bubble.
Call $R^*(\ta)$ the \emph{ray towards $K^*$} (as opposed to a ray \emph{to} $K^*$).
Note that $\ta\in\Cc_n$ if and only if $R^*(\ta)$ reaches $O^*(t_n)$.

\begin{lem}
  \label{l:Ccn}
One has $\si_d(\Cc_n)=\Cc_{n+1}$.
Every level $n$ connector maps into, not necessarily onto, a level $n+1$ connector.
\end{lem}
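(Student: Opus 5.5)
The plan is to prove the claim ray by ray and then pass to connected components.

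\emph{Step 1: the key equivalence.} First show that, for every $\ta$,
$$
\si_d(\ta)\in\Cc_{n+1}\iff\ta\in\Cc_n,
$$
i.e. $f$ maps the ray towards $K^*$ of argument $\ta$ onto the initial part of the ray towards $K^*$ of argument $\si_d(\ta)$. We have $\ta\in\Cc_n$ exactly when $R^*(\ta)$ reaches $O^*(t_n)$.

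\begin{itemize}
\item $f$ maps external intervals to external intervals, multiplies $g_f$ by $d$, and sends singularities to singularities or to regular points.
\item Consequently $f(R^*(\ta))$ is a descending path of the same kind, starting along $R(\si_d(\ta))$.
\item $f(O^*(t_n))=O^*(t_{n+1})$, and $f$ maps the topological hull of $O^*(t_n)$ into that of $O^*(t_{n+1})$. Hence if $R^*(\ta)$ reaches $O^*(t_n)$, the image path reaches $O^*(t_{n+1})$, and it still lies inside the component containing $K^*$ at each singular level.
\end{itemize}

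The delicate point is that $R^*(\si_d(\ta))$ is built by a choice at each bubble, namely continuing into the complementary component containing $K^*$. I will check that the image of the correct choice is again the correct choice. This uses that $f(K^*)=K^*$ and that $f$ maps the bounded complementary component of $O(t)$ containing $K^*$ into the one of $O(dt)$ containing $K^*$ (Lemma~\ref{l:foam-cont} describes the tree structure). So the forward implication holds: $\ta\in\Cc_n$ gives $\si_d(\ta)\in\Cc_{n+1}$.

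\emph{Step 2: surjectivity onto $\Cc_{n+1}$.} Take $\eta\in\Cc_{n+1}$, so $R^*(\eta)$ reaches $O^*(t_{n+1})$. Pull back along $f$. Below the level $t_{m+1}$ one may have singularities, but the branch of $f^{-1}$ along the descending path can always be continued. A critical point on the path only offers several choices of continuation, and at least one keeps us in the closed region bounded by $O^*(t_n)$ side containing $K^*$.

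Concretely, $f^{-1}(O^*(t_{n+1}))$ contains $O^*(t_n)$. Starting from the point of $R^*(\eta)\cap O^*(t_{n+1})$, lift it to a preimage on $O^*(t_n)$, then lift the path backwards (upwards in potential) to infinity. This yields a broken-ray-type path of some argument $\ta$ with $\si_d(\ta)=\eta$ reaching $O^*(t_n)$. It remains to check that this path coincides with $R^*(\ta)$ up to $O^*(t_n)$, which is the same local analysis as in Step 1. This gives $\si_d(\Cc_n)=\Cc_{n+1}$.

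\emph{Step 3: connectors.} A level $n$ connector $C$ is a connected closed arc in $\Cc_n$. Its image $\si_d(C)$ is connected and lies in $\Cc_{n+1}$ by Step 1, so it is contained in a single component of $\Cc_{n+1}$, i.e. a level $n+1$ connector. Onto-ness can fail because a level $n+1$ connector may be the union of images of several level $n$ connectors glued via critical preimages. No argument is needed for that, since only "into" is claimed.

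The main obstacle is the bookkeeping in Steps 1--2 at singular levels. There I must show that $f$ respects the rule "continue into the component containing $K^*$". I expect to handle it by working in the annulus between $O^*(t_n)$ and $O(t_{m+1})$ together with its image, using the tree structure of bubbles.
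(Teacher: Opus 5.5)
Your proposal follows essentially the same route as the paper. You get $\si_d(\Cc_n)\subset\Cc_{n+1}$ by pushing $R^*(\ta)$ forward to reach $f(O^*(t_n))=O^*(t_{n+1})$, the reverse inclusion by lifting $R^*(\eta)$ from a chosen preimage on $O^*(t_n)$ of its point on $O^*(t_{n+1})$, and the connector claim by connectedness of $\si_d(C)$.

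One correction: the equivalence $\si_d(\ta)\in\Cc_{n+1}\iff\ta\in\Cc_n$ announced in Step~1 is false in the direction $\Leftarrow$. A generic point of $\Cc_{n+1}$ has $d$ preimages under $\si_d$, but only $d_n$ of them lie in $\Cc_n$, and $d_n=d_*<d$ for $n\le 0$. Nothing in your argument actually uses that direction: the lemma only needs the forward implication plus the existence of one preimage in $\Cc_n$, and that is exactly what your Steps~1 and~2 establish.
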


\begin{proof}
Suppose that $\ta\in \Cc_n$, which means that $R^*(\ta)$ reaches $O^*(t_n)$.
Then, clearly, $R^*(\si_d(\ta))=f(R^*(\ta))$ reaches $f(O^*(t_n))=O^*(t_{n+1})$.
Assume now that $R^*(\ta)$ reaches $O^*(t_{n+1})$, say, at a point $z\in O^*(t_{n+1})$.
Choose any $f$-preimage $z'$ of $z$ that lies in $O^*(t_n)$, and
 let $R'$ be the pullback of $R^*(\ta)$ containing $z'$.
It follows that $R'=R^*(\ta')$ for some $\si_d$-preimage $\ta'$ of $\ta$,
 which means that $\ta'\in\Cc_n$ and $\ta=\si_d(\ta')\in\si_d(\Cc_n)$.
The last claim of the lemma is immediate.
\end{proof}

We can now describe a geometric meaning of a level $n$ void $(\al;\be)$
 in terms of the dynamical plane of $f$.
No ray towards $K^*$ between $R^*(\al)$ and $R^*(\be)$ reaches the inner boundary $O^*(t_n)$ of $A_n$.
It follows that $R^*(\al)$ and $R^*(\be)$ merge at a point $z$ with $g_f(z)\ge t_n$.
On the other hand, since $(\al;\be)$ is a level $n$ void, $g_f(z)<t_{n+1}$.
Also, no ray towards $K^*$ whose external argument is in $(\al;\be)$ merges with $R^*(\al)$ or $R^*(\be)$.
Thus, a level $n$ void $(\al;\be)$ corresponds to a pair of rays $R^*(\al)$, $R^*(\be)$ towards $K^*$
 that merge in $A_n$ and are such that no $R^*(\ga)$ with $\ga\in(\al;\be)$ merges $R^*(\al)\cup R^*(\be)$.

\begin{lem}
  \label{l:voids}
Let $(\al;\be)$ be a level $n$ void of $(f,K^*)$.
Either $\si_d(\al)=\si_d(\be)$, or $(\si_d(\al);\si_d(\be))$ is a level $>n$ void.
Voids of $(f,K^*)$ are either disjoint or nested.
\end{lem}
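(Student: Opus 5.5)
The plan is to work in the dynamical plane and use the geometric description of voids given just before the statement: a level $n$ void $(\al;\be)$ corresponds to two rays $R^*(\al)$, $R^*(\be)$ towards $K^*$ that merge at a point $z$ with $t_n\le g_f(z)<t_{n+1}$. Moreover, no ray $R^*(\ga)$ with $\ga\in(\al;\be)$ reaches $O^*(t_n)$. Let $W$ denote the open region bounded by the two arcs of $R^*(\al)$, $R^*(\be)$ from $O(t_{m+1})$ down to $z$ and by the arc of $O(t_{m+1})$ with arguments in $(\al;\be)$. By definition, $W$ is disjoint from $K^*$ and from $O^*(t_n)$ on the $(\al;\be)$ side.

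For the first claim, apply $f$. Since $f$ maps external intervals to external intervals and multiplies $g_f$ by $d$, it maps $R^*(\al)$ and $R^*(\be)$ onto subsets of $R^*(\si_d(\al))$ and $R^*(\si_d(\be))$. These image rays meet at $f(z)$, where $g_f(f(z))=d\,g_f(z)\in[t_{n+1},t_{n+2})$. If $\si_d(\al)=\si_d(\be)$ we are done. Otherwise, consider the arc $(\si_d(\al);\si_d(\be))$ and ask whether any $\ga'$ in it has $R^*(\ga')$ reaching $O^*(t_{n+1})$. Suppose one did. By the pullback argument from the proof of Lemma~\ref{l:Ccn}, a suitable pullback of $R^*(\ga')$ through $O^*(t_n)$ would be a ray $R^*(\ga)$ with $\ga$ a preimage of $\ga'$ that reaches $O^*(t_n)$.

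The key step, and the expected main obstacle, is to show that such a preimage can be chosen with $\ga\in(\al;\be)$, which contradicts the void property. I would argue that $f$ restricted to the sector between $R^*(\al)$ and $R^*(\be)$ below $z$ (near $K^*$) maps onto the sector between the image rays. To see this, use that $f$ is a branched covering on the complement of $K(f)$ that preserves the cyclic order of rays locally, together with the standard fact that $\si_d$ maps the arc $[\al;\be]$ onto $[\si_d(\al);\si_d(\be)]$ when this arc is shorter than $1/d$. When the arc has length at least $1/d$, the image covers the whole circle, and one should instead use that the endpoint rays land at the same merging point. In that case, a ray reaching $O^*(t_{n+1})$ between the image rays must, at some preimage, lie between $R^*(\al)$ and $R^*(\be)$, because the region between them contains all $f$-preimages of the region between the image rays that are adjacent to $O^*(t_n)$. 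Once no such $\ga'$ exists, $(\si_d(\al);\si_d(\be))$ lies in the complement of $\Cc_{n+1}$, and its endpoints lie in $\Cc_{n+1}$ by Lemma~\ref{l:Ccn}. Hence it is a void of level $\ge n+1$, that is, of level $>n$.

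For the second claim, let $V_1$ be a component of $\uc\sm\Cc_{n_1}$ and $V_2$ a component of $\uc\sm\Cc_{n_2}$ with $n_1\le n_2$. Since $\Cc_{n_2}\subset\Cc_{n_1}$ (a ray reaching $O^*(t_{n_1})$ also reaches $O^*(t_{n_2})$), we have $\uc\sm\Cc_{n_1}\supset\uc\sm\Cc_{n_2}$. So $V_2$ is a connected subset of $\uc\sm\Cc_{n_1}$ and lies inside a single component. Consequently $V_1$ and $V_2$ are either disjoint or $V_2\subset V_1$.
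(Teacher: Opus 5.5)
\textbf{The first claim has a gap, at exactly the step you flag.} Everything rests on this: if some $R^*(\ga')$ with $\ga'\in(\si_d(\al);\si_d(\be))$ reaches $O^*(t_{n+1})$, then one of its lifts through $O^*(t_n)$ has argument in $(\al;\be)$. The proposal only asserts this.

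\begin{itemize}
\item The ``sector between $R^*(\al)$ and $R^*(\be)$ below $z$'' is empty, because the two rays coincide below $z$. The only region between them is the wedge above $z$, and, as you note yourself, it misses $O^*(t_n)$.
\item For short arcs, surjectivity of $\si_d\colon[\al;\be]\to[\si_d(\al);\si_d(\be)]$ gives some preimage of $\ga'$ in $(\al;\be)$. It does not give one whose $R^*$ is a lift through $O^*(t_n)$.
\item In the long-arc case, the sentence about ``all $f$-preimages adjacent to $O^*(t_n)$'' is just the claim restated.
\end{itemize}

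What actually has to be decided is which side of the image cut $f(R^*(\al))\cup f(R^*(\be))$ carries the arguments $(\si_d(\al);\si_d(\be))$. If it is the side not containing $K^*$, no lifting is needed; the fact you already recorded finishes the proof. The cut lies in $\{g_f\ge d\,g_f(z)\}\subset\{g_f\ge t_{n+1}\}$, so the Jordan curve $O^*(t_{n+1})$ lies in the closure of the $K^*$-side, and those rays are shielded from it. This forward shielding is the paper's one-line proof.

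\textbf{The side question is a genuine obstruction, and local preservation of the cyclic order of rays does not settle it.}
\begin{itemize}
\item If $f$ is locally injective at $z$, the wedge and the common continuation of $R^*(\al)$, $R^*(\be)$ below $z$ go to opposite sides of the image cut. That continuation enters the disk bounded by $O^*(g_f(z))$, which $f$ maps onto the disk bounded by $O^*(d\,g_f(z))$. So the wedge lands on the side away from $K^*$, and the argument goes through.
\item Now suppose $z$ is critical and $f(z)$ is again a singularity. For example, take a quartic with $d_*=2$, escaping critical points $\om_\pm$ with $f(\om_-)=\om_+$, and $z=\om_-\in\d U^*$. In local coordinates $f$ is $w\mapsto w^2$ and $g_f$ is affine in $\mathrm{Re}(w^4)$. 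Four rays enter $z$, and the downward direction opposite to the one entering $U^*$ has the same image as that one.
\item Consequently the void sector containing that opposite direction is mapped to the $K^*$-side of the image cut. Its bubble contains a component $K'\ne K^*$ with $f(K')=K^*$. So $\si_d(\al)\ne\si_d(\be)$, yet $(\si_d(\al);\si_d(\be))$ contains arguments of rays reaching $O^*(t_{n+1})$.
\end{itemize}

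Hence your key step cannot be proved without an extra hypothesis, such as local injectivity of $f$ at $z$. The paper's shielding sentence passes over this point as well.

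\textbf{The second claim is correct, and your route is simpler than the paper's.} The paper argues by contradiction with four interlaced rays; you observe that voids are complementary components of the monotone family $\Cc_n$. Fix the inclusion, though: for $n_1\le n_2$ one has $\Cc_{n_1}\subset\Cc_{n_2}$, as your parenthetical shows, not the reverse. The complement inclusion you then use is the correct one.
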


\begin{proof}
Since $\al\in\R/\Z$, we may write $\si_d(\al)$ and $d\al$ interchangeably.
Let $z$ be the point where $R^*(\al)$ and $R^*(\be)$ merge.
Assuming that $\si_d(\al)\ne\si_d(\be)$, consider the $f$-images $R^*(d\al)$ and $R^*(d\be)$
 of $R^*(\al)$ and $R^*(\be)$.
These images merge at a point $z'$ with $g_f(z')\ge dg_f(z)$ (it is possible that $g_f(z')>dg_f(z)$ if $z$ is a critical point of $f$)
 thus shielding $R^*(\ga')$ from $K^*$, for all $\ga'\in (d\al;d\be)$.
It follows that $(d\al);d\be)$ is a level $>n$ void, as claimed.

Finally, let $(\al;\be)$ and $(\ga;\delta)$ be two voids of $(f,K^*)$ that are neither
 disjoint nor nested; we may assume that $\al<\ga<\be<\delta$ in the circular order of $\R/\Z$.
By the above, $R^*(\al)$ and $R^*(\be)$ merge
 and therefore shield $R^*(\ga)$ from $R^*(\delta)$, unless all four rays merge.
Since $\ga\in (\al;\be)$, this contradicts $(\al;\be)$ being a void.
\end{proof}

\subsection{The gap}
\label{ss:G}
We can now associate a certain gap $G$ with $(f,K^*)$.

\begin{lem}
\label{l:G}
Let $G'$ be the intersection of $\Cc_n$ for all $n\in\Z$. Set $G:=\ch(G')$.
Then $G$ is a degree $d_*$ invariant gap.
\end{lem}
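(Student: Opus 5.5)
We must prove three things: $G'$ is compact and invariant ($\si_d(G')=G'$); the image of every hole is either a point or again a hole, which makes $G$ an invariant gap; and the degree is $d_*$.

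\emph{Compactness and invariance.} The sets $\Cc_n$ are compact and nested decreasing in $n$. Indeed, if $R^*(\ta)$ reaches $O^*(t_n)$, it has already crossed $O^*(t_{n+1})$. Hence $G'=\bigcap_n\Cc_n=\bigcap_{n\to-\infty}\Cc_n$ is compact and nonempty. By Lemma \ref{l:Ccn}, $\si_d(\Cc_n)=\Cc_{n+1}$. For decreasing compact sets, images commute with intersections, so
$$\si_d(G')=\bigcap_n\si_d(\Cc_n)=\bigcap_n\Cc_{n+1}=G'.$$
Geometrically, $\ta\in G'$ if and only if $R^*(\ta)$ reaches $K^*$, so $G'$ is the set of arguments of smooth or broken rays to $K^*$.

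\emph{Holes.} Let $(\al;\be)$ be a hole of $G'$. Since the $\Cc_n$ decrease, $\uc\sm G'$ is the increasing union of the voids of all levels. Each void is an open arc, and by Lemma \ref{l:voids} any two voids are nested or disjoint. A hole is connected, so it is the increasing union of the voids it contains, and these voids form a chain. Their endpoints lie in $G'$ or are limits of such; since $\al,\be\in G'$ and no point of $G'$ lies inside the hole, we want the hole to equal a single void.

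I expect this to be the main obstacle, because a hole could a priori be an infinite increasing union of voids. To rule this out, I would use the fact that the endpoints of voids lie in $\Cc_n$ for the relevant level and are arguments of rays that merge. A strictly increasing chain of voids inside $(\al;\be)$ has endpoints converging to $\al,\be$. These endpoints would then be arguments of rays merging at points of level $\ge t_n$ with $n$ bounded above. By compactness of the set of merging points at levels in $[t_n,t_{m+1}]$, the limit rays $R^*(\al)$ and $R^*(\be)$ would themselves merge, which would make $(\al;\be)$ itself a void. In that case Lemma \ref{l:voids} applies directly: either $\si_d(\al)=\si_d(\be)$, or $(\si_d\al;\si_d\be)$ is a void. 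That image void is disjoint from $G'=\si_d(G')$ and has endpoints in $G'$, so it is a hole.

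If instead the rays $R^*(\al)$ and $R^*(\be)$ reach $K^*$ without merging, I would argue as follows. The sector $\Sigma(\al;\be)$ contains no ray to $K^*$. Pushing forward by $f$, which is a local homeomorphism away from critical points, the rays $R^*(d\al)$ and $R^*(d\be)$ again bound a sector. This sector contains no ray to $K^*$, except when the sector contains a critical point, in which case it wraps. This gives the hole property directly, and I would make this the uniform argument for both cases.

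\emph{Degree.} The map $\si_d|_{G'}$ is generically $d_*$-to-one because $f|_{K^*}$ has degree $d_*$. Rays to $K^*$ landing near a generic point of $\partial K^*$ have exactly $d_*$ preimage rays to $K^*$, one near each $f|_{K^*}$-preimage. Alternatively, I would count via the covering $f:A_{-n}\to A_{-n+1}$ of degree $d_*$ from Lemma \ref{l:mapA_n}. Connectors of deep level correspond to arcs crossing these annuli, and each is covered $d_*$ times, which gives $\deg(G)=d_*$ in the sense of Theorem \ref{t:desc-gap}.

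Finally, $G'$ is uncountable: if it were countable, the quotient in Theorem \ref{t:desc-gap} would be a point, contradicting $d_*>1$. Therefore $G$ is a genuine infinite invariant gap.
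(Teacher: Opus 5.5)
\textbf{The gap is in the hole step.} Your claim that every hole of $G'$ is a single void is false, and the compactness argument you give for it does not work. For each $n$ there are only finitely many voids of level $\ge n$: they are the components of $\uc\setminus\Cc_n$, so there are at most $b_n$ of them. Hence a strictly increasing chain of voids exhausting a hole $(\al;\be)$ must have levels tending to $-\infty$. The merge points of the corresponding boundary rays then have potentials tending to $0$ and accumulate on $K^*$. They are not confined to any band $t_n\le g_f\le t_{m+1}$, so nothing forces the limit rays $R^*(\al)$, $R^*(\be)$ to merge.

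Such holes do occur. They are the (pre)periodic-type holes, for instance two fixed rays landing at a common fixed point of $\partial K^*$ with the escaping critical point in the sector between them. There $R^*(\al)$ and $R^*(\be)$ reach $K^*$ without ever merging, and the hole is an infinite increasing union of voids.

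Your fallback ``sector'' argument for this case is only asserted. What must be shown is that no $\ga\in(d\al;d\be)$ lies in $G'$. A pullback of $R^*(\ga)$ into $\Sigma(\al;\be)$ can end on a component of $f^{-1}(K^*)$ other than $K^*$, and the phrase ``it wraps'' does not control this.

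\textbf{The fix.} Stop trying to exclude the chain and apply your single-void argument to each member of it. This is exactly the paper's proof. Write the hole as $\bigcup_n(\al_n;\be_n)$ with voids satisfying $\al_n\searrow\al$ and $\be_n\nearrow\be$. Since $\si_d(\al)\neq\si_d(\be)$, continuity gives $\si_d(\al_n)\neq\si_d(\be_n)$ for all large $n$. By Lemma~\ref{l:voids}, each $(d\al_n;d\be_n)$ is then a void, hence disjoint from $G'$. Their increasing union is $(d\al;d\be)$, which is therefore disjoint from $G'$. Its endpoints lie in $G'$ because $\si_d(G')\subset G'$, so $(d\al;d\be)$ is a hole. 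This also covers your single-void case.

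\textbf{The remaining parts.} Your derivation of $\si_d(G')=G'$ from $\si_d(\bigcap_n\Cc_n)=\bigcap_n\si_d(\Cc_n)$ for nested compacta is correct. It is cleaner than the paper's route, which pulls back rays using properness of $f:U^*\to f(U^*)$. Your degree sketch is at the same level of detail as the paper's one-line claim. One minor point: deducing that $G'$ is uncountable from $d_*>1$ via Theorem~\ref{t:desc-gap} is circular, because $\deg(G)$ is only defined there once $G'_p$ is known to be non-degenerate.
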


\begin{proof}
Since $\si_d(\Cc_n)=\Cc_{n+1}$, by Lemma \ref{l:Ccn},
 and since the image of the intersection lies in the intersection of the images, we have $\si_d(G')\subset G'$.
Now, take $\ta\in G'$.
As $f:U^*\to f(U^*)$ is proper and $K^*$ is fully invariant under $f|_{U^*}$,
 there is an $f$-pullback of $R^*(\ta)$ that also accumulates in $K^*$.
It follows that $\ta=\si_d(\ta')$ for some $\ta'\in G'$, and we see that $\si_d(G')=G'$.
What remains to show is that, given a hole $(\al;\be)$ of $G$, either $\si_d(\al)=\si_d(\be)$,
 or $(\si_d(\al);\si_d(\be))$ is a hole of $G$.

Assume that $\si_d(\al)\ne\si_d(\be)$.
By Lemma \ref{l:voids}, voids of $(f,K^*)$ are either disjoint or nested.
Hence, $(\al;\be)$ can be either a void, or the union of an increasing sequence of voids.
In the former case, the desired property follows from Lemma \ref{l:voids}.
Consider the latter case: there are sequences $\al_n\searrow\al$ and $\be_n\nearrow\be$ (as $n\to +\infty$)
 such that $(\al_n;\be_n)$ is a level $-n$ void, for all $n>0$.
By Lemma \ref{l:voids}, the arcs $(d\al_n;d\be_n)$ are also finite level voids.
Hence,
$$
(d\al;d\be)=\bigcup_{n>0} (d\al_n;d\be_n)
$$
 is contained in a hole of $G$.
On the other hand, since $R^*(\al)$ and $R^*(\be)$ extend all the way to $K^*$,
 so do the images $R^*(d\al)$ and $R^*(d\be)$.
It follows that $(d\al;d\be)$ is a hole of $G$.

By definition, $G$ is a $\si_d$-invariant gap.
From the fact that the PL map $f:U^*\to f(U^*)$ has degree $d_*$, it now easily follows that
 the gap $G$ also has degree $d_*$.
\end{proof}

Lemma \ref{l:G} justifies the following definition.

\begin{dfn}[Renormalization gap of $(f,K^*)$]
\label{d:renormG}
The $\si_d$-invariant gap $G$ associated with $(f,K^*)$ as in Lemma \ref{l:G}
 is called the \emph{renormalization gap of $(f,K^*)$}.
\end{dfn}

A void $(\al;\be)$ gives rise to an associated \emph{cut}, the simple curve $\Gamma_{\al\be}$ from infinity
 to infinity contained in the union $R^*(\al)\cup R^*(\be)$.
The complement of $\Gamma_{\al\be}$ in $\C$ consists of two components;
 the component $W_{\al\be}$ not containing $K^*$ is called the \emph{wedge} of $(\al;\be)$.
Say that the void $(\al;\be)$ is \emph{major} if $f$ is not injective on the closure of $W_{\al\be}$.
The next proposition is immediate from the definitions.

\begin{prop}
\label{p:mvoid}
If a void is not major, then its $\si_d$-image is a void of a higher level.
Every void is eventually mapped to a major void.
\end{prop}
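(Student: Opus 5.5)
We prove the two claims of Proposition \ref{p:mvoid} separately, using only the definitions of voids, wedges and cuts together with Lemma \ref{l:voids}.

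\emph{First claim.} Let $(\al;\be)$ be a level $n$ void that is not major, so $f$ is injective on $\ol{W_{\al\be}}$. Let $z$ be the merging point of $R^*(\al)$ and $R^*(\be)$, so $t_n\le g_f(z)<t_{n+1}$.

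\begin{enumerate}
\item Suppose $\si_d(\al)=\si_d(\be)$. Then $f(R^*(\al))$ and $f(R^*(\be))$ are the same ray towards $K^*$, so $f$ identifies the two sides of the cut $\Gamma_{\al\be}$. This contradicts injectivity on $\ol{W_{\al\be}}$, hence $\si_d(\al)\ne\si_d(\be)$.
\item By Lemma \ref{l:voids}, $(d\al;d\be)$ is then a void of level $>n$. We still have to check it is the image of $(\al;\be)$, i.e.\ that $\si_d$ maps $(\al;\be)$ onto $(d\al;d\be)$ rather than onto the complementary arc.
\item For this we use injectivity of $f$ on $\ol{W_{\al\be}}$. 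The map $f$ sends $W_{\al\be}$ conformally onto the component of $\C\sm f(\Gamma_{\al\be})$ not containing $K^*=f(K^*)$, which is $W_{d\al,d\be}$.
\item The external rays in $W_{\al\be}$ are exactly those with arguments in $(\al;\be)$, and $f$ maps them injectively onto the rays in $W_{d\al,d\be}$. Hence $\si_d$ is injective on $(\al;\be)$ with image $(d\al;d\be)$.
\end{enumerate}

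This step also yields $|(d\al;d\be)|=d\,|(\al;\be)|$, which the second claim relies on.

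\emph{Second claim.} Suppose some void $V_0=(\al;\be)$ is never mapped to a major void. Applying the first claim repeatedly, $V_k=\si_d^k(V_0)$ is a void for every $k$, and $|V_k|=d^k|V_0|$. Since voids are proper arcs of $\uc$, they have length $<1$, which is impossible for large $k$. So some $V_k$ is major.

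The one step needing care is identifying the image domain in the first claim, i.e.\ showing $f(W_{\al\be})=W_{d\al,d\be}$. The argument for this:
\begin{enumerate}
\item $W_{\al\be}$ contains no point of $K^*$, since the cut separates it from $K^*$.
\item The boundary of $W_{\al\be}$ maps homeomorphically onto the cut $\Gamma_{d\al,d\be}$, using injectivity on the closure.
\item Hence $f(W_{\al\be})$ is a complementary component of $\Gamma_{d\al,d\be}$ disjoint from $K^*$, which must be $W_{d\al,d\be}$.
\end{enumerate}
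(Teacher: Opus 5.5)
Your overall plan is sound, and it is presumably what the paper means by ``immediate from the definitions'': injectivity of $f$ on $\ol{W_{\al\be}}$ forces $\si_d(\al)\ne\si_d(\be)$ and injectivity of $\si_d$ on $(\al;\be)$, then Lemma \ref{l:voids} applies, then you iterate. Your step 1 is fine, and so is your second claim. For the latter you could equally use that levels strictly increase and are bounded above by $m$, since $\Cc_{m+1}=\uc$.

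The gap is in the step you flagged yourself: identifying $f(W_{\al\be})$ with $W_{d\al,d\be}$. You infer that $f(W_{\al\be})$ is disjoint from $K^*$ from the fact that $W_{\al\be}$ contains no point of $K^*$ (and, in the main text, from $f(K^*)=K^*$). That inference is invalid. Since $d_*<d$, the set $f^{-1}(K^*)$ has components other than $K^*$, and nothing you say prevents one of them from lying in $W_{\al\be}$. If one did, $f(W_{\al\be})$ would be the component of $\C\sm\Gamma_{d\al,d\be}$ that contains $K^*$. Injectivity on $\ol{W_{\al\be}}$ alone does not exclude this, because $K^*\not\subset\ol{W_{\al\be}}$. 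Your step 4 deduces that $\si_d$ maps $(\al;\be)$ onto $(d\al;d\be)$ precisely from this identification, so as written the first claim is not yet established.

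The repair is to look near infinity, which also makes the wedge-image detour unnecessary. For every sufficiently large $t$, the points of potential $t$ on the rays $R(\ga)$ with $\ga\in(\al;\be)$ lie in $W_{\al\be}$, and in B\"ottcher coordinates $f$ acts there as $\phi\mapsto\phi^d$. Suppose $|(\al;\be)|>1/d$. Then for suitable $\ga$, the points of potential $t$ on $R(\ga)$ and $R(\ga+1/d)$ are distinct points of $W_{\al\be}$ with the same image, contradicting injectivity. Your step 1 handles the case $|(\al;\be)|=1/d$. Hence $|(\al;\be)|<1/d$, so $\si_d$ maps $(\al;\be)$ homeomorphically, preserving orientation, onto $(d\al;d\be)$. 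This arc is a void of level $>n$ by Lemma \ref{l:voids}, and you also get $|(d\al;d\be)|=d\,|(\al;\be)|$, which your second claim needs. If you still want $f(W_{\al\be})=W_{d\al,d\be}$, it now follows because $f(W_{\al\be})$ contains the image sector $(d\al;d\be)$ near infinity. A posteriori, this also shows $W_{\al\be}$ contains no preimage of $K^*$.
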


Note that the notion of renormalization gap extends to the more general case when $K^*$ is the
 filled Julia set of any PL restriction of $f$; in this more general setup, $K^*$ is still assumed to be connected,
 but it is not necessarily a component of $K(f)$.

\section{Metric theory of invariant gaps}
\label{s:metr}

In this section, we establish certain estimates of the number and the length of the
 level connectors associated with $(f,K^*)$.
These yield upper and lower bounds on the Hausdorff measure of $G'$ in its
 Hausdorff dimension, as well as on the Hausdorff measures of segments of $G'$.

\subsection{Connectors}
\label{ss:holconn}
Recall that a connector of level $n$ is a component of the set $\Cc_n$.
Also, recall that $m$ is the smallest non-negative integer with the property $\Cc_{m+1}=\uc$.
Previously, we identified $\uc$ with $O(t_{m+1})$.

\begin{dfn}\label{d:bn}
The number of components of $\Cc_{n}$ is denoted $b_{n}$, and the
maximal length of a component of $\Cc_{n}$, for $n\ge 0$, is denoted $\la_{n}$.
\end{dfn}

Under the map $\si_d$, a generic point of $\Cc_{n+1}$ has $d_n$ preimages in $\Cc_n$, where $d_*\le d_n\le d$.
For $n\ge 0$, one has $d_{-n}=d_*$, while for $n>m$, one has $d_n=d$.

In what follows, we will use Lemma \ref{l:linear} stated without proof.

\begin{lem}\label{l:linear}
Let $L:\R\to\R$ be a real affine function with fixed point $a$ and multiplier $\ga>1$ at $a$.
 Suppose that a sequence
$u_n\in\R$ satisfies $a<u_{n+1}\le L(u_n)$. Then $u_k\le L^k(u_0)=a+\ga^k(u_0-a)$.
\end{lem}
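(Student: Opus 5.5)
The approach is induction on $k$, comparing $u_k$ with the orbit $L^k(u_0)$ and using that $L$ preserves order because $\ga>1>0$.

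First, since $L$ is affine with fixed point $a$ and multiplier $\ga$, it has the form $L(x)=a+\ga(x-a)$. Iterating this formula gives
$$
L^k(x)=a+\ga^k(x-a),
$$
which is the closed form in the statement. Because $\ga>0$, $L$ is increasing, and hence so is every iterate $L^k$.

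The claim $u_k\le L^k(u_0)$ is then proved by induction on $k$.

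1. The base case $k=0$ is trivial.
2. Suppose $u_k\le L^k(u_0)$. The hypothesis gives $u_{k+1}\le L(u_k)$.
3. Since $L$ is increasing, $L(u_k)\le L(L^k(u_0))=L^{k+1}(u_0)$.
4. Combining the two inequalities gives $u_{k+1}\le L^{k+1}(u_0)$, which completes the induction.

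The lower bound $a<u_{n+1}$ is not needed for the inequality itself. It only records that the terms stay above the repelling fixed point, so the bound is meaningful: $u_0-a>0$, and therefore $L^k(u_0)$ grows like $\ga^k$.

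There is no real obstacle here. The one step that needs care is the monotonicity of $L$, which holds because $\ga>1>0$.
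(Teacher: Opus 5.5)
Your induction is correct and complete: writing $L(x)=a+\gamma(x-a)$, noting that $L$ is increasing because $\gamma>0$, and chaining $u_{k+1}\le L(u_k)\le L(L^k(u_0))=L^{k+1}(u_0)$ is the standard argument. The paper states this lemma without proof, so your argument supplies the omitted step, and you are right that the hypothesis $a<u_{n+1}$ plays no role in the inequality itself.
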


We apply Lemma \ref{l:linear} to obtain inequalities on $b_n$.

\begin{lem}
  \label{l:bn0}
If $m>0$ and $0\le n\le m$, then
$
b_n< (d-d_*)(d-1)^{m-n}.
$
\end{lem}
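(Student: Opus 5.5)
We argue by downward induction on $n$, starting at $n=m$ and going down to $n=0$, and we count voids rather than connectors. Since $\Cc_n$ is a finite union of closed arcs and $\Cc_n\ne\uc$ for $n\le m$, the number $b_n$ of its components equals the number of components of $\uc\sm\Cc_n$. So $b_n$ is the number of maximal voids of level $\ge n$. By Lemma \ref{l:voids} these voids are disjoint or nested, so the maximal ones are well defined. The plan is to prove a one-step estimate $b_n\le L(b_{n+1})$ for an affine $L$ with multiplier $d-1$, and then to apply Lemma \ref{l:linear}.

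\emph{Base case $n=m$.} By the definition of $m$, there are no voids of level $\ge m+1$. Hence, by Lemma \ref{l:voids}, every level $m$ void $(\al;\be)$ satisfies $\si_d(\al)=\si_d(\be)$, i.e., it is critical and its length is a positive multiple of $\frac1d$. The complement of $\Cc_m$ is then a union of such arcs. Outside these arcs, $\si_d|_{\Cc_m}$ is generically $d_m$-to-one onto $\Cc_{m+1}=\uc$, so $|\uc\sm\Cc_m|=(d-d_m)/d$. Each void has length at least $\frac 1d$, so
$$
b_m\le d-d_m\le d-d_*.
$$
Strictness should come from $m>0$. We must have $d_m>d_*$: otherwise all the degree drop would already happen at level $m$, which forces $\Cc_n$ to be stable for $n<m$ and contradicts the minimality of $m$ together with the position of the critical point on $\d U^*$ at level $0$. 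I would verify this carefully and keep only $b_m\le d-d_*$ if the strict inequality has to be moved to the inductive step instead.

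\emph{Inductive step.} Take a maximal void $V=(\al;\be)$ of level $\ge n$ that is not of level $\ge n+1$. By Lemma \ref{l:voids} and Proposition \ref{p:mvoid}, either $V$ is critical, or $\si_d(V)$ is contained in a maximal void of level $\ge n+1$. Each maximal void of level $\ge n+1$ has at most $d$ pullback arcs under $\si_d$, one in each preimage branch. However, the pullback arcs lying in the same "sheet" glue along the cut through the critical wedges. The key geometric input is that in passing from $\Cc_{n+1}$ to $\Cc_n$ at least one preimage of every connector is lost or merged. This happens because $\Cc_n$ is a pullback of $\Cc_{n+1}$ along a map whose degree on $\Cc_n$ is $d_n<d$. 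This should give
$$
b_n\le (d-1)\,b_{n+1}+c
$$
for a constant $c\le 0$, where the critical voids created at level $n$ are absorbed by the missing branch. Lemma \ref{l:linear}, applied with $u_k=b_{m-k}$ and $L(x)=(d-1)x$ (or $(d-1)x+c$ with fixed point $a\le 0$ handled directly), then yields $b_n\le (d-1)^{m-n}b_m$. Combined with the base case this gives the claim, with strict inequality.

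The main obstacle is the inductive step, namely justifying a multiplier $d-1$ rather than the naive $d$. Concretely, one has to show that the new critical voids appearing at level $n$ are compensated by the fact that each connector of level $n+1$ lifts to at most $d-1$ connectors of level $n$. My first attempt would be to count the pullbacks of a level $n+1$ connector $J$ through the cut $\Gamma_{\al\be}$ of a major void: the preimage $\si_d^{-1}(J)$ has $d$ components, and I would argue that the wedge of each critical void swallows the pieces of $\si_d^{-1}(J)$ lying inside it, so that at most $d-1$ of them can be separate components of $\Cc_n$.
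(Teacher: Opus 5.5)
Your inductive step fails, and the problem is not just bookkeeping. The recursion $b_n\le (d-1)b_{n+1}+c$ with $c\le 0$, and hence $b_n\le (d-1)^{m-n}b_m$, is false.

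The heuristic that a critical void swallows a whole preimage arc of a connector is wrong. Let $V=(\al;\be)$ be a critical complementary arc of $\Cc_n$. Since $\si_d$ is injective on each component of $\si_d^{-1}(\Cc_{n+1})$ and $\si_d(\al)=\si_d(\be)$, the points $\al$ and $\be$ lie in two \emph{different} components. Moreover, $V$ need not contain any whole component. The drop from $d$ to $d_n$ generic preimages is then shared between two arcs rather than eliminating one, and the cut at $V$ creates an extra component. Collapsing the closures of the critical complementary arcs of $\Cc_n$ and using the induced degree $d_n$ covering gives, for $n<m$, the exact count $b_n=d_nb_{n+1}+k_n$. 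Here $k_n\ge 0$ is the number of critical complementary arcs of $\Cc_n$.

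Here is a concrete instance. Take $d=4$, $d_*=2$, escaping critical points $\om_-\in\d U^*$ and $\om_+$, and $m\ge 2$.
\begin{itemize}
\item $\Cc_m=\uc\sm V$, where $V$ is the critical void of length $\frac14$ coming from the bubble at $\om_+$, so $b_m=1$.
\item The disk bounded by $O^*(t_{m-1})$ contains $\om_-$ but not $\om_+$, so $d_{m-1}=3$.
\item $\Cc_{m-1}$ is $\Cc_m$ minus the three voids of length $\frac1{16}$ created by the three preimages of $\om_+$ on $O^*(g_f(\om_+)/4)$.
\end{itemize}
For generic positions this gives $b_{m-1}=4>3=(d-1)b_m$: all four components of $\si_4^{-1}(\Cc_m)$ meet $\Cc_{m-1}$.

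So the additive term is genuinely positive, and this is how the paper argues. It uses $b_n\le d_nb_{n+1}+(d-d_n)\le (d-1)b_{n+1}+(d-2)$. Then Lemma \ref{l:linear} with $L(x)=(d-1)x+d-2$, whose fixed point is $-1$, gives $b_n\le (b_m+1)(d-1)^{m-n}-1$.

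With this correct recursion, the strict base case $b_m\le d-d_*-1$ is indispensable. Your fallback of keeping only $b_m\le d-d_*$ and recovering strictness in the induction is therefore not available: it would only give $b_n\le (d-d_*+1)(d-1)^{m-n}-1$.

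The strict base case also holds for a simpler reason than the one you sketch (your "stability of $\Cc_n$" argument is not the right mechanism). Since $m>0$, we have $t_m>\eps$. Hence the disk bounded by $O^*(t_m)$ contains $\ol{U^*}$, and so contains the escaping critical point on $\d U^*$ in its interior. Therefore $d_m\ge d_*+1$, and your length count gives $b_m\le d-d_m\le d-d_*-1$.
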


\begin{proof}
Note that $\si_d:\Cc_n\to \Cc_{n+1}$ can be represented as a
 composition of two maps: firstly, we collapse the closure of every level $\ge n$ void of length $\frac id$,
 where $i$ is an integer; secondly, we apply a certain degree $d_n$ covering.
The number of components in $\Cc_n$ is therefore at most $d_n$ times $b_{n+1}$ plus the number
 of collapsed voids; the latter is bounded above by $d-d_n$.
Since $2\le d_n\le d-1$ for $0\le n\le m$, then
for $n\in [0,m]$ one has
$$
b_n\le d-d_n + d_nb_{n+1}\le d-2+(d-1)b_{n+1}.
$$
Set $L(x)=(d-1)x+d-2$; since $-1$ is the $L$-fixed point and $b_m\le d-d_*-1$, then, by
Lemma \ref{l:linear},
$$
b_n\le -1+[d-d_*-1-(-1)](d-1)^{m-n}<(d-d_*)(d-1)^{m-n},
$$
as claimed.
\end{proof}

Set
\begin{equation}\label{e:c0}
c_0:=\frac{d-d_*}{d_*-1}.
\end{equation}

\begin{lem}
  \label{l:bn}
For $n>0$, one has $b_{-n}<(b_0+c_0)d_*^n$.
If $m=0$, then $b_0\le d-d_*$ and $b_{-n}<(b_0+c_0)d_*^n=c_0\,d_*^{n+1}$.
\end{lem}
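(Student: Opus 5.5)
We mimic the proof of Lemma \ref{l:bn0}, now working at negative levels where the local degree is constant. For $n\ge 0$ the map $\si_d:\Cc_{-n-1}\to\Cc_{-n}$ has generic degree $d_{-n-1}=d_*$. As in Lemma \ref{l:bn0}, we factor it in two steps. First we collapse the closures of the level $\ge -n-1$ voids whose length is an integer multiple of $\frac1d$; these are the voids with $\si_d(\al)=\si_d(\be)$. Then we apply a degree $d_*$ covering onto $\Cc_{-n}$. Each connector of $\Cc_{-n}$ therefore has at most $d_*$ preimage components under the covering. Each collapsed void can merge two components into one, so before collapsing it contributes at most one extra component. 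The number of collapsed voids is at most $d-d_{-n-1}=d-d_*$: the total length of these critical voids together with the preimage structure is limited by the number $d-d_*$ of ``missing'' preimage sheets, exactly as in Lemma \ref{l:bn0}. This gives the recursion
$$
b_{-n-1}\le d_*\,b_{-n}+(d-d_*).
$$

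Next we apply Lemma \ref{l:linear} to the affine map $L(x)=d_*x+(d-d_*)$, which has multiplier $d_*>1$ and fixed point $a=-\frac{d-d_*}{d_*-1}=-c_0$. Setting $u_k=b_{-k}$, we have $u_{k+1}\le L(u_k)$ and $u_k>0>a$, so
$$
b_{-n}\le -c_0+(b_0+c_0)d_*^n<(b_0+c_0)d_*^n .
$$

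For the case $m=0$, the curve $O(t_1)$ is smooth and $\Cc_1=\uc$, which is connected. Applying the collapse-plus-covering decomposition to $\si_d:\Cc_0\to\Cc_1$, the covering part is a covering of a circle and so is connected up to the collapsed voids. The components of $\Cc_0$ are then separated only by the voids of level $0$, and these are exactly the collapsed critical voids, of which there are at most $d-d_*$. Hence $b_0\le d-d_*$, provided $d_0=d_*$ (or one argues directly with $d-d_0\le d-d_*$). Substituting, we get $b_0+c_0\le (d-d_*)\bigl(1+\frac1{d_*-1}\bigr)=c_0d_*$, and therefore $b_{-n}<c_0d_*^{n+1}$.

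The main obstacle is the bound on the number of collapsed voids. We need that at most $d-d_*$ voids at a given level have $\si_d$-image a point, and that each one raises the component count by at most one. We plan to justify this by length counting. Each such void has length at least $\frac1d$. The complement $\Cc_{-n-1}$ maps with generic degree $d_*$ onto a set covering length $|\Cc_{-n}|$. Together these force the total length of critical voids to be at most $\frac{d-d_*}{d}$, so there are at most $d-d_*$ of them. This is the same count implicitly used in Lemma \ref{l:bn0}.
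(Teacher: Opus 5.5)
Your outline is the paper's proof: the recursion $b_{-n-1}\le d_*b_{-n}+(d-d_*)$ from the collapse-then-cover factorization, then Lemma \ref{l:linear} with $L(x)=d_*x+d-d_*$ and fixed point $-c_0$, and, for $m=0$, the bound $b_0\le d-d_*$, which gives $b_0+c_0\le c_0d_*$. The paper simply asserts that at most $d-d_*$ voids are collapsed at each step. The length count you offer to justify this (the step you flag yourself) does not work as written.

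From the fact that $\Cc_{-n-1}$ maps with generic degree $d_*$ onto $\Cc_{-n}$ you only get $|\Cc_{-n-1}|=\frac{d_*}{d}|\Cc_{-n}|$. So all voids of $\Cc_{-n-1}$ together have length $1-\frac{d_*}{d}|\Cc_{-n}|$, which is strictly larger than $\frac{d-d_*}{d}$ because $|\Cc_{-n}|\le\frac{d_*}{d}<1$ for $n\ge 0$. The non-critical voids are never accounted for, and without them you only get about $d-1$ critical voids, which changes the constant $c_0$.

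At level $0$ with $m=0$ your count is fine. There $|\Cc_1|=1$, and every void of $\Cc_0$ is critical, since by Lemma \ref{l:voids} a non-critical one would map to a void of $\Cc_1=\uc$. So $b_0\le d-d_*$ holds.

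You also cannot patch the general case by matching non-critical voids one by one with voids of $\Cc_{-n}$. A non-critical void may be longer than $\frac1d$ without being a multiple of $\frac1d$ (for instance, voids exhausting a periodic-type major hole). Then $\si_d$ wraps it around the whole circle.

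The repair is to work on the whole circle.
\begin{enumerate}
\item Collapse the closures of the critical voids of $\Cc_{-n-1}$. Take the map that equals $\si_d$ elsewhere and sends each collapsed $[\al;\be]$ to $\si_d(\al)=\si_d(\be)$ (compare $\si^\A$ in Section \ref{ss:meas}). It is a continuous, orientation-preserving local homeomorphism of the quotient circle onto $\uc$, hence a covering of some degree $D$.
\item This map multiplies lengths by $d$, so the uncollapsed part of $\uc$ has total length $D/d$.
\item A generic point of $\Cc_{-n}$ already has $d_*$ preimages in $\Cc_{-n-1}$, so $D\ge d_*$. The excess, if any, comes from the wrapping non-critical voids.
\item Hence the critical voids have total length $\frac{d-D}{d}\le\frac{d-d_*}{d}$. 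Each has length a positive multiple of $\frac1d$, so there are at most $d-d_*$ of them.
\end{enumerate}
With this in place, the rest of your argument goes through exactly as in the paper.
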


\begin{proof}
The proof is similar to the proof of Lemma \ref{l:bn0}.
First we establish that $b_{-n}\le d-d_*+d_*b_{-n+1}$.
Here, the additive term $d-d_*$
  is an upper bound for the number of components in $\Ic_{-n}$.
Then we define $L(x)=d-d_*+d_*x$.
The $L$-fixed point is $\frac{d-d_*}{1-d_*}=-c_0$. Moreover, $-c_0<b_{-n}\le L(b_{-n+1})$.
Hence, by Lemma \ref{l:linear}, $b_{-n}\le -c_0+d^n_*(b_0+c_0)$. The rest of the lemma follows from the fact that, if $m=0$, then
$b_0\le d-d_*$ and, hence, $b_{-n}<(b_0+c_0)d_*^n\le (d-d_*+c_0)d_*^n=c_0\,d_*^{n+1}$.
\end{proof}

Let $\la_n$ be the maximal length of a component of $\Cc_n$ for some $n\le m$.

\begin{lem}
  \label{l:lan}
For $n\le m$, one has $\la_n\le (d-1)d^{n-m-1}$.
In particular, $\la_0\le (d-1)d^{-m-1}$.
Also, $\la_{-n}\le \la_0d^{-n}$ whenever $n\ge 0$.
If $m=0$, then $\la_0\le d_*d^{-1}$.
\end{lem}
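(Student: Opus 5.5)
The proof rests on two observations about connectors. First, every connector of level $m+1$ is the whole circle, since $\Cc_{m+1}=\uc$. Second, each level $n$ connector $C$ maps under $\si_d$ into a level $n+1$ connector $C'$ (Lemma \ref{l:Ccn}). We then compare $|C|$ with $|C'|$ and iterate downward from level $m+1$.

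\textbf{Key local estimate.} Let $C$ be a level $n$ connector with $n\le m$. We claim $C$ cannot be mapped onto all of $\uc$ with wrap-around. More precisely, we claim $|C|\le \frac{d-1}{d}$ when $n=m$. By the minimality of $m$, the set $\Cc_m$ is a proper subset of $\uc$, so its complement contains at least one void. Voids are open arcs whose $\si_d$-images either collapse (length $\frac id$) or are voids of higher level (Lemma \ref{l:voids}). The complement $\uc\sm\Cc_m$ consists of level $m$ voids. Each of these is either a critical void, of length a multiple of $\frac1d$, or it maps to a level $>m$ void, which is impossible since $\Cc_{m+1}=\uc$. Hence every level $m$ void has length $\ge \frac1d$, and any connector has length $\le 1-\frac1d=\frac{d-1}{d}$. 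This gives $\la_m\le (d-1)d^{-1}$.

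\textbf{Induction downward.} For $n<m$, take a level $n$ connector $C$. The image $\si_d(C)$ lies in a single level $n+1$ connector $C'$. We need $\si_d$ to be injective on $C$ apart from endpoints, i.e.\ $|C|\le\frac1d$. Alternatively, we can argue that a level $n$ connector contains no critical void in its interior (it is connected, being a component). Then $\si_d|_C$ is expanding by factor $d$ and does not wrap, which gives $|C|\le |C'|/d$. To justify that $C$ does not wrap, note that if $|C|>\frac1d$ then $\si_d(C)$ would cover the whole circle, forcing $C'=\uc$. That contradicts $n+1\le m$, because $\Cc_{n+1}\subset \Cc_{m}\neq\uc$; here I use the monotonicity $\Cc_{n}\subset\Cc_{n+1}$, which is clear since rays reaching $O^*(t_n)$ also reach $O^*(t_{n+1})$. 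Iterating yields $\la_n\le \la_m d^{n-m}\le (d-1)d^{n-m-1}$. The bound $\la_{-n}\le\la_0d^{-n}$ is the same argument applied below level $0$.

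\textbf{Case $m=0$.} Here we need a sharper count. Level $0$ voids all have length a positive multiple of $\frac1d$, as shown above. Their total length equals $\frac{d-d_*}{d}$, because $\si_d:\Cc_0\to\uc$ has degree $d_0=d_*$ and $|\Cc_0|=d_*/d$. A single connector thus has length at most $|\Cc_0|=d_*d^{-1}$.

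\textbf{Main obstacle.} The delicate point is the non-wrapping claim, namely ensuring that connectors contain no collapsed critical arcs. I expect to handle it via the nesting of voids from Lemma \ref{l:voids} together with the monotonicity of $\Cc_n$.
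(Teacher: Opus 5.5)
Your proof is correct and follows the paper's route. The ingredients are the same: a base bound $\la_m\le (d-1)d^{-1}$; the one-step contraction $\la_n\le d^{-1}\la_{n+1}$ for $n<m$, iterated downward; and $\la_0\le|\Cc_0|=d_*d^{-1}$ when $m=0$. The paper only calls the contraction step ``easy to see''. Your covering argument (a level $n$ connector of length $\ge\frac1d$ would force $\Cc_{n+1}=\uc\subset\Cc_m\ne\uc$) already proves it completely, so the nesting of voids you mention under ``main obstacle'' is not needed.

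The only variation is the base case. The paper bounds $\la_m$ by the total length $\mu_m=d_md^{-1}\le(d-1)d^{-1}$. You instead use that every level $m$ void must be critical and so has length at least $\frac1d$.
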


\begin{proof}
Evidently, $\la_m\le d_m d^{-1}\le (d-1)d^{-1}$.
It is easy to see that $\la_n\le d^{-1}\la_{n+1}$ for $n<m$.
Induction by $n$ from $n=m$ downwards
using the above inequality completes the proof.
When $m=0$, the upper bound on $\la_0$ is at most
 the total length of $\Cc_0$, and the latter is bounded above by $d_*d^{-1}$
 (since $\si_d$ is generically $d_*$-to-one, and it expands the lengths by the factor of $d$).
\end{proof}

Finally, we need to estimate the total length $\mu_n$ of $\Cc_n$.

\begin{lem}\label{l:mun}
If $0<n<m$, then $\mu_n=d_nd^{-1}\mu_{n+1}$ and, hence, $$d_*d^{-1}\mu_{n+1}\le \mu_n\le (d-1)d^{-1}\mu_{n+1}.$$
Thus, if  $0\le n\le m$ then
$$
d_*^{m+1-n}d^{n-m-1}\le \mu_n\le (d-1)^{m+1-n}d^{n-m-1},
$$
and, in particular,
$$
d_*^{m+1} d^{-m-1}\le \mu_0\le (d-1)^{m+1}d^{-m-1}.
$$
Also, if $k>0$ then
$
\mu_{-k}=d^{-k}d_*^k\mu_0.
$
\end{lem}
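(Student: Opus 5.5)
The plan is to compute how Lebesgue measure transforms under $\si_d$ when $\si_d$ is restricted to $\Cc_n$. Recall that $\si_d$ expands lengths by the factor $d$ and that $\si_d(\Cc_n)=\Cc_{n+1}$ by Lemma \ref{l:Ccn}. Recall also that a generic point of $\Cc_{n+1}$ has exactly $d_n$ preimages in $\Cc_n$. Here "generic" excludes only the finitely many images of endpoints of collapsed voids, which form a null set. Then the change of variables formula gives
$$
d\,\mu_n=\int_{\Cc_{n+1}}\#\bigl(\si_d^{-1}(y)\cap\Cc_n\bigr)\,dy=d_n\,\mu_{n+1}.
$$
This yields $\mu_n=d_nd^{-1}\mu_{n+1}$.

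To justify the preimage count, I would use the local structure of $\si_d$ near $\Cc_n$. The decomposition in the proof of Lemma \ref{l:bn0} represents $\si_d|_{\Cc_n}$ as the collapse of voids of length $i/d$ followed by a degree $d_n$ covering. So off finitely many points the count is exactly $d_n$. In the dynamical plane this corresponds to $f$ mapping the region between $O^*(t_n)$ and $O^*(t_{n+1})$ onto the next one with degree $d_n$. For $0\le n\le m$ we have $d_*\le d_n\le d-1$; the upper bound holds because $\Cc_n\ne\uc$ for $n\le m$ by the minimality of $m$. This gives the two-sided bound $d_*d^{-1}\mu_{n+1}\le\mu_n\le(d-1)d^{-1}\mu_{n+1}$.

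Next I would iterate this bound from $n$ up to $m$ and close it with $\mu_{m+1}=1$, which holds since $\Cc_{m+1}=\uc$. This gives
$$
d_*^{m+1-n}d^{n-m-1}\le\mu_n\le(d-1)^{m+1-n}d^{n-m-1},
$$
and the case $n=0$ is the stated bound on $\mu_0$. For negative levels, $d_{-k}=d_*$ for $k\ge0$, as noted before Lemma \ref{l:linear}. Iterating $\mu_{-k}=d_*d^{-1}\mu_{-k+1}$ then gives $\mu_{-k}=d_*^kd^{-k}\mu_0$.

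The only genuinely delicate step is making the preimage count $d_n$ rigorous. This means checking that the exceptional sets, namely endpoints and collapsed critical voids, have measure zero, and that no arc of $\Cc_{n+1}$ acquires extra preimages. I would handle this through the degree of $f$ on the annulus bounded by $O^*(t_n)$, since the rays crossing $O^*(t_n)$ correspond under $f$ to rays crossing $O^*(t_{n+1})$ with multiplicity equal to that degree.
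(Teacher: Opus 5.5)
Your proposal is correct and follows the paper's proof. Like the paper, you use that $\si_d\colon\Cc_n\to\Cc_{n+1}$ is generically $d_n$-to-one and multiplies lengths by $d$ to get $\mu_n=d_nd^{-1}\mu_{n+1}$, iterate the bounds $d_*\le d_n\le d-1$ up to $\mu_{m+1}=1$, and use $d_{-k}=d_*$ for negative levels; your change-of-variables formula and your justification of $d_n\le d-1$ only make explicit what the paper asserts without comment (the underlying reason is that $d_n=d$ would put every critical point inside $O^*(t_n)$, which forces $\Cc_n=\uc$).
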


\begin{proof}
Using that $\si_d:\Cc_n\to\Cc_{n+1}$ is generically $d_n$-to-one for $0\le n\le m$ and that
$\si_d$ multiplies all lengths by $d$, we obtain that $\mu_n=d_nd^{-1}\mu_{n+1}$. On the other hand,
$d_*\le d_n\le d-1$. Hence,
$$d_*d^{-1}\mu_{n+1}\le \mu_n\le (d-1)d^{-1}\mu_{n+1}.$$
Applying these
inequalities $m+1-n$ times and using that $\mu_{m+1}=1$, we see that
$$
d_*^{m+1-n}d^{n-m-1}\le \mu_n\le (d-1)^{m+1-n}d^{n-m-1}.
$$
as claimed. The rest of the lemma immediately follows.
\end{proof}

\subsection{H\"{o}lder continuity and the Hausdorff dimension}
\label{s:Hol}
Section \ref{s:Hol} studies an abstract $\si_d$-invariant gap $G$ of degree $d_*$,
 without any regard to complex polynomials.
Recall that $\delta_*:=\log_d d_*$ so that $d_*=d^{\delta_*}$.
Below, we establish H\"{o}lder continuity of the map $\psi:=\psi_G$,
show that the Hausdorff dimension of $G'$ equals $\delta_*$, and estimate
the $\delta_*$-Hausdorff measure of $G'$.

Recall the concepts of the Hausdorff measure and
dimension (in that we follow \cite{Rog98,Fal03}).
Given a metric space $Z$ and
numbers $s\ge 0$ and $\e>0$, define
\def\Hmes{\mathrm{Hmes}}
$$
\Hmes^s_\e(Z)=\inf\left\{
\sum_{i=1}^{\infty} \diam(U_i)^s\mid \{U_i\} \hbox{
covers $Z$ with $\diam(U_i)\le \e$}
\right\}.
$$
Here, $\diam(U)$ means the diameter of a set $U$.
As $\e\searrow 0$ with fixed $Z$ and $s$, the value of $\Hmes^s_\e(Z)\in [0;+\infty]$ non-strictly increases;
 the limit $\Hmes^s(Z)$ of this value as $\e\to 0$ is called the \emph{$s$-Hausdorff measure of $Z$}.
It is either a non-negative real number or infinity.
Define the \emph{Hausdorff dimension} $\Hdim(Z)$ of $Z$ as
$$
\sup\{s\mid \Hmes^s(Z)=\infty\}=\inf\{s\mid \Hmes^s(Z)=0\}.
$$
The Hausdorff dimension of a metric space is well defined, and if $\Hmes^s(Z)$ is a positive number, then $\Hdim(Z)=s$.
Proposition \ref{p:Falc} below is a special case of \cite[Theorem 2.9]{Rog98}, see also Proposition 2.2 of \cite{Fal03}
 for the Euclidean case.

\begin{prop}
\label{p:Falc}
Let metric spaces $(Z,\rho_Z)$, $(W,\rho_W)$,
 and a map $F:Z\to W$ be such that, for some $c$, $\e$, $\alpha>0$,
 if $\rho_Z(x,y)<\e,$ then $\rho_W(F(x),F(y))\le c\cdot \rho_Z(x,y)^\alpha$.
Then
$\Hmes^s(Z)\ge c^{-s/\al}\Hmes^{s/\al}(F(Z))$ for all $s\ge 0$.
In particular,
$\Hmes^\al(Z)\ge c^{-1}\cdot \Hmes^1(F(Z))$.
\end{prop}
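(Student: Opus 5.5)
The plan is the standard covering argument: push an almost optimal cover of $Z$ forward by $F$ and compare the sums term by term. Fix $s\ge 0$. If $\Hmes^s(Z)=\infty$ there is nothing to prove, so assume it is finite. Fix any $\eta$ with $0<\eta<\e$, where $\e$ is the constant from the hypothesis. The strict inequality $\eta<\e$ matters, because the Hölder estimate is only assumed for pairs with $\rho_Z(x,y)<\e$.

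\emph{Step 1: images of small sets.} Let $\{U_i\}$ be any cover of $Z$ with $\diam(U_i)\le\eta$; we may assume $U_i\subset Z$. For $x,y\in U_i$ we have $\rho_Z(x,y)\le\diam(U_i)\le\eta<\e$. The hypothesis then gives $\rho_W(F(x),F(y))\le c\,\rho_Z(x,y)^\al\le c\,\diam(U_i)^\al$. Taking the supremum over $x,y\in U_i$ yields
$$
\diam(F(U_i))\le c\,\diam(U_i)^\al\le c\,\eta^\al=:\eta'.
$$

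\emph{Step 2: comparison of sums.} The sets $F(U_i)$ cover $F(Z)$ and have diameters at most $\eta'$. Raising the bound of Step 1 to the power $s/\al$ and summing gives
$$
\Hmes^{s/\al}_{\eta'}(F(Z))\le \sum_i \diam(F(U_i))^{s/\al}\le c^{s/\al}\sum_i\diam(U_i)^s .
$$
Taking the infimum over all such covers, we get $\Hmes^{s/\al}_{\eta'}(F(Z))\le c^{s/\al}\,\Hmes^s_\eta(Z)\le c^{s/\al}\,\Hmes^s(Z)$. The last inequality holds because $\Hmes^s_\eta$ increases to $\Hmes^s$ as $\eta$ decreases.

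\emph{Step 3: limit and special case.} Let $\eta\to 0$. Then $\eta'=c\eta^\al\to 0$, so the left-hand side tends to $\Hmes^{s/\al}(F(Z))$. This gives $\Hmes^{s/\al}(F(Z))\le c^{s/\al}\Hmes^s(Z)$, which is the claimed inequality after multiplying by $c^{-s/\al}$. Setting $s=\al$ gives the stated particular case $\Hmes^\al(Z)\ge c^{-1}\Hmes^1(F(Z))$.

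There is no real obstacle in this argument. The only points needing care are the choice $\eta<\e$ in Step 1, so that the Hölder bound applies to every pair of points in a covering set, and the observation that the scale $\eta'$ on the image side also tends to zero, which justifies passing to the limit in Step 3.
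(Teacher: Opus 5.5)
Your proof is correct and is the standard covering argument. The paper gives no proof of its own here; it cites Falconer's Proposition 2.2 and Rogers' Theorem 2.9 and says the argument "extends verbatim." That argument is yours: push a fine cover of $Z$ forward by $F$, use $\diam(F(U_i))^{s/\al}\le c^{s/\al}\diam(U_i)^s$, and let the mesh tend to zero. The cited Euclidean version assumes a global H\"older bound, and the only change needed for the paper's local hypothesis is taking the mesh $\eta<\e$, which you did.
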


In \cite{Fal03}, the proof is given for Euclidean spaces, but the argument
 extends verbatim to arbitrary metrics.

Let us now return to the study of the map $\psi:=\psi_G$ and the Hausdorff dimension of $G'$
 (recall that $\psi$ collapses all holes of $G$ to points, and semi-conjugates $\si_d|_{G'}$ with $\si_{d_*}$).
By Theorem \ref{t:dyn-holes}, there is a collection $\A$ of $d-d_*$ disjoint open arcs in
$\uc$, each of length $d^{-1}$, such that $G\subset G_\A$, and the set $G'_\A\sm G'$ is at most countable.
Therefore, without loss of generality, we may assume from now on that $G=G_\A$. It is easy to see that
$\psi_G=\psi_{G_\A}=\psi_{G_p}$, where $G_p$ is the perfect part of $G$.
Also, $\A$ coincides with the set of all major holes of $G$
 (for our specific choice of $G=G_\A$, all major holes are critical).

\begin{dfn}[Connectors in a combinatorial setting]
Previously, we defined the sets $\Cc_n$ for a complex degree $d$ polynomial $f$ and an
 invariant degree $d_*$ component $K^*$ of $K(f)$.
Now, we have an abstract invariant gap $G$ generated by a set $\A$ of arcs.
In this setting, we can also define the sets $\Cc_n$.
Namely, $\Cc_{-n}$, for $n\ge 0$, consists of all $x\in\uc$ such that $\si_d^k(x)\notin\bigcup\A$ for $k=0$, $\dots$, $n$.
\end{dfn}

Components of $\Cc_{-n}$ are called level $-n$ connectors, as in the polynomial setting.
Observe that all estimates of Section \ref{ss:holconn} are applicable to $\Cc_{-n}$, with $m=0$.

\begin{lem}\label{l:psi-distance}
Let $I$ be a connector of level $-n<0$. Then for every $j\le n$ we have $\psi\circ \si_d^j=\si^j_{d_*}\circ \psi$ on $I$.
Moreover, $|\psi(I)|\le d_*^{-n}$.
\end{lem}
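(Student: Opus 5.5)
The plan is an induction on $j$ for the conjugacy, and then a length count using the degree of $\psi$ for the bound. We are in the setting $G=G_\A$, so every major hole is critical, of length exactly $d^{-1}$, and is one of the $d-d_*$ arcs of $\A$.

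\emph{Conjugacy.} Let $I$ be a level $-n$ connector. By definition, for $0\le k\le n$ the set $\si_d^k(I)$ contains no point of $\bigcup\A$. Since $I$ is connected, so is $\si_d^k(I)$, and for $k\le n-1$ it lies in a single complementary arc of $\bigcup\A$, i.e. between two consecutive critical holes. Call such a complementary arc a \emph{basic interval}. On a basic interval $J$ the following hold:
\begin{itemize}
\item $J$ has length $<1$, so $\si_d$ is injective on $J$ and preserves orientation.
\item Every hole of $G$ inside $J$ is non-major, so $\si_d$ maps it onto a hole.
\item Every point of $G'\cap J$ maps to $G'$.
\end{itemize}
Hence $\si_d$ maps $\sim_G$-classes meeting $J$ into $\sim_G$-classes, and $\psi\circ\si_d=\si_{d_*}\circ\psi$ holds on all of $\ol J$, not only on $G'\cap J$. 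Here we use that $\psi$ is constant on the closure of each hole and that the semiconjugacy is already known on $G'$ (Theorem \ref{t:desc-gap}). I expect this extension from $G'$ to whole arcs to be the only delicate point. It should follow from monotonicity: $\psi$ is a monotone degree-one map of the circle, and both $\psi\circ\si_d$ and $\si_{d_*}\circ\psi$ are monotone on $J$ and agree on the endpoints of each hole. Applying this with $\si_d^k(I)$ for $k=0,\dots,j-1$ and composing gives $\psi\circ\si_d^j=\si_{d_*}^j\circ\psi$ on $I$ for all $j\le n$.

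\emph{Length bound.} Since $\si_d^k(I)$ stays inside one basic interval for $k<n$, $\psi(I)$ is an arc on which $\si_{d_*}^{k}$ is injective at every stage. Also
\[
\si_{d_*}^n(\psi(I))=\psi(\si_d^n(I))\subset\psi(J'),
\]
where $J'$ is the basic interval (or arc of $\Cc_0$) containing $\si_d^n(I)$. Every point of $\uc\sm\bigcup\A$ lies in $\Cc_0$, so $\psi(J')$ is an arc of the circle. Because $\si_{d_*}$ multiplies arc lengths by exactly $d_*$ on arcs where it is injective, we get
\[
d_*^n\,|\psi(I)|=|\si_{d_*}^n(\psi(I))|\le 1,
\]
which gives $|\psi(I)|\le d_*^{-n}$.

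It remains to check that the lengths really multiply exactly, which requires $|\si_{d_*}^k(\psi(I))|\le 1$ at each intermediate stage. This holds because $\si_{d_*}^k(\psi(I))=\psi(\si_d^k(I))$ is the $\psi$-image of a proper arc of $\uc$ missing the critical holes, and $\psi$ of such an arc is a proper arc. A sharper alternative is to note that $\psi$ maps each basic interval onto an arc of length $1/d_*$ between consecutive images of critical holes. Then $\si_{d_*}^{n-1}(\psi(I))$ has length at most $1/d_*$, which again gives $|\psi(I)|\le d_*^{-n}$.
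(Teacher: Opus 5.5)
\textbf{Conjugacy.} This part is fine and is the paper's argument: the identity holds pointwise on $G'$, and on the closure of every non-major hole both sides are constant. However, the bullet ``$|J|<1$, so $\si_d$ is injective on $J$'' is false. $\si_d$ is injective only on arcs shorter than $1/d$, and a basic interval can be much longer. For example, take $d=3$ and $\A=\{(1/3;2/3)\}$, so $d_*=2$. The only basic interval is $T=[2/3;1/3]$, of length $2/3$, and $\si_3$ wraps it twice around $\uc$. The pointwise argument does not need injectivity, so drop this bullet and the monotonicity fallback.

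\textbf{Length bound: this is where the genuine gap is.} Its entire content is that $\si_{d_*}^n$ is injective on $\psi(I)$ up to endpoints, and none of your three justifications establishes this.
\begin{enumerate}
\item ``$\si_d^k(I)$ stays inside one basic interval'' gives nothing, by the remark above.
\item The requirement $|\si_{d_*}^k(\psi(I))|\le 1$ is vacuous; what is needed is $|\si_{d_*}^k(\psi(I))|\le 1/d_*$ for $k<n$. Properness of $\si_{d_*}^{k+1}(\psi(I))$ would give this. But your claim that $\psi$ of a proper arc avoiding $\bigcup\A$ is proper fails exactly at the last stage $k=n$. In the example above, $I=[-1/9;1/9]$ is a level $-1$ connector with $\si_3(I)=T$, and $\psi(T)=\uc$ because $\psi$ collapses $[1/3;2/3]$. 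Here $\psi(I)=[-1/4;1/4]$ has length exactly $1/2=d_*^{-1}$. So the bound is sharp, and this boundary case cannot be detected by properness.
\item The ``sharper alternative'' is also false: $|\psi(T)|=1$ in this example. In general the numbers $|\psi(T)|$ over the $b_0\le d-d_*$ level $0$ connectors sum to $1$, so they cannot all equal $1/d_*$.
\end{enumerate}

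\textbf{A clean repair via lifts.} Let $\Psi:\R\to\R$ be a non-decreasing lift of $\psi$ with $\Psi(x+1)=\Psi(x)+1$.
\begin{itemize}
\item On a lift $\tilde J$ of a basic interval, the pointwise conjugacy says $\Psi(dx)-d_*\Psi(x)$ is a continuous integer-valued function, hence constant. Therefore $\Psi(dy)-\Psi(dx)=d_*(\Psi(y)-\Psi(x))$ for $x,y\in\tilde J$.
\item If $[\tilde x;\tilde y]$ lifts $I$, then for each $k\le n$ the interval $[d^k\tilde x;d^k\tilde y]$ projects onto $\si_d^k(I)$. This lies in a basic interval $\ne\uc$, so the lifted interval lies in a single lift of that basic interval.
\item Iterating gives $d_*^n(\Psi(\tilde y)-\Psi(\tilde x))=\Psi(d^n\tilde y)-\Psi(d^n\tilde x)\le\Psi(\tilde b)-\Psi(\tilde a)\le 1$. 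Here $[\tilde a;\tilde b]$ is the lift of $T$ containing $[d^n\tilde x;d^n\tilde y]$; it has length $<1$, so $\Psi(\tilde b)\le\Psi(\tilde a+1)$.
\item Since $|\psi(I)|\le\Psi(\tilde y)-\Psi(\tilde x)$, this gives $|\psi(I)|\le d_*^{-n}$.
\end{itemize}

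The paper instead observes that $\si_d^n$ is injective on $I$ and deduces that $\si_{d_*}^n$ is injective on $\psi(I)$ except possibly at the endpoints. That exceptional case is precisely what happens in the example: $\si_2$ sends $\pm 1/4$ to the same point.
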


\begin{proof}
The first claim
 uses the fact that, except for major holes, the $\si_d$-image of a hole is a hole.
Note that $\psi\circ\si_d^j=\si^j_{d_*}\circ\psi$ on $I\cap G'$, by the above mentioned
 property that $\psi$ semi-conjugates $\si_{d}$ with $\si_{d_*}$ on $G'$.
Also, all components of $I\sm G'$ are collapsed to points by both $\psi\circ\si_d^j$ and $\si_{d_*}^j\circ\psi$.
To prove the second claim, notice that $\si_d^n(I)$ lies in a level 0 connector $T$. By the above,
$\si_{d_*}^n(\psi(I))=\psi(\si_d^n(I))$ which implies that $\si_{d_*}^n|_{\psi(I)}$ is one-to-one (except possibly
for the endpoints). Therefore, $|\psi(I)|\le d_*^{-n}$.
\end{proof}

Complementary components of $\Cc_n$ are open arcs $U$ in $\uc$
with the property that $\si^k_d(U)\in\A$ for some $k\ge 0$ with $k\le n$ while $U$, $\dots$, $\si^{k-1}_d(U)$ are disjoint from $\bigcup\A$.
Such $U$ is called a \emph{level $-k$ hole}. If $n\ge 0$ then
the length of a level $-n\le 0$ hole is $d^{-n-1}$.
We may also refer to level $-k$ holes as \emph{level $-k$ voids}, since, in our current setting,
 voids (complementary components of $\Cc_{-k}$) coincide with holes of $G$.

We want to apply Proposition \ref{p:Falc} to the set $G'$.
To this end, we define a distance function $\rho$ on $\uc$ as follows.

\begin{dfn}\label{d:shortest}
For any circle arc $T$ let $|T|$ be the length of $T$.
For $x, y\in \uc$, let $Q_{xy}$ be the shortest circle arc between $x$ and $y$, and let $\rho(x, y)=|Q_{xy}|$.
\end{dfn}

Hausdorff dimension and Hausdorff measure in $\uc$ are understood in the sense of this metric.

\begin{lem}
  \label{l:psi-Hol}
If $x,y\in\uc$, then
$\rho(\psi(x),\psi(y))\le
 d_*^2\cdot \rho(x,y)^{\delta_*},$
and so $\psi$ is $\delta_*$-H\"older continuous.
\end{lem}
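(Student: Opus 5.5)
We compare $\rho(x,y)$ with the scale of the connectors given by Lemma \ref{l:lan} (with $m=0$) and then invoke Lemma \ref{l:psi-distance}. Throughout we work with $G=G_\A$, so $\la_0\le d_*d^{-1}<1$ and $\la_{-n}\le \la_0 d^{-n}\le d^{-n}$. Every void of level $-k\le 0$ has length exactly $d^{-k-1}$.

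Fix $x,y$ and let $Q=Q_{xy}$ be the shortest arc, with $r=|Q|=\rho(x,y)\le 1/2$. Since $\psi$ is monotone and collapses holes, $\rho(\psi(x),\psi(y))\le|\psi(Q)|$, which is at most $|\psi(Q\cap\Cc_{-n})|$ summed over the pieces. We distinguish two cases.

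\emph{Large arcs.} Suppose $r\ge d^{-2}$. Then $\rho(\psi(x),\psi(y))\le 1/2$ trivially. On the other hand $d_*^2 r^{\delta_*}\ge d_*^2 d^{-2\delta_*}=1$, so the inequality holds.

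\emph{Small arcs.} Suppose $r<d^{-2}$, and choose $n\ge 1$ with $d^{-n-2}\le r<d^{-n-1}$. We claim that $Q$ meets at most two level $-n$ connectors. Between two consecutive connectors of level $-n$ lies a void of level $\ge -n$, whose length is $d^{-k-1}\ge d^{-n-1}>r$. Hence $Q$ cannot contain such a void entirely, and so it meets at most two connectors. This already settles the claim, with at most one gap partially covered. By Lemma \ref{l:psi-distance}, each of these connectors $I$ has $|\psi(I)|\le d_*^{-n}$, while the voids are collapsed by $\psi$. Therefore
$$\rho(\psi(x),\psi(y))\le 2d_*^{-n}=2d_*^{2}\,d_*^{-n-2}=2d_*^2 (d^{-n-2})^{\delta_*}\le 2d_*^2 r^{\delta_*}.$$

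The main obstacle is removing the factor $2$. If $Q$ meets two level $-n$ connectors, it straddles a single void $V$ of level $-k$ with $k\le n$. I would then show that the two partial pieces are images under $\psi$ of arcs lying on opposite sides of $V$, and that their $\psi$-images are adjacent in $\uc$ and contained in one connector image of the next coarser level. One route is to push forward by $\si_d^{k}$, under which $V$ maps into $\A$ and $\psi$ conjugates to $\si_{d_*}^k$. Alternatively, I would refine the choice of $n$: take $n$ with $d^{-n-1}\le r<d^{-n}$, so that $Q$ meets at most two level $-(n-1)$ connectors. If it meets one, the bound $d_*^{-n+1}\le d_*^{2}r^{\delta_*}$ follows. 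If it meets two, the separating void has length $\ge d^{-n}>r$, which is impossible, so only the one-connector case occurs. The exponent bookkeeping gives a bound of $d_*^{-n+1}\le d_*\,(d^{-n-1})^{\delta_*}d_*^{2}\cdot d_*^{-2}\le d_*^2 r^{\delta_*}$, so the constant $d_*^2$ is comfortably achieved. With this finer choice of $n$ I expect the straddling case to disappear and the stated estimate to follow directly; $\delta_*$-Hölder continuity is then immediate.
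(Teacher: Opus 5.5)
Your final argument is correct and is essentially the paper's. Since every void of level $\ge -n+1$ is longer than $r$, the arc $Q_{xy}$ contains none of them, so it meets only one level $-n+1$ connector, and Lemma \ref{l:psi-distance} gives $\rho(\psi(x),\psi(y))\le d_*^{-n+1}\le d_*^2\rho(x,y)^{\delta_*}$; the only difference is that the paper picks $n$ from the largest hole contained in $Q_{xy}$, whereas you pick it from the size of $r$. Note that your first small-arc step already yields at most \emph{one} level $-n$ connector by your own reasoning, because any void lying between two points of $Q_{xy}$ is contained in $Q_{xy}$; so the factor $2$ never arises, the $\si_d^k$ detour is unnecessary, and your ``refined'' choice of $n$ is the original choice with the index shifted by one.
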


\begin{proof}
If
 $Q_{x,y}$ contains a level $0$ hole, then $\rho(x,y)\ge d^{-1}$ and
$$
d_*^2\cdot \rho(x,y)^{\delta_*}\ge d_*^2\cdot d^{-\de_*}=d_*>\rho(\psi(x),\psi(y))
$$
Hence we may assume that $n>0$
is such that $-n$ is the greatest level of a hole $U\subset  Q_{x,y}$.
Then $|U|=d^{-n-1}\le \rho(x,y)$.
On the other hand, 
 there is only one level $-n+1$ connector $T$ such that $Q_{x,y}\cap T\ne\0$;
 indeed, otherwise, if there were two or more such connectors, then $Q_{x,y}$ would contain a level $\ge -n+1$ hole
 (a contradiction with our assumption).
By Lemma \ref{l:psi-distance}, the $\rho$-diameter of $\psi(T)\supset \psi(Q_{x,y})$
 is at most $d_*^{-n+1}$, hence
 $\rho(\psi(x),\psi(y))\le d_*^{-n+1}$.
Thus,
$$
\rho(\psi(x),\psi(y))\le d_*^{-n+1}=d_*^2\cdot d_*^{-n-1}=
d_*^2\cdot d^{\delta_*(-n-1)}
\le d_*^2\rho(x,y)^{\delta_*},
$$
as claimed.
\end{proof}

One can compute $\Hdim(G')$ using Proposition \ref{p:Falc}.

\begin{thm}
\label{t:HdimG}
The following formulas hold:

\begin{enumerate}
\item $\Hdim(G')=\delta_*=\log_d d_*<1$;

\item $\Hmes^{\delta_*}(G'\cap [a;b])\ge d_*^{-2} |[\psi(a);\psi(b)]|$
for any arc $[a;b]\subset\uc$ and, in particular, $\Hmes^{\delta_*}(G')\ge d_*^{-2}$.

\item $\Hmes^{\delta_*}(G')\le c_0\,d_*^{\delta_*}$.

\end{enumerate}

\end{thm}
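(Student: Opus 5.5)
The plan is to prove (2) first, then (3), and read off (1) from the two bounds. Throughout, $G=G_\A$ as arranged above, the metric is $\rho$, and we use the fact that $\psi$ is continuous and monotone (it collapses hole closures), so $\psi(G'\cap[a;b])=[\psi(a);\psi(b)]$.

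For (2), apply Proposition~\ref{p:Falc} with $Z=G'\cap[a;b]$ (with the metric $\rho$), $W=\uc$ and $F=\psi$. Lemma~\ref{l:psi-Hol} supplies the hypothesis with $c=d_*^2$ and $\al=\delta_*$, valid for all pairs of points. The ``in particular'' part of the proposition then gives
$$
\Hmes^{\delta_*}(G'\cap[a;b])\ge d_*^{-2}\,\Hmes^1(\psi(G'\cap[a;b])).
$$
Next we identify the image. Since $\psi$ maps $G'\cap[a;b]$ onto the arc $[\psi(a);\psi(b)]$, and $\Hmes^1$ of an arc equals its length in the normalized metric $\rho$ (lengths are measured so that $|\uc|=1$), we obtain the claimed inequality. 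Taking the whole circle, $\psi(G')=\uc$ has length $1$, which gives $\Hmes^{\delta_*}(G')\ge d_*^{-2}$. The one point to treat carefully is the normalization of $\Hmes^1$ on $\uc$. With $\diam$ computed via $\rho$, short arcs have diameter equal to their length, so $\Hmes^1$ agrees with Lebesgue probability measure.

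For (3), cover $G'$ by the level $-n$ connectors, i.e.\ the components of $\Cc_{-n}$ (with $m=0$ in the combinatorial setting). By Lemma~\ref{l:bn}, there are fewer than $c_0d_*^{n+1}$ of them. By Lemma~\ref{l:lan}, each has length at most $\la_0d^{-n}\le d_*d^{-n-1}$, and this tends to $0$. Hence
$$
\Hmes^{\delta_*}_{\e}(G')\le c_0d_*^{n+1}\,(d^{-n-1}d_*)^{\delta_*}=c_0d_*^{n+1}d_*^{-n-1}d_*^{\delta_*}=c_0d_*^{\delta_*},
$$
using $d^{\delta_*}=d_*$. Letting $n\to\infty$ (so $\e\to0$) gives (3).

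The main obstacle is checking that the crude bound $\la_{-n}\le d_*d^{-n-1}$ is legitimate for every connector, rather than the sharper but $n$-dependent bounds. Lemma~\ref{l:lan} with $m=0$ gives exactly this, so the count and the length bound combine cleanly. Finally, (1) follows: the measure $\Hmes^{\delta_*}(G')$ lies in $[d_*^{-2},\,c_0d_*^{\delta_*}]$, hence is positive and finite, so $\Hdim(G')=\delta_*$, and $\delta_*=\log_d d_*<1$ because $d_*<d$.
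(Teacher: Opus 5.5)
Your proposal is correct and follows the paper's proof essentially step for step: part (2) comes from Proposition~\ref{p:Falc} applied to $\psi$ with the H\"older constant $d_*^2$ of Lemma~\ref{l:psi-Hol}, and part (3) comes from covering $G'$ by level $-n$ connectors, counted by Lemma~\ref{l:bn} and measured by Lemma~\ref{l:lan}, with (1) following because $\Hmes^{\delta_*}(G')$ is positive and finite. Your explicit identification $\psi(G'\cap[a;b])=[\psi(a);\psi(b)]$ and your remark on the normalization of $\Hmes^1$ on $\uc$ supply details the paper leaves implicit.
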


Recall that $c_0=\frac{d-d_*}{d-1}$ (equation (\ref{e:c0})).

\begin{proof}
By Lemma \ref{l:psi-Hol}, the function $\psi: {G'}\to\uc$ satisfies the assumptions of Proposition \ref{p:Falc}
 with $c=d_*^2$ and $\alpha=\delta_*$.
Take $s=\delta_*$.
By Proposition \ref{p:Falc}, $\Hmes^1(\uc)=1\le d_*^2\,\Hmes^{\delta_*}({G'})$,
which implies that $\Hmes^{\delta_*}({G'})\ge d_*^{-2}$.
The inequality for $[a;b]$ is similar.
By definition, this yields that $\Hdim({G'})\ge \delta_*$.

To estimate $\Hmes^{\delta_*}({G'})$ from above, we use the covering
 of $G'$ by all level $-n$ connectors.
By Lemma \ref{l:bn}, there are at most $c_0\,d_*^{n+1}$
 connectors of level $-n$ each of which is no longer than $d^{-n}\la_0$, by Lemma \ref{l:lan}.
Hence the sum of the lengths of these connectors raised to the power $\delta_*$ is at most
$(d^{-n}\la_0)^{\delta_*} c_0\,d_*^{n+1}=\lambda_0^{\delta_*} c_0\, d_*$.
It remains to use that $\la_0\le d_*d^{-1}$ (Lemma \ref{l:lan}) to obtain
inequality (3). We conclude that $\Hdim({G'})=\delta_*=\log_d{d_*}<1$.
\end{proof}

Theorem \ref{t:HdimG} gives all we need to deduce Theorem \ref{t:main-comb}.

\begin{proof}[Proof of Theorem \ref{t:main-comb}]
In fact, Theorem \ref{t:HdimG} includes a reformulation of Theorem \ref{t:main-comb}:
 the Hausdorff dimension of $G'$ is found in (1); the lower bound on the Hausdorff measure is contained in (2),
 while the upper bound is (3).
\end{proof}

\subsection{Measures}
\label{ss:meas}
Our setup in Section \ref{ss:meas} is as follows:
 $G$ is the renormalization gap of $(f,K^*)$,
 and $\Cc_i$ is the union of all level $i$ connectors of $(f,K^*)$.
We cannot assume anymore that all escaping critical points of $f$ have the same escape rate.
Recall that $\psi_G=\psi$ is a factor map of $\uc$ to $\uc/\sim_G$ semi-conjugating $\si_d|_{G'}$ and $\si_{d_*}$
with $d_*>1$.
Without loss of generality, we may assume that $0\in G'$ and $\psi(0)=0$.

\begin{dfn}
\label{d:psiex}
Let $B=\bigcup_{j=1}^k I_j\ne \uc$ be the union of disjoint non-degenerate closed arcs.
Define
$\psi_B:\uc\to\uc$
 as the unique continuous function such that $\psi_B(0)=0$, $\psi'(x)=1/|B|$ on the interior of $B$ and $\psi'(x)=0$ on
$\uc \sm B$. Let $\psi_i$ denote the map $\psi_{\Cc_i}:\uc\to \uc$.
\end{dfn}

It is easy to see that there is indeed a unique function $\psi_B$ with the listed properties,
 for any given $B$;
 in particular, $\psi_i$ is well defined.

\begin{dfn}\label{d:l_n}
Define $\lf_i$ as the measure on $\uc$ which is the pullback of the Lebesgue measure on $\uc$ under $\psi_i$
(by definition of a pullback measure, this means that $\lf_i([\al;\be])=|\psi_i([\al,\be])|$ for all $\al$, $\be\in\uc$).
Also, set $\lf_\infty$ to denote the standard Lebesgue probability measure on $\uc$.
Thus, $\lf_\infty(E)$, where $E$ is an arc of $\uc$, is just an alternative notation for $|E|$.
By the assumptions, $\lf_\infty=\lf_n$ for all $n>m:=\max(\chi)$.
\end{dfn}

Consider all major voids of level $0$.
In each such void, choose and fix an arc $U$ of length $\frac kd$,
 for the largest possible integer value of $k$.
Form the set $\A$ of all such arcs $U$; this set will play a certain role
 in the upcoming constructions.
Note that the gaps $G$ and $G_\A$ have the same perfect part,
 since, evidently, $G\subset G_\A$, and $G$, $G_\A$ have the same degree.

Let $\si^\A$ be the self-map of the circle that equals $\si_d$ on the complement of $\bigcup\A$
and collapses the closures of all arcs from $\A$ to points.
Then $\psi_{-n+1}\circ\si^\A=\si_{d_*}\circ\psi_{-n}$ for all $n>0$.
Indeed, the maps on both sides collapse the closures of all components of $\uc\sm\Cc_{-n}$ to points,
and otherwise multiply all lengths by a uniform factor.
Now, equality $\psi_{-n+1}\circ\si^\A=\si_{d_*}\circ\psi_{-n}$
 follows from the uniqueness of the map $\psi_n$ based upon
Definition \ref{d:psiex}.
Hence, the following diagram is commutative:
\begin{equation}\label{e:dia1}
\xymatrix{
\dots \ar[r]^{\si^\A}    & \uc\ar[r]^{\si^\A} \ar[d]_{\psi_{-n}} & \uc \ar[d]_{\psi_{-n+1}}\ar[r]^{\si^\A}  & \dots \ar[r]^{\si^\A}
 & \uc\ar[d]^{\psi_{-1}}\ar[r]^{\si^\A} & \uc\ar[d]^{\psi_0} \\
\dots \ar[r]_{\si_{d_*}}    & \uc\ar[r]_{\si_{d_*}}                   & \uc \ar[r]_{\si_{d_*}}
& \dots \ar[r]_{\si_{d_*}}       & \uc   \ar[r]_{\si_{d_*}}           & \uc}
\end{equation}
Observe that the derivative $\psi'_{-n}$ of $\psi_{-n}$ is constant outside of the flat spots of $\psi_{-n}$. By
 diagram \eqref{e:dia1} and the chain rule, $\psi'_{-n+1}\cdot d=d_*\cdot \psi'_{-n}$.
Hence $\psi'_{-n}/\psi'_{-n+1}=d/d_*$ with derivatives taken at the appropriate points.
If $Z$ is a measurable set \emph{contained in the support of} $\lf_{-n}$, then, by definition,
 $\lf_{-n}(Z)=|\psi_{-n}(Z)|=\psi'_{-n}\cdot |Z|$ with $\psi'_{-n}$ being the slope of all non-horizontal pieces of the graph.
On the other hand, $\lf_{-n+1}(Z)=|\psi_{-n+1}(Z)|=\psi'_{-n+1}\cdot |Z|$.
Hence $\lf_{-n}(Z)=\lf_{-n+1}(Z)\cdot \psi'_{-n}/\psi'_{-n+1}=\lf_{-n+1}(Z)\cdot d/d_*$.

\begin{lem}
  \label{l:lk-scale}
For any integer $n\ge 0$ and any measurable subset $Z\subset \Cc_{-n}$
one has $\lf_{-n}(Z)=(\frac {d}{d_*})^n \lf_0(Z)$.
Moreover, if $\si_d^{n}|_Z$ is injective on the complement of finitely many points in $Z$,
then $d^{n}\lf_0(Z)=\lf_0(\si_d^{n}(Z))$ and $\lf_{-n}(Z)={d_*^{-n}} \lf_0(\si_d^{n}(Z))$.
\end{lem}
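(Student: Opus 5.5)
The first claim will follow by iterating the one-step identity established just before the statement. There it is shown that if $Z$ is contained in the support of $\lf_{-k}$, then $\lf_{-k}(Z)=\frac{d}{d_*}\lf_{-k+1}(Z)$. The support of $\lf_{-k}$ is $\Cc_{-k}$, because $\psi_{-k}$ is flat exactly off $\Cc_{-k}$. I also need $\Cc_{-k}\subset\Cc_{-k+1}$. This nestedness follows from the definition: a ray towards $K^*$ that reaches $O^*(t_{-k})$ must cross $O^*(t_{-k+1})$.

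Take a measurable $Z\subset\Cc_{-n}$. Then $Z\subset\Cc_{-k}$ for every $0\le k\le n$, so the one-step identity applies at every level. Inducting from $k=n$ up to $k=1$ gives
$$
\lf_{-n}(Z)=\Bigl(\frac d{d_*}\Bigr)^n\lf_0(Z).
$$

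For the second claim, note that $\lf_0$ restricted to $\Cc_0$ is Lebesgue measure times the constant slope $\psi_0'=1/|\Cc_0|$. Also $Z\subset\Cc_{-n}$ implies $\si_d^n(Z)\subset\Cc_0$, since $\si_d(\Cc_{-k})\subset\Cc_{-k+1}$ by Lemma \ref{l:Ccn}. So both $\lf_0(Z)$ and $\lf_0(\si_d^n(Z))$ equal $|\Cc_0|^{-1}$ times the Lebesgue length of the respective set. The map $\si_d^n$ multiplies Lebesgue measure by $d^n$ on any set where it is injective. Removing the finitely many exceptional points changes neither measure, and injectivity off them gives $|\si_d^n(Z)|=d^n|Z|$. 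Multiplying by $|\Cc_0|^{-1}$ yields $\lf_0(\si_d^n(Z))=d^n\lf_0(Z)$.

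Substituting this into the first claim gives
$$
\lf_{-n}(Z)=\Bigl(\frac d{d_*}\Bigr)^n d^{-n}\lf_0(\si_d^n(Z))=d_*^{-n}\lf_0(\si_d^n(Z)).
$$

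The only delicate point is that the support and slope of $\psi_{-k}$ are as claimed. This holds because $\psi_{\Cc_{-k}}$ is defined in Definition \ref{d:psiex} with derivative $1/|\Cc_{-k}|$ on the interior of $\Cc_{-k}$ and $0$ elsewhere. The measure-zero boundary points of the connectors do not matter. The earlier derivation of the one-step identity via diagram \eqref{e:dia1} is taken as given.
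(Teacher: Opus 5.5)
Your proposal is correct and follows essentially the same route as the paper: iterate the one-step identity $\lf_{-k}(Z)=\frac{d}{d_*}\lf_{-k+1}(Z)$ along the nested sets $\Cc_{-n}\subset\dots\subset\Cc_0$, then use that $\lf_0$ has constant density on $\Cc_0\supset\si_d^n(Z)$ to get $\lf_0(\si_d^n(Z))=d^n\lf_0(Z)$, and combine the two. The paper calls the second claim ``immediate''; you supply the supporting details it leaves implicit (the support of $\lf_{-k}$, the nestedness of the $\Cc_{-k}$, and the constant density of $\lf_0$ on $\Cc_0$).
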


\begin{proof}
The first claim follows from the fact that $\lf_{-n}(Z)=\lf_{-n+1}(Z)\cdot d/d_*$ for every $n\ge 0$. The second claim is immediate.
Hence, if $\si_d^n|_Z$ is injective outside of a finite subset, then
 ${d_*^{-n}} \lf_0(\si_d^{n}(Z))={d_*^{-n}} d^{n}\lf_0(Z)=\lf_{-n}(Z)$ (the last equality follows from the first claim of the lemma).
\end{proof}

We can now establish the uniform convergence of the sequence $(\psi_{-n})$ as $n\to \infty$.

\begin{lem}
\label{l:psi-n}
The sequence of maps $\psi_{-n}:\uc\to\uc$ converges uniformly as $n\to \infty$.
Moreover, the uniform distance between $\psi_{-n}$ and $\psi_{-n+1}$ is at most $d_*^{-n}$.
\end{lem}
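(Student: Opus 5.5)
The plan is to prove the distance bound $\sup_{x}\rho(\psi_{-n}(x),\psi_{-n+1}(x))\le d_*^{-n}$ directly. Uniform convergence then follows because the bounds form a summable geometric series ($d_*>1$), so $(\psi_{-n})$ is uniformly Cauchy.

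The main tool is the measures. By definition, $\psi_i(x)$ is the $\lf_i$-measure of the arc $[0;x]$. Hence the difference $\psi_{-n}(x)-\psi_{-n+1}(x)$ equals $\lf_{-n}([0;x])-\lf_{-n+1}([0;x])$, both taken modulo $1$. Since $\lf_{-n}$ and $\lf_{-n+1}$ are probability measures, it suffices to compare them connector by connector. Let $T$ be a level $-n+1$ connector. I claim $\lf_{-n}(T)=\lf_{-n+1}(T)$. By Lemma~\ref{l:Ccn} and the diagram \eqref{e:dia1}, $\si^\A$ maps $T$ into a level $-n+2$ connector, and $\psi_{-n+1}\circ\si^\A=\si_{d_*}\circ\psi_{-n}$. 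Using the scaling identity of Lemma~\ref{l:lk-scale}, I then check inductively on $n$ that both measures of $T$ are determined by the image under $\si^\A$ and scale the same way. Concretely, both equal $d_*^{-(n-1)}$ times the $\lf_0$-measure of the image of $T$ under the appropriate iterate, which has the form $\si_d^{n-1}$ modulo collapsed arcs. The base case $n=1$ is exactly the level $0$ normalization, with total mass $1$ on both sides.

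Granting this equality, let $x\in\uc$, and let $T$ be the level $-n+1$ connector containing $x$ (or the last one before $x$ if $x$ lies in a void). The arc $[0;x]$ is a union of whole level $-n+1$ connectors and voids, plus the piece $[a;x]\cap T$, where $a$ is the left endpoint of $T$. On the whole connectors the two measures agree, and the voids carry zero mass for both, since $\Cc_{-n}\subset\Cc_{-n+1}$. So the difference is at most $\max(\lf_{-n}(T),\lf_{-n+1}(T))=\lf_{-n+1}(T)$. By Lemma~\ref{l:psi-distance}, or rather its analogue for $\psi_{-n+1}$ obtained by iterating \eqref{e:dia1} $n-1$ times, $\psi_{-n+1}(T)$ injects under $\si_{d_*}^{n-1}$ into the circle. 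This gives $\lf_{-n+1}(T)\le d_*^{-n+1}$. To obtain the sharper $d_*^{-n}$, I refine one more level. Inside $T$, the measure $\lf_{-n}$ is concentrated on the level $-n$ connectors in $T$. Each of these has $\lf_{-n}$-measure at most $d_*^{-n}$, by the same injectivity argument one level deeper. On each such sub-connector and each void between them, the cumulative functions of $\lf_{-n}$ and $\lf_{-n+1}$ deviate from each other by at most the mass of one level $-n$ piece, because $\lf_{-n+1}$ is uniform on $T$ while $\lf_{-n}$ is uniform on the sub-connectors with the same total.

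The main obstacle is the equality $\lf_{-n}(T)=\lf_{-n+1}(T)$ for connectors: one must make sure that collapsing arcs of $\A$ inside $T$ does not change total masses. If the direct measure computation gets messy, I would fall back on the alternative route through the conjugacy. Both $\psi_{-n}$ and $\psi_{-n+1}$ are lifts, via $\si_{d_*}^{n-1}$, of maps at level $0$ and $-1$ respectively. The case $n=1$ gives $\rho(\psi_{-1},\psi_0)\le d_*^{-1}$ trivially, using $\lf_0(T)\le 1/d_*$ for a level $0$ connector. Pulling back along the diagram, where $\si_{d_*}$ is expanding by $d_*$ on the relevant arcs, then divides the distance by $d_*$ at each step.
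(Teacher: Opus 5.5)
Your argument rests on the step ``on the whole connectors the two measures agree'', i.e. $\lf_{-n}(T)=\lf_{-n+1}(T)$ for every level $-n+1$ connector $T$. \textbf{This step is false.}

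Since $\mu_{-n}=\frac{d_*}{d}\mu_{-n+1}$, the claim would mean that every connector keeps exactly the fraction $d_*/d$ of its length at the next level, and nothing forces this. Your scaling argument breaks as follows. With $S=\si_d^{n-1}(T)$ one gets
\begin{itemize}
\item $\lf_{-n+1}(T)=d_*^{-(n-1)}\lf_0(S)$, but
\item $\lf_{-n}(T)=d_*^{-(n-1)}\lf_{-1}(S)$, not $d_*^{-(n-1)}\lf_0(S)$.
\end{itemize}
Moreover, $S$ is in general only a subarc of a level $0$ connector (connectors map into, not onto, connectors; Lemma~\ref{l:Ccn}). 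On such subarcs $\lf_{-1}$ and $\lf_0$ differ.

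Here is a concrete counterexample. Take $d=3$, $d_*=2$, $\A=\{(1/5;8/15)\}$; this is the combinatorics of a cubic whose escaping critical point is hit by the rays of arguments $1/5$ and $8/15$. Then $\Cc_0=[8/15;1/5]$, and
\[
\Cc_{-1}=[8/15;11/15]\cup[38/45;1/15]\cup[8/45;1/5]
\]
has total length $4/9$. For $T=[8/15;11/15]$, the map $\si_3$ sends $T$ injectively onto $S=[3/5;1/5]$, and $|S\cap\Cc_{-1}|=17/45$. Hence
\[
\lf_{-1}(T)=\frac{1/5}{4/9}=\frac{9}{20},\qquad \lf_{-2}(T)=\frac{17/135}{8/27}=\frac{17}{40}.
\]
Your base case $n=1$ can also fail once $\Cc_0$ has several components. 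So the distribution functions do not resynchronize at the ends of connectors, and the discrepancies can accumulate along $[0;x]$. Neither your $d_*^{-n+1}$ estimate nor its refinement to $d_*^{-n}$ follows.

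The fallback, as written, does not close the gap. Its base case uses $\lf_0(T)\le 1/d_*$ for level $0$ connectors, which fails in the same example: $\Cc_0$ is a single connector of $\lf_0$-mass $1$. Also, ``dividing by $d_*$'' requires control of the branch of $\si_{d_*}^{-1}$.

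The branch issue can be handled by lifting to $\R$ with all lifts fixing $0$. Continuity gives $d_*\tilde\psi_{-k}=\tilde\psi_{-k+1}\circ\tilde\si$ for $k\ge 1$, hence
\[
\sup|\tilde\psi_{-n}-\tilde\psi_{-n+1}|\le d_*^{-(n-1)}\sup|\tilde\psi_{-1}-\tilde\psi_0|\le d_*^{-n+1}.
\]
This proves uniform convergence, but only with the weaker constant.

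The missing idea for $d_*^{-n}$ is the one the paper uses: compare the maps at \emph{points}, not connector masses. If $x\in G'$ and $\si_d^n(x)=0$, then \eqref{e:dia1} places $\psi_{-k}(x)$ in $\si_{d_*}^{-n}(0)$, and this value does not change with $k$. For $k\ge n$ this follows from the diagram and monotonicity; counting covering multiplicities gives the same value for $k=n-1$. Since $\psi_{-n}$ and $\psi_{-n+1}$ are monotone and agree at all these anchors, for every $x$ both values lie in one arc between consecutive points of $\si_{d_*}^{-n}(0)$, which has length $d_*^{-n}$.
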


\begin{proof}
By the definitions and the commutativity of 
 diagram (\ref{e:dia1}),
 if $x\in G'$ is such that $\si_d^n(x)=0$
for some $n\ge 0$ then $\psi_{-m}(x)=\psi_{-n}(x)$ for any $m\ge n$.
The lemma now follows from the monotonicity of $\psi_{-n}$ and the fact that
the distance between two consecutive preimages of $0$ under $\si_{d_*}^k$ is $d_*^{-k}$.
\end{proof}

Recall that the map $\psi$ collapses the closures of all holes of $G$ to points,
fixes 0, and semi-conjugates $\si^\A$ with $\si_{d^*}$.

\begin{cor}
  \label{c:psilim}
The sequence $\psi_{-n}$ converges to $\psi$ as $n\to \infty$.
\end{cor}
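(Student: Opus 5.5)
The plan is to identify the uniform limit $\tilde\psi:=\lim_{n\to\infty}\psi_{-n}$, which exists by Lemma~\ref{l:psi-n}, with $\psi$. The characterization we use is the one recalled just before the statement: $\psi$ is the map that collapses the closures of all holes of $G$ to points, fixes $0$, and semi-conjugates $\si^\A$ with $\si_{d_*}$. We will show that $\tilde\psi$ has these same properties. We then argue that such a map is unique, or, more concretely, that $\tilde\psi$ and $\psi$ agree on a dense set of points and are both monotone and continuous.

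\textbf{Properties of $\tilde\psi$.} Each $\psi_{-n}$ is a continuous, monotone (degree one) circle map fixing $0$, so the uniform limit $\tilde\psi$ is also continuous, monotone, degree one and fixes $0$.

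\begin{itemize}
\item \emph{Holes are collapsed.} If $U$ is a hole of $G$, then $U$ is a complementary component of $G'=\bigcap\Cc_{-n}$. Since voids are nested or disjoint (Lemma~\ref{l:voids}), $U$ is an increasing union of voids $V_n\subset \uc\sm\Cc_{-n}$. Every $\psi_{-k}$ with $k\ge n$ is constant on $\ol{V_n}$, because $\Cc_{-k}\subset \Cc_{-n}$. Passing to the limit, $\tilde\psi$ is constant on each $\ol{V_n}$, and hence, by continuity and monotonicity, on $\ol U$.
\item \emph{Semi-conjugacy.} Let $k\to\infty$ in the commutative diagram~\eqref{e:dia1}. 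The relation $\psi_{-n+1}\circ\si^\A=\si_{d_*}\circ\psi_{-n}$ then gives $\tilde\psi\circ\si^\A=\si_{d_*}\circ\tilde\psi$, using uniform convergence and the continuity of $\si_{d_*}$.
\end{itemize}

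\textbf{Identification with $\psi$.} Consider the dense set $P$ of points $x\in G'$ with $\si_d^j(x)=0$ for some $j$. These points are dense in $G'_p$, because the preimages of $0$ under $\si_{d_*}$ are dense and $\psi$ is monotone. As observed in the proof of Lemma~\ref{l:psi-n}, for such $x$ the values $\psi_{-k}(x)$ stabilize, and the stable value is the point determined by the itinerary of $x$ among the $\si_{d_*}$-preimages of $0$. We must check that $\psi(x)$ is the same point: since both maps are monotone semi-conjugacies fixing $0$, each sends the $j$-th preimages of $0$ in $G'$ to the $j$-th preimages of $0$ under $\si_{d_*}$, in the same circular order. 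This order-preservation argument is the main obstacle. The cleanest route is induction on $j$, using that both maps are order-preserving bijections between these finite sets of preimages once holes are collapsed.

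\textbf{Conclusion.} Given agreement on $P$, note that every point of $\uc$ either lies in $G'_p$ or in the closure of a hole of $G'_p$, and both maps are constant on the closures of such holes. Continuity and monotonicity then give $\tilde\psi=\psi$ everywhere. Alternatively, one can bypass the explicit identification by invoking uniqueness in Theorem~\ref{t:desc-gap}: the identification of $\uc/\sim_G$ with $\uc$ is unique once $0\mapsto 0$ is fixed, given a degree one monotone semi-conjugacy. In either case the key input is that $\tilde\psi$ collapses precisely the $\sim_G$-classes. That $\tilde\psi$ collapses nothing more follows from the semi-conjugacy: $\si_{d_*}$ is expanding, so a nondegenerate arc meeting $G'_p$ in more than one point cannot be mapped to a single point.
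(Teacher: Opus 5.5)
Your proposal follows the paper's proof: the paper also takes the uniform limit $\psi_{-\infty}$ from Lemma~\ref{l:psi-n}, notes that it fixes $0$ and collapses the closures of the holes of $G$, passes to the limit in $\psi_{-n+1}\circ\si^\A=\si_{d_*}\circ\psi_{-n}$, and concludes $\psi_{-\infty}=\psi$ simply because $\psi_{-\infty}$ has all the defining properties of $\psi$.

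The uniqueness step that you treat as the main obstacle, and which the paper leaves implicit, needs neither your dense-set/order-preservation argument nor an analysis of fibers. Take lifts $\Phi_1,\Phi_2$ of $\psi,\psi_{-\infty}$ with $\Phi_i(0)=0$, and the lift $S$ of $\si^\A$ with $S(0)=0$. Then $\Phi_i\circ S=d_*\Phi_i$, so the bounded periodic difference $D=\Phi_1-\Phi_2$ satisfies $D=d_*^{-n}D\circ S^n\to 0$. This matters, because your closing claim is misstated as written: the closure of a hole of $G'_p$ meets $G'_p$ in two points yet is collapsed. The correct assertion is that arcs meeting two distinct $\sim_G$-classes are not collapsed.
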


\begin{proof}
By Lemma \ref{l:psi-n}, the uniform limit $\psi_{-\infty}$ of $\psi_{-n}$ as $n\to \infty$ exists and takes $0$ to $0$;
by definition, it collapses the closure of every hole of $G$.
Passing to the limit as $n\to \infty$ in $\psi_{-n+1}\circ\si^\A=\si_{d_*}\circ\psi_{-n}$ yields that $\psi_{-\infty}$
semi-conjugates $\si^\A$ with $\si_{d_*}$.
Since $\psi_{-\infty}$ satisfies all the defining properties of $\psi$, then $\psi_{-\infty}=\psi$.
\end{proof}

Recall that $\delta_*:=\log_d d_*$, and
 $d_*^{-2}\le \Hmes^{\delta_*}(G')\le c_0\,d_*^{\delta_*}$, by Theorem \ref{t:HdimG}.

\begin{dfn}\label{d:hf}
Denote by $\hf$ the Hausdorff $\delta_*$-measure on $G'$ extended by zero to all of $\uc$
 and normalized so that $\hf(\uc)=1$.
Thus, $\hf[\al;\be]=\Hmes^{\delta_*}([\al;\be]\cap G')/\Hmes^{\delta_*}(G')$ for every arc $[\al;\be]\subset \uc$.
Let $\lf_{-\infty}$ be the limit of the measures $\lf_{-n}$ as $n\to \infty$
(by Corollary \ref{c:psilim}, the limit measure $\lf_{-\infty}$ exists and equals the $\psi$-pullback of the Lebesgue measure
so that $\lf_{-\infty}(\al;\be)=|\psi(\al;\be)|$ for every arc $(\al;\be)\subset\uc$).
\end{dfn}

Both $\lf_{-\infty}$ and $\hf$ are probability measures on $\uc$ supported on $G'_p$
and invariant under $\si_d$ such that the $\psi$-pushforwards
of both measures are $\si_{d_*}$-invariant measures on $\uc$.
It is well known that an invariant absolutely continuous $\si_{d_*}$-invariant probability measure on $\uc$
 coincides with the Lebesgue measure.
Below, we adapt the proof of this fact to the $\psi$-pushforward of $\hf$,
which satisfies a stronger invariance property.
Observe that the pushforward $\psi_*\lf_{-\infty}$ of $\lf_{-\infty}$ under $\psi$ is the Lebesgue measure $\lf_\infty$.
A strong invariance property of $\lf_{-\infty}$ and $\hf$ is stated below.

\begin{lem}
  \label{l:hf-inv}
If $\si_d$ is one-to-one on the set $(\al;\be)\cap G'$ where $(\al;\be)\subset\uc$ is a circle arc,
 then $\hf(\al;\be)=\frac 1{d_*}\hf(\si_d(\al;\be))$.
The same holds for $\lf_{-\infty}$.
\end{lem}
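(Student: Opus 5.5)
The plan is to prove the statement for $\hf$ directly from the definition of Hausdorff measure, using that $\si_d$ is locally a similarity with ratio $d$. Then I treat $\lf_{-\infty}$ via the semi-conjugacy $\psi$. Write $X=(\al;\be)\cap G'$ and $Y=\si_d(X)$. Since $G'$ is $\si_d$-invariant, $Y\subset G'$. I first check that $\si_d(\al;\be)\cap G'$ differs from $Y$ by at most a countable set, hence by a set of $\Hmes^{\delta_*}$-measure zero because $\delta_*>0$. Points of $\si_d(\al;\be)$ have preimages in $(\al;\be)$, and a preimage of a point of $G'$ lying in $G'$ is either in $X$ or in a hole of $G'$. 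But $G'$ coincides with $G'_\A$ up to countably many points, and $G'_\A$ is backward-invariant off $\bigcup\A$. So the discrepancy is countable. It therefore suffices to show $\Hmes^{\delta_*}(Y)=d_*\Hmes^{\delta_*}(X)$.

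For this I localize. Cover $X$ by finitely many subarcs $J$ of $(\al;\be)$ of length less than $1/d$; this is possible up to endpoints after splitting $(\al;\be)$, and endpoints are negligible. On each such $J$ the map $\si_d$ is injective, and it is an exact similarity of ratio $d$ for the metric $\rho$ on sets of $\rho$-diameter less than $1/(2d)$. Hence for any $E\subset J$ with small diameter, $\diam(\si_d E)=d\cdot\diam(E)$. Passing to $\Hmes^{\delta_*}_\e$ with $\e$ small gives $\Hmes^{\delta_*}(\si_d(E))=d^{\delta_*}\Hmes^{\delta_*}(E)=d_*\Hmes^{\delta_*}(E)$ for $E\subset J$. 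Write $X$ as the disjoint union of the pieces $X\cap J_i$, with $J_i$ half-open subarcs. By hypothesis $\si_d$ is injective on $X$, so their images are disjoint Borel sets. Summing over $i$, using that $\Hmes^{\delta_*}$ is a Borel measure, yields $\Hmes^{\delta_*}(Y)=d_*\Hmes^{\delta_*}(X)$. Dividing by $\Hmes^{\delta_*}(G')$, which is finite and positive by Theorem \ref{t:HdimG}, gives the claim for $\hf$.

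For $\lf_{-\infty}$ I use $\lf_{-\infty}(E)=|\psi(E)|$ together with $\psi\circ\si_d=\si_{d_*}\circ\psi$ on $G'$. The measure $\lf_{-\infty}$ is supported on $G'$ and has no atoms, since $\psi$ is continuous. Hence $\lf_{-\infty}(\al;\be)=\mathrm{Leb}(\psi(X))$ and $\lf_{-\infty}(\si_d(\al;\be))=\mathrm{Leb}(\psi(Y))=\mathrm{Leb}(\si_{d_*}\psi(X))$. Injectivity of $\si_d$ on $X$ and the semiconjugacy imply that $\si_{d_*}$ is injective on $\psi(X)$ off a countable set. Points with equal images under $\si_{d_*}\psi$ but different $\psi$-values would have $\si_d$-images in $G'$ identified by $\psi$; this happens only at endpoints of holes, which form a countable set. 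Since $\si_{d_*}$ multiplies Lebesgue measure by $d_*$ on sets where it is injective, we get $\mathrm{Leb}(\si_{d_*}\psi(X))=d_*\mathrm{Leb}(\psi(X))$, as required.

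The main obstacle is the bookkeeping of the countably many exceptional points. One issue is the gap between $\si_d(\al;\be)\cap G'$ and $\si_d(X)$, coming from hole endpoints and from $G'_\A\sm G'$. The other is the possible failure of injectivity of $\si_{d_*}$ on $\psi(X)$, caused by collapsed holes. Both are handled by countability, since each of $\hf$ and $\lf_{-\infty}$ is non-atomic.
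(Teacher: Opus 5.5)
Your core computation is correct: $\si_d$ is an exact similarity of ratio $d$ on short arcs, so injectivity on $X=(\al;\be)\cap G'$ gives $\Hmes^{\delta_*}(\si_d(X))=d^{\delta_*}\Hmes^{\delta_*}(X)=d_*\Hmes^{\delta_*}(X)$, and, via the semiconjugacy, $\mathrm{Leb}(\si_{d_*}\psi(X))=d_*\mathrm{Leb}(\psi(X))$. This is exactly what the paper's one-line proof (``by the definition of the Hausdorff measure'') appeals to.

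The gap is your preliminary reduction, the claim that $\si_d(\al;\be)\cap G'$ and $\si_d(X)$ differ by a countable set. Backward invariance of $G'_\A$ off $\bigcup\A$ only handles those preimages in $(\al;\be)$ that lie outside $\bigcup\A$. A point of $G'$ can have all its preimages in $(\al;\be)$ inside an arc of $\A$, and an arc of $\A$ (of length $1/d$) is mapped by $\si_d$ onto $\uc$ minus a point. Injectivity of $\si_d$ on $X$ does not prevent $(\al;\be)$ from meeting $\bigcup\A$. So the discrepancy can be uncountable and of positive $\hf$-measure.

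This step cannot be repaired, because the statement read literally is false. If $(\al;\be)$ is a major hole of $G'$, then $X=\0$, injectivity holds vacuously and $\hf(\al;\be)=0$, while $\si_d(\al;\be)$ covers $\uc$ up to at most one point, so $\hf(\si_d(\al;\be))=1$. A non-vacuous variant is $(\al;\be)=(a;\be)$ with $(a;b)$ a major hole and $\be\in G'$ slightly beyond $b$. The same objection applies to your equality $\lf_{-\infty}(\si_d(\al;\be))=\mathrm{Leb}(\psi(Y))$.

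What is true, and what is needed in Theorem \ref{t:lfinf}, is the identity with $\si_d(\al;\be)$ replaced by $\si_d((\al;\be)\cap G')$:
\[
\hf(\si_d(X))=d_*\,\hf(\al;\be)\quad\text{and}\quad \lf_{-\infty}(\si_d(X))=d_*\,\lf_{-\infty}(\al;\be).
\]
The paper's one-line proof is valid only under this reading as well. Alternatively, you may add the hypothesis $(\al;\be)\cap\bigcup\A=\0$, under which your countability argument is valid. With either correction, the rest of your proof goes through, including the countable bookkeeping for $\psi$.
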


\begin{proof}
By the definition of the Hausdorff measure and properties of $G'$, $\hf(\si_d(\al;\be)\cap G')=d^{\delta_*}\hf[(\al;\be)\cap G']$.
Noting that $d^{\delta_*}=d_*$ proves the first claim.
The second claim is just as simple.
\end{proof}

We can now show that $\lf_{-\infty}=\hf$.

\begin{thm}
  \label{t:lfinf}
  The measures $\lf_{-\infty}$ and $\hf$ on $\uc$ coincide.
\end{thm}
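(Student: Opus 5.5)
We compare the two measures through their $\psi$-pushforwards. Both $\lf_{-\infty}$ and $\hf$ are probability measures supported on $G'_p$. Moreover, $\psi$ is injective on $G'_p$ except at the countably many pairs of endpoints of holes. So each measure is recovered from its pushforward under $\psi$, provided it has no atoms. For $\hf$ this holds since $\Hmes^{\delta_*}$ of a point is zero; for $\lf_{-\infty}$ it holds since $\psi$ is continuous. We already know $\psi_*\lf_{-\infty}=\lf_\infty$ is Lebesgue measure. Thus it suffices to show that $\nu:=\psi_*\hf$ is Lebesgue measure on $\uc$.

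The first step is to show that $\nu$ is absolutely continuous, with a bounded density. Take an arc $J\subset\uc$, and let $[a;b]$ be the minimal arc with $\psi([a;b])=J$, endpoints in $G'$. Part (2) of Theorem \ref{t:HdimG} gives only a lower bound: $\hf[a;b]\ge |J|/(d_*^2\Hmes^{\delta_*}(G'))$. We also need an upper bound $\hf[a;b]\le C|J|$.

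To get it, I would cover $[a;b]\cap G'$ by level $-n$ connectors, where $n$ is chosen so that $d_*^{-n}\asymp |J|$. By Lemma \ref{l:psi-distance}, the $\psi$-images of these connectors have length at most $d_*^{-n}$. They are nondegenerate and pairwise non-overlapping: $\psi\circ\si_d^n=\si_{d_*}^n\circ\psi$, and each connector maps under $\si_d^n$ into a level $0$ connector. So each such image is an arc of length comparable to $d_*^{-n}$, and only boundedly many of them meet $J$.

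Each level $-n$ connector $I$ satisfies $\hf(I)\le d_*^{-n}\hf(\si_d^n I)\le d_*^{-n}$, by Lemma \ref{l:hf-inv} applied $n$ times; this uses that $\si_d^n$ is injective on $I\cap G'$ up to finitely many points. Hence $\nu(J)\le C|J|$, so $\nu$ is absolutely continuous with density $h\le C$. The lower bound gives $h\ge c>0$ as well, though this is not needed.

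The second step is invariance. Lemma \ref{l:hf-inv}, transported through $\psi$, says the following: for any arc $J$ on which $\si_{d_*}$ is injective, $\nu(J)=d_*^{-1}\nu(\si_{d_*}J)$. Applied to the $d_*$ preimage branches, this gives the identity $h(x)=h(\si_{d_*}x)$ a.e. This is stronger than the usual transfer-operator fixed-point equation.

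Iterating, $h=h\circ\si_{d_*}^k$ for all $k$. Then $h$ is measurable with respect to $\bigcap_k\si_{d_*}^{-k}\mathcal B$. This tail $\sigma$-algebra is trivial mod Lebesgue, by exactness of $\si_{d_*}$ (equivalently, by density points: $h$ is a.e. constant on tiny arcs whose $\si_{d_*}^k$-images cover $\uc$). So $h$ is constant, and since $\nu$ is a probability measure, $h\equiv1$. Hence $\nu=\lf_\infty=\psi_*\lf_{-\infty}$, and pulling back gives $\hf=\lf_{-\infty}$.

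The main obstacle is the first step, specifically the upper bound showing $\nu$ is absolutely continuous. There one must control how many level $-n$ connectors meet $[a;b]$ and check the injectivity hypothesis of Lemma \ref{l:hf-inv} along the orbit. If this becomes messy, I would try the alternative route of proving directly that $\hf$ and $\lf_{-\infty}$ agree on every connector. Both measures scale by $d_*^{-n}$ under the injective maps $\si_d^n$ from level $-n$ connectors to level $0$ ones. The difficulty then moves to showing that they agree on level $0$ connectors, which is again the ergodicity argument.
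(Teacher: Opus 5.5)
\textbf{Gap in Step 1.} Your reduction to showing that $\nu=\psi_*\hf$ is Lebesgue measure is fine. So is your transport of Lemma \ref{l:hf-inv} to the statement $\nu(J)=d_*^{-1}\nu(\si_{d_*}J)$ for arcs $J$ on which $\si_{d_*}$ is injective; the exceptional sets are countable, so neither measure sees them.

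The gap is in Step 1, where you assert that the $\psi$-images of the level $-n$ connectors meeting $[a;b]$ have length comparable to $d_*^{-n}$, so that only boundedly many meet $J$. The semiconjugacy only gives $|\psi(I)|=d_*^{-n}|\psi(\si_d^n(I))|$. By Lemma \ref{l:Ccn}, the arc $\si_d^n(I)$ merely lies \emph{in} a level $0$ connector. Its endpoints are forward images of void endpoints, and its $\psi$-length has no positive lower bound.

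For instance, take a cubic with $d_*=2$ and a single critical void $(\al;\be)$, and set $p_0=\psi(\al)$. The $\psi$-images of the level $-n$ connectors are the complementary arcs of $\bigcup_{j=0}^{n}\si_2^{-j}(p_0)$. The one starting at $p_0$ has length at most $2^{-n}\{p_0-2^np_0\}$ (fractional part). For generic $p_0$ this is $\ll 2^{-n}$ for infinitely many $n$.

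So your argument does not bound the number of connectors meeting $J$, and $\nu(J)\le C|J|$ is not established. The honest count is Lemma \ref{l:ncomp1}. Its error term $b_{-l}$, with $d_*^{l}\approx 1/|J|$, is negligible only as $n\to\infty$ with $l$ fixed, not for your choice $d_*^{-n}\asymp|J|$.

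\textbf{How to repair it.} Either of two fixes works.
\begin{enumerate}
\item The bound you want is already in the paper. Lemma \ref{l:Hmes-Scn} gives $\Hmes^{\delta_*}([a;b]\cap G')\le d_*(b_0+c_0)\la_0^{\delta_*}\lf_{-k}[a;b]$ for every $k$. By Lemma \ref{l:psi-n}, $\lf_{-k}[a;b]=|\psi_{-k}[a;b]|\to|\psi[a;b]|=|J|$. With that, your Step 2 goes through: invariance gives $h\circ\si_{d_*}=h$ almost everywhere, and ergodicity of $\si_{d_*}$ makes $h$ constant.
\item Step 1 and the ergodic theory are both unnecessary; skipping them is the paper's route. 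The paper only records that $\psi_*\hf$ and Lebesgue measure have no atoms and both satisfy the arcwise invariance of Lemma \ref{l:hf-inv}. Because your invariance holds for \emph{every} arc on which $\si_{d_*}$ is injective, not just almost everywhere, it determines $\nu$ outright:
\begin{itemize}
\item An open arc of length $d_*^{-1}$ is mapped injectively onto $\uc$ minus a point, so its $\nu$-mass is $d_*^{-1}$.
\item By induction, every open arc of length $d_*^{-k}$ has mass $d_*^{-k}$.
\item Additivity, non-atomicity and monotonicity then give $\nu(J)=|J|$ for every arc $J$.
\end{itemize}
\end{enumerate}
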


\begin{proof}
We want to show that $\psi_*\hf$,
the pushforward of $\hf$ under $\psi$, coincides with the Lebesgue measure $\lf_\infty=\psi_*\lf_{-\infty}$ on $\uc$.
Both measures $\psi_*\hf$ and $\lf_\infty$ are Borel probability measures
satisfying the invariance property of Lemma \ref{l:hf-inv}, and both have no atoms.
These facts are sufficient to prove that the two measures coincide.
Since the argument is standard, we leave the details to the reader.
\end{proof}

\subsection{Local Hausdorff measure estimates}
\label{ss:Haus-mes}
Below, we obtain upper bounds on the Hausdorff $\delta_*$-measures
of $G'\cap (\al;\be)$,
in terms of the measures $\lf_{-n}$.
Recall that $\Cc_{-n}$ is the union of level $-n$ connectors of $(f,K^*)$, and that $b_{-n}$
denotes the number of these connectors.
Recall that $c_0:=\frac{d-d_*}{d_*-1}$; see Lemma \ref{l:bn}. Finally, let $b_{-n}(X)$ be
the number of components of $X\cap \Cc_{-n}$, for $X\subset \uc$.

\begin{lem}
  \label{l:ncomp1}
Fix an arc $[\al;\be]\subset\uc$ and an integer $k\ge 0$.
The set $[\al;\be]\cap\Cc_{-n}$ with $n>l$ has at most $(b_0+c_0) d_*^{n+1}\lf_{-k}[\al;\be]+b_{-l}$
components where $l\ge 0$ (depending only on $\al$, $\be$, and $k$) is chosen so that
$d_*^{-1}\le d_*^l\lf_{-k}[\al;\be]<1$ if $\lf_{-k}[\al;\be]<1$ or $l=0$ if $\lf_{-k}[\al;\be]=1$.
\end{lem}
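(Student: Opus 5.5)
The plan is to split the components of $[\al;\be]\cap\Cc_{-n}$ into those contained in level $-n$ connectors lying entirely inside $[\al;\be]$, and the (at most two) "boundary" ones that are cut off by the endpoints $\al,\be$. The boundary ones are handled by the additive term $b_{-l}$, or even by a constant. The whole estimate then reduces to bounding the number of full level $-n$ connectors inside $[\al;\be]$ by $(b_0+c_0)d_*^{n+1}\lf_{-k}[\al;\be]$.

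First, use the choice of $l$. Since $d_*^l\lf_{-k}[\al;\be]<1$, the arc $[\al;\be]$ should meet only a bounded number of level $-l$ connectors. The cleanest route is to compare $\lf_{-k}$ with $\lf_{-l}$ via Lemma \ref{l:lk-scale} and the monotonicity of the $\psi$'s. A level $-l$ connector $T$ satisfies $\lf_{-l}(T)\le d_*^{-l}$, by the argument of Lemma \ref{l:psi-distance} transferred to $\psi_{-l}$ through diagram (\ref{e:dia1}). So at worst all $b_{-l}$ level $-l$ connectors meet $[\al;\be]$, which is where the term $b_{-l}$ comes from. Then, for $n>l$, each level $-n$ connector lies in a level $-l$ connector, since $\Cc_{-n}\subset\Cc_{-l}$. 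Only the connectors touching $\al$ or $\be$ can be partial, and they contribute at most $b_{-l}$ (or just $2$, which is absorbed).

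Next, count the full level $-n$ connectors $J\subset[\al;\be]$. Each has $\lf_{-n}(J)$ of size comparable to $d_*^{-n}$ from below. Indeed, $\si_d^n$ maps $J$ injectively (up to finitely many points) onto a level $0$ connector, and by Lemma \ref{l:lk-scale} we get $\lf_{-n}(J)=d_*^{-n}\lf_0(\si_d^n J)$. The difficulty is that $\lf_0$ of a level $0$ connector need not be bounded below uniformly. I would avoid this by counting via preimages instead of measure. The number of level $-n$ connectors inside a set of $\lf$-measure $x$ should scale like (total number $b_{-n}$) times $x$. Using Lemma \ref{l:bn}, $b_{-n}<(b_0+c_0)d_*^n$, and the self-similarity $\psi_{-n+1}\circ\si^\A=\si_{d_*}\circ\psi_{-n}$, the level $-n$ connectors are grouped by the $d_*^{n}$-fold structure over $\psi$-arcs of length $d_*^{-n}$. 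An arc of $\psi_{-k}$-length $\lf_{-k}[\al;\be]$ covers at most about $d_*^{n}\lf_{-k}[\al;\be]+1$ such blocks, each containing at most $(b_0+c_0)$ connectors. The extra factor $d_*$ in the bound comes from the discrepancy between $\psi_{-k}$ and $\psi_{-n}$ (Lemma \ref{l:psi-n} gives a uniform distance $\le d_*^{-k}$-type error) together with rounding. The "+1" boundary blocks are charged using $d_*^{-1}\le d_*^l\lf_{-k}[\al;\be]$.

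The main obstacle is making this block decomposition precise. Concretely, I need to show that the connectors of level $-n$ whose $\psi$-images lie in a fixed arc $[j d_*^{-n};(j+1)d_*^{-n}]$ number at most $b_0+c_0$. I would attempt this by pulling back $\Cc_0$ under the branches of $(\si^\A)^n$. Each branch corresponds to one such arc and carries a copy of the level $0$ connectors together with the intermediate collapsed voids. The remaining work is to reconcile $\lf_{-k}$ with $\lf_{-n}$, which I would control using Lemma \ref{l:psi-n}.
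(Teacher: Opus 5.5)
Your proposal has a genuine gap. The two steps that carry the estimate are exactly the ones you leave open, and neither works as proposed.

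First, the per-block bound (at most $b_0+c_0$ level $-n$ connectors over each $\psi$-arc of length $d_*^{-n}$) does not follow from Lemma~\ref{l:bn}. That lemma bounds only the total $b_{-n}<(b_0+c_0)d_*^n$, i.e.\ the \emph{average} over the $d_*^n$ blocks. When you pull $\Cc_0$ back along one branch of $(\si^\A)^{-n}$, a component can be added each time the branch passes through a block containing an arc of $\A$ whose collapse point lies in a connector. Nothing limits how often a single branch does this, and it can happen a positive proportion of the time. For example, take $d=3$ and $\A=\{(1/4;7/12)\}$. Both endpoints of the major arc map to the $2$-cycle $\{1/4,3/4\}$, and the $\psi$-arc of length $2^{-n}$ containing $\psi$ of the major arc carries about $n/2$ level $-n$ connectors. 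Second, Lemma~\ref{l:psi-n} gives only $\sup|\psi_{-n}-\psi_{-k}|\le d_*^{-k}/(d_*-1)$. This is an additive error, worth about $d_*^{n-k}$ extra blocks. It is absorbed by $(b_0+c_0)d_*^{n+1}\lf_{-k}[\al;\be]$ only when $\lf_{-k}[\al;\be]\gtrsim d_*^{-k-1}$, i.e.\ when $l\le k$, and even then with a constant worse than $d_*$.

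The idea you are missing is that $l$ is there to \emph{renormalize}, not to count boundary pieces. In the paper, $\lf_{-k}[\al;\be]<d_*^{-l}$ makes $\si_{d_*}^l$ injective on $\psi_{-k}[\al;\be]$. By diagram~\eqref{e:dia1}, i.e.\ $\psi_{-k+l}\circ(\si^\A)^l=\si_{d_*}^l\circ\psi_{-k}$, the iterate $(\si^\A)^l$ then maps $[\al;\be]$ monotonically onto an arc $J$ with $\lf_{-k+l}(J)=d_*^l\lf_{-k}[\al;\be]\ge d_*^{-1}$. On such an arc the crude global count suffices: $b_{-n+l}<(b_0+c_0)d_*^{n-l}\le(b_0+c_0)d_*^{n+1}\lf_{-k}[\al;\be]$. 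Pulling back by $(\si^\A)^l$ can split components only at the voids it collapses, and this is what produces the additive term $b_{-l}$. No per-block bound and no comparison of $\lf_{-k}$ with $\lf_{-n}$ is needed.

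Be aware, however, that composing \eqref{e:dia1} $l$ times from level $-k$ requires $l\le k$, since the diagram only runs through levels $\le 0$. The statement is false without some such restriction, so your second step cannot be closed in general when $l>k$. Indeed, for $k=0$ the lemma would give, via Lemma~\ref{l:Hmes-Scn}, $\Hmes^{\delta_*}(G'\cap J)\le C\,\lf_0(J)\le C|J|/\mu_0$ for every arc $J$, hence for every open set. Since $G'$ is Lebesgue-null, this forces $\Hmes^{\delta_*}(G')=0$, contrary to Theorem~\ref{t:HdimG}. Concretely, take $d=3$, $\Cc_0=[1/3;1]$, $k=0$ and $[\al;\be]=[1-3^{-5};1]$. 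Then $\lf_0[\al;\be]=1/162$, $l=7$, and the claimed bound is $2^{n+1}/81+b_{-7}$. But $[\al;\be]\cap\Cc_{-n}$ has at least $2^{n-5}$ components, which exceeds the bound for large $n$.
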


\begin{proof}
If $l=0$, then $1\le d_*\lf_{-k}[\al;\be]$ and, by Lemma \ref{l:bn},
$$
b_{-n}([\al;\be])\le b_{-n}<(b_0+c_0)d_*^n\le (b_0+c_0)d_*^{n+1}\lf_{-k}[\al;\be].
$$
Assume that $l\ne 0$.
Since
$$
|\psi_{-k}([\al; \be])|=\lf_{-k}[\al;\be]<d_*^{-l},
$$
then the map $\si_{d_*}^l$ is injective on $\psi_{-k}([\al; \be])$, hence,
by diagram (\ref{e:dia1}), the $l$-th iterate of $\si^\A$ is monotone on $(\al;\be)$ and the image of
 $[\al;\be]$ under this iterate is $[\si_d^l(\al); \si_d^l(\be)]$.
So, $d_*^{-1}\le \lf_{-k+l}([\si^l_d(\al); \si_d^l(\be)])$, and
 the same estimate as above applies with $k$, $n$, $\al$, $\be$ replaced,
 respectively, with $k-l$, $n-l$, $\si_d^l(\al)$, and $\si_d^l(\be)$.
Since $d_*^l\lf_{-k}[\al;\be]=\lf_{-k+l}[\si_d^l(\al);\si_d^l(\be)]$ then
$$b_{-n+l}((\si^\A)^l[\al;\be])\le (b_0+c_0)d_*^{n-l+1}\lf_{-k+l}((\si^\A)^l[\al;\be])$$
and it is easy to see that
$$(b_0+c_0)d_*^{n-l+1}\lf_{-k+l}((\si^\A)^l[\al;\be])=(b_0+c_0)d_*^{n+1}\lf_{-k}[\al;\be].$$
Since every component of
$\si_d^l([\al;\be]\cap\Cc_{-n})$ lies in some component of $(\si^\A)^l[\al;\be]\cap\Cc_{-n+l}$,
and
there are $b_{-l}$ holes of level $-l$ then
$$b_{-n}([\al;\be])\le b_{-l}+b_{-n+l}((\si^\A)^l[\al;\be]).$$
Therefore,
$$
b_{-n}([\al;\be])\le b_{-l} + (b_0+c_0)d_*^{n+1}\lf_{-k}([\al;\be]) .
$$
which implies the desired estimate.
\end{proof}

Lemma \ref{l:Hmes-Scn} uses Lemma  \ref{l:ncomp1}.

\begin{lem}
  \label{l:Hmes-Scn}
Fix an arc $[\al;\be]\subset\uc$ and an integer $k\ge 0$. Then the $\delta_*$-Hausdorff measure of $[\al;\be]\cap G'$
is at most $d_*(b_0+c_0)\la_0^{\delta_*} \lf_{-k}[\al;\be]$.
\end{lem}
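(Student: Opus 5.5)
The plan is to bound $\Hmes^{\delta_*}([\al;\be]\cap G')$ from above by the definition of Hausdorff measure. We cover $[\al;\be]\cap G'$ by the components of $[\al;\be]\cap\Cc_{-n}$ for large $n$, and then let $n\to\infty$. Since $G'=\bigcap_n\Cc_n\subset\Cc_{-n}$, these components do cover $[\al;\be]\cap G'$.

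Each such component lies in a single level $-n$ connector. By Lemma \ref{l:lan}, its length is therefore at most $\la_{-n}\le d^{-n}\la_0$. These lengths tend to $0$, so for any prescribed $\e>0$ the cover is admissible in $\Hmes^{\delta_*}_\e$ once $n$ is large.

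Fix $k$ and choose $l$ as in Lemma \ref{l:ncomp1}. For $n>l$, that lemma gives at most
$$(b_0+c_0)d_*^{n+1}\lf_{-k}[\al;\be]+b_{-l}$$
components. Hence
$$
\Hmes^{\delta_*}_\e([\al;\be]\cap G')\le \bigl((b_0+c_0)d_*^{n+1}\lf_{-k}[\al;\be]+b_{-l}\bigr)(d^{-n}\la_0)^{\delta_*}.
$$
Using $d^{-n\delta_*}=d_*^{-n}$, the first term equals $d_*(b_0+c_0)\la_0^{\delta_*}\lf_{-k}[\al;\be]$, which is independent of $n$. The second term is $b_{-l}\la_0^{\delta_*}d_*^{-n}$. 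Here $l$ depends only on $\al,\be,k$ and not on $n$, so this term tends to $0$ as $n\to\infty$. Letting $n\to\infty$ and then $\e\to0$ yields the claim.

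The main care point is the additive error $b_{-l}$ in Lemma \ref{l:ncomp1}. One must check that $l$ is genuinely independent of $n$, which it is by the statement of that lemma, so the error disappears in the limit. The other point to check is that pieces of connectors, rather than whole connectors, still have diameter at most $\la_{-n}$ in the metric $\rho$. This holds because diameter in $\rho$ is bounded by arc length. If $\lf_{-k}[\al;\be]=0$, the formula gives $0$, and this is consistent: then $[\al;\be]$ avoids the support of $\lf_{-k}$ except at finitely many points, and the same covering argument applies.
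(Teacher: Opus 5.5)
Your proposal is correct and is essentially the paper's proof: you cover $[\al;\be]\cap G'$ by the components of $[\al;\be]\cap\Cc_{-n}$, count them with Lemma \ref{l:ncomp1}, bound their lengths by $\la_0 d^{-n}$ via Lemma \ref{l:lan}, and let $n\to\infty$ so that the $b_{-l}$ term vanishes. You also treat the degenerate case $\lf_{-k}[\al;\be]=0$ separately, which the paper leaves implicit: there no admissible $l$ exists, and $[\al;\be]\cap G'$ is finite, so it has zero $\delta_*$-measure.
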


\begin{proof}
Clearly, $[\al;\be]\cap G'\subset [\al;\be]\cap\Cc_{-n}$ for any $n>0$; by Lemma \ref{l:ncomp1} the latter set
consists of at most $(b_0+c_0) d_*^{n+1}\lf_k[\al;\be]+b_{-l}$ closed circle arcs each of which has length at most $\la_0 d^{-n}$
(by Lemma \ref{l:lan}). Since $(\la_0 d^{-n})^{\delta_*}=\la_0^{\delta_*}d_*^{-n}$,
for every $\e>0, n>0$ with $\la_0 d^{-n}<\e$ we get
$$
\Hmes_\e^{\delta_*}([\al;\be]\cap G')\le \la_0^{\delta_*} d_*^{-n} \bigl((b_0+c_0) d_*^{n+1}\lf_{-k}[\al;\be]+b_{-l} \bigr).
$$
Letting $n\to \infty$ (and remembering that $b_{-l}$ depends only on $\al$, $\be$, $k$ but not on $n$),
 we obtain the desired.
\end{proof}

Lemma \ref{l:Hmes-Scn} implies the following lower bound on $\lf_{-k}[\al;\be]$:
\begin{equation}\label{l:lowb}
\lf_{-k}[\al;\be]\ge \frac 1{d_*(b_0+c_0)\la_0^{\delta_*}} \Hmes^{\delta_*}([\al;\be]\cap G').
\end{equation}
It is important that this lower bound is independent of $k$!

\section{Geometric estimates}
\label{s:geom-est}
In this section, we consider certain conformal metrics on the plane and use
 them for extremal length / conformal moduli estimates.

\subsection{Conformal metrics}
\label{ss:metrA}
\def\grm{\mathrm{g}}
A (measurable) conformal (pseu\-do) metric on a Riemann surface $S$
 is an expression that looks locally, in any coordinate chart, as $\gf(z)=\mathrm{g}(z)|dz|$,
 where $z$ is a local coordinate, and $\grm$ is a nonnegative measurable function of $z$.
Transition from one local chart to another follows the standard transformation rules for $\grm$ and $dz$;
note that $\grm$ may vanish at certain points and even on certain regions.
One can multiply a conformal metric by a nonnegative function;
 the result is another conformal metric.
Given a piecewise smooth curve $\ga$ in $S$, the ($\gf$-)\emph{length} of $\ga$ is
\begin{equation}\label{e:len}
\Len_\gf(\ga):=\int_\ga \grm(z) |dz|=\int_{0}^{1} \grm(\ga(t)) |\dot\ga(t)| dt,
\end{equation}
where the last expression assumes that $\ga$ is
(piecewise $C^1$) parameterized by a parameter $t\in [0;1]$.
Also, if $D$ is any domain in $S$ covered by a single coordinate chart,
then the ($\gf$-)area of $D$ is defined as
\begin{equation}\label{e:area}
\area_\gf(D):=\int_D \grm(z)^2 |dx\wedge dy|,\quad z=x+iy.
\end{equation}
If $D$ is not covered by a single coordinate chart, then cut it into smaller subdomains,
each fitting to a coordinate chart, and sum up the corresponding areas.

\begin{dfn}[Extremal length]
\label{d:el}
Let $\Fc$ be a family of piecewise smooth curves in $S$ and $\gf$ be a
 conformal metric on $S$ of finite positive $\area_\gf(S)$.
Define the \emph{$\gf$-extremal length} of $\Fc$ as
\begin{equation}\label{e:elen}
\Lambda_{\gf}(\Fc)=\frac{\Len_{\gf}(\Fc)^2}{\area_\gf(S)}
\end{equation}
where $\Len_{\gf}(\Fc)$ is the infimum length of a curve from $\Fc$ with respect to $\gf$.
The \emph{extremal length} of $\Fc$ is $\el_S(\Fc)=\sup_{\gf}\Lambda_{\gf}(\Fc)$
where $\gf$ ranges through all measurable conformal metrics in $S$
of finite positive area.
If $S$ is fixed, it will be omitted from notation.
\end{dfn}

An \emph{(open) annulus} is a
 Riemann surface homeomorphic to the cylinder $\uc\times \R$.
Any annulus $A$ is conformally isomorphic to a \emph{unique} model open Euclidean
 cylinder $\uc\times (0;\mu)$ of circumference 1 (an isomorphism here is unique up to a rigid rotation);
 the height $\mu$ of this cylinder is called \emph{the modulus} of $A$
 and is denoted by $\mod(A)$.
Identify $A$ with its
 conformal model and let $\ol{A}$
be the corresponding closed cylinder $\uc\times [0;\mu]$.
Then one can talk about curves in $A$ that separate the two boundary circles
 $\uc\times\{0\}$ and $\uc\times\{\mu\}$ of $\ol{A}$.

The following is classical (see, e.g., \cite{ahl66}).

\begin{thm}
  \label{t:mod-above}
Let $A$ be an
 annulus, and define $\Fc_A$ as the
 family of all piecewise smooth closed curves in $A$ separating the two boundary components of $A$.
Then
$$
\mod(A)=\frac{1}{\el(\Fc_A)}\le 1/\Lambda_\gf(\Fc_A),
$$
for any conformal metric $\gf$ on $A$.
\end{thm}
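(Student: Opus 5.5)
The plan is to identify $A$ conformally with the model cylinder $\uc\times(0;\mu)$, $\mu=\mod(A)$, and prove the two assertions in turn: first the equality $\mod(A)=1/\el(\Fc_A)$, then the inequality $\mod(A)\le 1/\Lambda_\gf(\Fc_A)$ for an arbitrary admissible $\gf$. Since extremal length and the family $\Fc_A$ are conformally invariant (a conformal map transports measurable conformal metrics together with their lengths and areas), it suffices to work on the cylinder with coordinate $z=x+iy$, $x\in\R/\Z$, $y\in(0;\mu)$.

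For the equality we need two bounds. For $\el(\Fc_A)\ge 1/\mu$, take the flat metric $\gf_0=|dz|$. Its area is $\mu$. Every curve in $\Fc_A$ separates the two boundary circles, so it is not null-homotopic in $A$. Hence its projection to the $x$-coordinate has nonzero degree, and its $\gf_0$-length is at least $1$. Therefore $\Lambda_{\gf_0}(\Fc_A)\ge 1/\mu$; equality holds because the horizontal circles have length exactly $1$.

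The reverse bound $\el(\Fc_A)\le 1/\mu$ is the main step, and it uses the standard length–area argument. Fix any $\gf=\grm|dz|$ of finite positive area, and let $L=\Len_\gf(\Fc_A)$. Each horizontal circle $\{y=c\}$ lies in $\Fc_A$, so $L\le\int_0^1\grm(x,c)\,dx$ for almost every $c$. Integrating over $c\in(0;\mu)$ gives $L\mu\le\int_A\grm\,dx\,dy$. Cauchy–Schwarz then gives $L^2\mu^2\le \mu\cdot\area_\gf(A)$, that is, $\Lambda_\gf(\Fc_A)\le 1/\mu$. Taking the supremum over $\gf$ yields $\el(\Fc_A)=1/\mu$, and hence $\mod(A)=1/\el(\Fc_A)$.

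The inequality $\mod(A)\le 1/\Lambda_\gf(\Fc_A)$ for any $\gf$ of finite positive area then follows directly from $\Lambda_\gf(\Fc_A)\le\el(\Fc_A)$, which holds by the definition of $\el$ as a supremum. The only technical care is measurability in the Fubini step: $\grm$ is measurable and nonnegative, so Tonelli applies, and circles where the integral is infinite cause no harm. I expect no serious obstacle; the delicate point is just the length–area inequality above.
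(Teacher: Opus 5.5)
Your strategy is the classical length--area argument, which is what the paper relies on: it gives no proof of its own and only cites Ahlfors. The length--area half is correct. It is in fact all that the inequality $\mod(A)\le 1/\Lambda_\gf(\Fc_A)$ needs, since it gives $\Lambda_\gf(\Fc_A)\le 1/\mu$ directly using only the horizontal circles. The same argument shows the inequality holds for any subfamily of $\Fc_A$ containing the core circles, e.g.\ the simple curves of $\Fc_\circ$ to which the paper later applies it.

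The step that fails is in the lower bound $\el(\Fc_A)\ge 1/\mu$. You argue that a curve in $\Fc_A$ separates the boundary circles, ``so it is not null-homotopic'', and hence its $x$-projection has nonzero degree. But $\Fc_A$ contains all piecewise smooth closed curves, not only simple ones, and for non-simple curves this implication is false. For a counterexample:
\begin{enumerate}
\item run once around $\{y=\mu/3\}$ in the positive direction;
\item go up $\{0\}\times[\mu/3;2\mu/3]$;
\item run once around $\{y=2\mu/3\}$ in the negative direction;
\item come back down the same segment.
\end{enumerate}
This closed curve has degree $0$, yet its image contains a full horizontal circle, so it separates.

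The bound $\Len_{|dz|}(\ga)\ge 1$ is still true, but it needs a different justification.
\begin{itemize}
\item If $\ga$ separates, its $x$-projection is onto $\R/\Z$. Otherwise some vertical segment $\{x_0\}\times(0;\mu)$ misses $\ga$ and joins the two boundary circles.
\item A closed curve in $\R/\Z$ that is onto has length at least $1$. This is clear if its degree is nonzero. If its degree is $0$, it lifts to a closed curve in $\R$ whose image is an interval of length at least $1$, traversed out and back, so its length is at least $2$.
\end{itemize}
Since $|dz|\ge |dx|$, this restores $\Lambda_{|dz|}(\Fc_A)\ge 1/\mu$, and hence the equality $\mod(A)=1/\el(\Fc_A)$. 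Your original argument does work verbatim for simple closed curves, where separating forces degree $\pm 1$.
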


Next, we go back to the basic setup with a degree $d$ complex
polynomial $f$ and a degree $d_*$ invariant component $K^*$ of $K(f)$. Recall that $g_f$ is the Green function for $f$.

\begin{dfn}[The metric $\ef$]
\label{d:ef}
Let $\ef$ be the smooth conformal metric on $\C\sm K(f)$ uniquely defined
by the property that $\ef=|dg_f|$ on any external segment.
On the basin of infinity, this is a flat (locally Euclidean) metric with singularities
at iterated preimages of the escaping critical points, that is,
at singularities of $g_f$.
We extend $\ef$ by zero to all of $\C$.
\end{dfn}

In a neighborhood of infinity, there is a holomorphic B\"ottcher coordinate $\phi_f$ that
conjugates $f$ with $z\mapsto z^d$; the Green function $g_f$ coincides with $\log|\phi_f|$
in this neighborhood of infinity.
Consider a polar coordinate system $(\rho,\ta)$ near infinity, where $\rho$ is the
distance to the origin and $\ta\in\R/\Z$ is the polar angle. In these polar coordinates
$\phi_f(z)=\rho_f e^{2\pi i\ta_f}$ which makes $\rho_f$ and $\ta_f$ into functions of $z$
so that $g_f(z)=\log \rho_f(z)$ and $\ta_f(z)$ is the external argument of $z$.
Functions $\rho_f$, $\ta_f$ embed a punctured neighborhood of infinity into
a Euclidean cylinder with the metric
\begin{equation}\label{e:eucylin}
|d\log\rho(z)|^2+|d(2\pi \ta)|^2=\frac{d\rho^2}{\rho^2}+4\pi^2 d\ta^2.
\end{equation}
Pulling back this metric from the cylinder to the dynamical plane of $f$, one obtains
the restriction of $\ef$ to a neighborhood of infinity.

Note that equipotentials
 that encircle all the critical points of $f$ have length $2\pi$ with respect to $\ef$.
More generally, one has the following straightforward lemma.

\begin{lem}
  \label{l:ef-circum}
An equipotential arc crossing the external rays $R_f(\ta)$
 for all $\ta\in [\al,\be]$ and for no other values of $\ta$, has $\ef$-length
 equal to $2\pi\lf_\infty[\al;\be]$.
Hence, the $\ef$-length of $O^*(t_n)$ is equal to $2\pi\mu_n$.
\end{lem}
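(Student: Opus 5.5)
The plan is to reduce to the Böttcher cylinder picture and then propagate the computation down along external intervals using the local isometry property of $f$ for $\ef$.

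\emph{Step 1: the region near infinity.} Near infinity, by \eqref{e:eucylin}, $\ef$ is the pullback of the flat cylinder metric $\frac{d\rho^2}{\rho^2}+4\pi^2d\ta^2$ under the Böttcher coordinate $\phi_f$. An equipotential arc at a level $t$ lying in this region, parametrized by $\ta\in[\al;\be]$, is then the curve $\rho=e^t$, $\ta\in[\al;\be]$. Its length is
$$\int_\al^\be 2\pi\,d\ta=2\pi|[\al;\be]|=2\pi\lf_\infty[\al;\be].$$

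\emph{Step 2: arbitrary levels.} For levels $t$ where $\phi_f$ is not defined, I would use the fact that $g_f$ is harmonic off singularities. So locally $g_f+i\,h$, with $h$ a harmonic conjugate, is a holomorphic coordinate $w$ on $\C\sm K(f)$ minus singularities. In this coordinate, $\ef=|dw|$, since $\ef=|dg_f|$ along gradient lines and the metric is conformal. The quantity $h/2\pi$ is constant along external intervals. The normalization near infinity gives $h=2\pi\ta_f$, and this is propagated along the gradient flow, which preserves $h$.

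Hence along an equipotential arc, $|dw|=|dh|$, so the $\ef$-length is the total variation of $h$ along the arc, i.e.\ $2\pi$ times the measure of the set of external arguments crossed. Concretely, the flow lines from the arc up to a high level sweep a strip. The map along external intervals is an $\ef$-isometry in the $h$-direction, carrying the arc onto an arc of $O(t')$ for large $t'$ with the same argument set $[\al;\be]$. Singularities form a finite set on the arc and do not affect length. The step needing most care is justifying that each crossing ray contributes exactly once, i.e.\ that distinct points of the arc lie on distinct external intervals with distinct arguments. This follows because $h$ is monotone along an equipotential arc between singularities: $\nabla g_f$ is nonvanishing, so $dh\neq0$ tangentially.

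\emph{Step 3: the oval $O^*(t_n)$.} Apply Step 2 to the oval $O^*(t_n)$, cut at finitely many points into arcs. The set of arguments crossing or hitting $O^*(t_n)$ is by definition $\Cc_n$, viewed in $O(t_{m+1})\cong\uc$. Summing over the arcs gives $\ef$-length $2\pi|\Cc_n|=2\pi\mu_n$. The finitely many broken-ray arguments merging at singularities have measure zero, so they do not matter.
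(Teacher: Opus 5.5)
Your proposal is correct and is essentially the argument the paper has in mind: the paper states this lemma without proof as straightforward, relying on the fact that $\ef$ is the pullback of the flat cylinder metric $\frac{d\rho^2}{\rho^2}+4\pi^2d\ta^2$ under the B\"ottcher coordinate continued along external rays, so that the $\ef$-length of an equipotential arc is $2\pi$ times the measure of the external arguments it crosses, with the finitely many singular and broken-ray points contributing nothing. One small tightening: that each crossed ray is counted only once follows most directly from $g_f$ being strictly monotone along every external interval, so a ray meets a given equipotential at most once; monotonicity of a local branch of $h$ only gives local injectivity.
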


Recall that $\lf_\infty$
is the Lebesgue probability measure on $\uc$,
and $\mu_n$ stands for the total length of $\Cc_n$, the union of all level $n$ connectors.

\subsection{Rectangles}
\label{ss:rect}
Recall that external rays are unbounded external intervals that extend from infinity to $K(f)$ or to a singularity of $g_f$.
By definition, an external ray does not contain singularities.
Let $\Sc_n$ be the intersection of $A_n$ with the union of all external rays
 that cross the inner boundary $O^*(t_n)$ of $A_n$.
The set $\Sc_n$ is an open set and every component of it is foliated by segments of external rays;
 each such segment connects the two boundary components of $A_n$.
Indeed, since, by definition, an external ray contains no singularities of $g_f$,
 some neighborhood of it is foliated by nearby external rays.

\begin{dfn}[Rectangles]\label{d:re1}
Components of $\Sc_n$ are called \emph{level $n$ open rectangles},
and their closures are called \emph{level $n$ closed rectangles}.
\end{dfn}

Note that some broken rays that reach the inner boundary $O^*(t_n)$ of $A_n$
 may not lie in closed rectangles.
Indeed, suppose there are more than $2$ broken rays that meet at a singularity
 of $g_f$ in $A_n$, for some $n$.
Then 
 two of them, denoted here by $R'$ and $R''$, such that
$(R'\cup R'')\cap A_n$
 forms the shape of a letter $Y$, are contained in closed rectangles.
However the remaining broken rays (located between $R'$ and $R''$) are partially isolated
and are not contained in the union of all closed rectangles.
Note also that a smooth segment of a broken ray may separate two adjacent closed rectangles;
 even though this piece has no singularities in $A_n$, it extends to a bounded external segment eventually hitting
 a singularity in the direction of growing potential.
The inner boundary of $A_n$ is contained in the union of closed rectangles,
 as follows immediately from the definitions.

With respect to the metric $\ef$, each level $n$ open rectangle is
a Euclidean rectangle of height $t_{n+1}-t_n$.
In each open rectangle, there is a preferred choice of \emph{vertical and  horizontal} directions.
Namely, the vertical direction is the one along external rays,
and the horizontal direction is the one along equipotential curves.
Edges of a rectangle can also be classified as vertical or horizontal.
Vertical edges are segments of broken rays, e.g.,
they may contain parts of external intervals and singularities of $g_f$.
Horizontal edges lie in equipotential curves.

The following lemma describes how two closed level $n$ rectangles may touch.
Recall that both $\R/\Z$ and $\R_{>0}$ come with natural orientations.
For this reason, it makes sense to talk of horizontal direction to the right and
 and of vertical direction upwards.

\begin{lem}
  \label{l:er-touch}
Let $Q_1$, $Q_2$ be distinct closed level $n$ rectangles.
Either $Q_1\cap Q_2=\0$, or $Q_1\cap Q_2$ is a subset of the vertical edges of both $Q_1$ and $Q_2$.
If the right edge $E_1$ of $Q_1$ has points in common with the left edge $E_2$ of $Q_2$,
 then either $E_1=E_2$ or $E_1\cap E_2$ is a bottom subsegment of both $E_1$, $E_2$,
 whose top endpoint is a singularity of $g_f$.
\end{lem}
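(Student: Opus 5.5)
The plan is to work in the flat structure given by the metric $\ef$ and exploit that $A_n$ contains no singularities of $g_f$ in its interior except along the foliation's singular leaves. Each level $n$ open rectangle $Q$ is a component of $\Sc_n$, foliated by segments of external rays running from $O^*(t_n)$ to $O^*(t_{n+1})$. Via the local coordinates $(g_f,\ta)$ it is isometric to a Euclidean rectangle $(\al_Q;\be_Q)\times(t_n;t_{n+1})$. Its closure adds two horizontal edges (arcs of $O^*(t_n)$ and of $O(t_{n+1})$) and two vertical edges, which are limits of the leaves as $\ta\to\al_Q^+$ and $\ta\to\be_Q^-$. These limiting leaves are segments of broken rays; a singularity can appear on them only at a point where the limiting ray crashes.

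First I would show that $Q_1\cap Q_2$ avoids interiors and horizontal edges.
\begin{enumerate}
\item Open rectangles are distinct components of the open set $\Sc_n$, so they are disjoint.
\item A point of the interior of $Q_1$ lies on a nonsingular external ray segment. It has a foliated neighborhood inside $\Sc_n$, so it cannot lie in $\ol{Q_2}$ without lying in $Q_2$.
\item Bottom horizontal edges lie in $O^*(t_n)$, and distinct rectangles occupy disjoint argument intervals there. Indeed, each external ray crossing $O^*(t_n)$ belongs to exactly one rectangle.
\item Top edges lie in $O(t_{n+1})$. A common point of two top edges would be a point where the closures meet in the horizontal direction. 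Locally near a nonsingular point of the equipotential this forces the open rectangles to overlap. At a singular point of $O(t_{n+1})$ it forces that point to be a corner, hence on a vertical edge.
\end{enumerate}
So the intersection lies in the vertical edges of both rectangles.

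Next, suppose the right edge $E_1$ of $Q_1$ meets the left edge $E_2$ of $Q_2$. Both are built by descending along $-\nabla g_f$ from level $t_{n+1}$ down to $O^*(t_n)$. Uniqueness of gradient flow lines away from singularities implies the following. If $E_1$ and $E_2$ share a nonsingular point $z$, they agree on the whole downward flow line from $z$ down to $O^*(t_n)$, since no singularity occurs below along a leaf reaching $O^*(t_n)$ inside $A_n$. Going upward from $z$, they agree until the first singularity $s$ of $g_f$ on the common path. If there is no such singularity before level $t_{n+1}$, then $E_1=E_2$. Otherwise, above $s$ the two edges follow different incoming external intervals of $s$, because the flow line through $s$ is not unique upward. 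Thus $E_1\cap E_2$ is the segment from the bottom up to $s$, a bottom subsegment whose top endpoint is a singularity. If they share only a singular point $s$, the downward direction is still forced (only one descending direction from $s$ continues toward $K^*$ within $A_n$, being on the ray towards $K^*$). So the same conclusion holds.

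The main obstacle is that last claim: at a singularity $s$ of $g_f$ in $A_n$, the two vertical edges leaving it downward toward $O^*(t_n)$ must coincide. Local models $g_f\approx \mathrm{Re}\,z^{k+1}$ have several descending separatrices. I would argue via the tree structure from Lemma~\ref{l:foam-cont}: only one descending separatrix enters the bubble-free component containing $K^*$, and rays in other directions do not reach $O^*(t_n)$. A leaf passing through them would therefore not lie in $\Sc_n$.
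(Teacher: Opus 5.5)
Your proposal is essentially correct, but at the key step it takes a different route from the paper. The paper takes the maximal bottom subsegment of $E_1\cap E_2$. It notes that the edges diverge upward at its top endpoint $\omega$, so $\omega$ is a singularity, since the upward gradient line through a regular point is unique. It then excludes a second meeting above $\omega$ by a global topological argument: otherwise the topological hull of $E_1\cup E_2$ would contain a bounded open disk free of external rays.

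You instead use \emph{downward determinism}: below any of its points, a vertical edge is forced. At regular points this is uniqueness of gradient lines. At a singularity $s$, the rays of the rectangle accumulating on the edge must enter the unique lower sector at $s$ (local component of $\{g_f<g_f(s)\}$) lying in the Jordan domain bounded by $O^*(g_f(s))$, so the edge continues along the descending separatrix in that sector. Hence $E_1\cap E_2$ is closed under going down. This gives the bottom-subsegment structure, excludes re-merging, and also covers meetings that do not contain the bottom points, which the paper's proof takes for granted. The paper's argument is shorter and needs no sector analysis. Yours is local and gives slightly more, e.g.\ that the common segment has no singularity in its relative interior.

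The write-up needs some repairs, all with ingredients you already have.

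(i) Your reason for agreement below a common regular point $z$ (``no singularity occurs below along a leaf'') is not valid. Edges are limits of leaves, not leaves, and they can pass through singularities inside $A_n$ --- e.g.\ through the branch point $s$ itself, which is exactly what happens when you apply the principle at a point above $s$. The correct reason is the unique-descent fact from your last paragraph, applied at every singularity below $z$.

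(ii) ``Above $s$ the two edges follow different incoming external intervals, because the flow line through $s$ is not unique upward'' is a non sequitur: non-uniqueness does not force divergence. One fix is to define $s$ as the top endpoint of the common set, as the paper does. The other is to show that a common segment cannot pass through a singularity. Just above $s$, the rays of $Q_1$ and of $Q_2$ lie on opposite sides of the same external interval arriving at $s$ from above. They therefore enter the two distinct lower sectors adjacent to it, and only one of these lies inside $O^*(g_f(s))$.

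(iii) ``Thus $E_1\cap E_2$ is the segment from the bottom up to $s$'' silently assumes there is no common point above $s$. You must say that such a point would, by downward determinism, force agreement just above $s$. This is precisely the step the paper proves with its topological-hull argument, so it should not be left implicit.

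(iv) In step 4 of the first claim, a common point of two top edges need not make the open rectangles overlap; take the shared top corner when $E_1=E_2$. The correct statement is that a relative interior point of a horizontal edge has a neighborhood whose part in $A_n$ lies in its own open rectangle. So common points of horizontal edges are corners, which lie on vertical edges.
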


\begin{proof}
As the first claim of the lemma is clear, we prove only the last claim.
Let $I$ be the maximal by inclusion bottom subsegment of $E_1\cap E_2$,
 and let $\om$ be its top endpoint.
Since the two edges $E_1$, $E_2$ diverge from $\om$ upwards, that is,
 form a $Y$-shape, the point $\om$ is necessarily a singularity of $g_f$.
It is impossible that $E_1$, $E_2$ merge again above $\om$, as otherwise
 the topological hull of $E_1\cup E_2$ would contain an open bounded disk free from external rays.

Note that $\om$ may be a point on the inner boundary of $A_n$,
 in which case $Q_1$ and $Q_2$ have only one point in common (this point is $\om$).
\end{proof}

Say that two closed rectangles $Q_1$ and $Q_2$ as
 in Lemma \ref{l:er-touch} are \emph{$Y$-attached} to each other
 if (possibly after relabeling $Q_1$ and $Q_2$) the right edge $E_1$ of $Q_1$
 and the left edge $E_2$ of $Q_2$ are such that $E_1\cap E_2$ is a proper bottom
 subsegment of both.
Let $\om$ be the top endpoint of this segment; it is a singularity of $g_f$
 called the \emph{branch point of $Q_1\cup Q_2$}.
Clearly, there is a bubble attached to $\om$ and otherwise disjoint from $Q_1\cup Q_2$.

\begin{lem}
\label{l:Scn}
Every closed level $n$ rectangle is mapped by $f$ into (not necessarily onto)
a closed level $n+1$ rectangle. If closed level $n$ rectangles $Q_1$, $Q_2$ are $Y$-attached and
mapped to the same level $n+1$ closed rectangle, then the
branch point of $Q_1\cup Q_2$ is critical.
\end{lem}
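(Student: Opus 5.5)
The plan is to argue directly from the definitions of $A_n$, $\Sc_n$ and the dynamics of the Green function, using $g_f\circ f=d\cdot g_f$ and $f(O^*(t))=O^*(dt)$.

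\textbf{Step 1: $f$ maps $\Sc_n$ into $\Sc_{n+1}$.} Let $z$ lie in an open level $n$ rectangle $Q^\circ$. Then $z$ lies on an external ray $R(\ta)$ that crosses $O^*(t_n)$, and $z\in A_n$. The image $f(R(\ta))$ is contained in $R(d\ta)$. Since $f(O^*(t_n))=O^*(t_{n+1})$, the ray $R(d\ta)$ crosses $O^*(t_{n+1})$.

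We also need $f(A_n)\subset A_{n+1}$. For $n\le -1$ this follows from Lemma \ref{l:mapA_n}. In general, $f$ maps the closed topological disk bounded by $O^*(t_{n+1})$ into the one bounded by $O^*(t_{n+2})$, since it maps $O^*(t_{n+1})$ onto $O^*(t_{n+2})$ and is proper. Moreover, $g_f$ is multiplied by $d$, and points of $A_n$ have potential strictly between $t_n$ and $t_{n+1}$ (or lie in bubbles that $f$ maps to bubbles). Hence the image of $A_n$ lies strictly between the two curves.

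Therefore $f(Q^\circ)\subset\Sc_{n+1}$. Since $Q^\circ$ is connected, $f(Q^\circ)$ lies in a single component of $\Sc_{n+1}$, and by continuity the closed rectangle maps into the corresponding closed level $n+1$ rectangle. The subtle point is whether the image of the segment of $R(\ta)$ inside $A_n$ contains a singularity. It does not: $f$ maps external rays (which avoid singularities) into external rays, and singularities are precisely the points whose $f$-images are critical values of $g_f$ or images of singularities.

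\textbf{Step 2: the branch point is critical.} Suppose $Q_1,Q_2$ are $Y$-attached with branch point $\om$ and both map into the same closed level $n+1$ rectangle $Q'$. Argue by contradiction: assume $\om$ is not a critical point of $f$. Then $f$ is a local homeomorphism near $\om$, so it maps the $Y$-shaped configuration at $\om$ injectively onto a $Y$-shaped configuration at $f(\om)$. In particular, $f(\om)$ is a singularity of $g_f$, lying in $A_{n+1}$ (or on its inner boundary), with the bubble attached at $\om$ mapped to a bubble attached at $f(\om)$.

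Near $\om$, the part of the right edge of $Q_1$ above $\om$ and the part of the left edge of $Q_2$ above $\om$ are two external intervals diverging upward, with a bubble region between them. Their images are two distinct external intervals leaving $f(\om)$ upward, with the image bubble between them. Points of $Q_1^\circ$ just left of $\om$ and points of $Q_2^\circ$ just right of $\om$ map to points on opposite sides of this image bubble.

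These two image points cannot lie in the same open rectangle. The open rectangle $Q'^\circ$ is foliated by ray segments running between the two boundaries of $A_{n+1}$ with no singularities, so it is a Euclidean rectangle. A horizontal (equipotential) arc inside $Q'^\circ$ joining the two image points, together with the two upward intervals from $f(\om)$, would enclose the image bubble. But the bubble contains no points of external rays crossing $O^*(t_{n+1})$, while its interior would lie inside $Q'^\circ$, which is a contradiction. Hence $f(Q_1)$ and $f(Q_2)$ lie in different rectangles unless $\om$ is critical.

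The main obstacle is this local-topology step: making rigorous that a local homeomorphism at $\om$ preserves the "separated by a bubble" configuration, and that this separation forbids a common image rectangle. It can be handled by the same enclosed-disk argument as in Lemma \ref{l:er-touch}.
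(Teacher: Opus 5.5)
Your strategy is the paper's. The paper calls the first claim evident and proves the second exactly as in your Step 2: if $\om$ is not critical, $f$ is a local homeomorphism at $\om$, so $f(Q_1)$ and $f(Q_2)$ are again $Y$-attached at $f(\om)$, ``hence not in the same closed level $n+1$ rectangle.'' The gap is this last implication, which the paper asserts and you try to justify, and your enclosure argument does not establish it. Take image points $p_1,p_2$ at a common level on the two sides of the $Y$ at $f(\om)$, and suppose both lie in one open rectangle $P$. The horizontal arc of $P$ joining them cannot be the short equipotential arc near $f(\om)$. That arc crosses either the external interval descending from $f(\om)$ or the wedge of rays crashing into $f(\om)$, and neither meets $\Sc_{n+1}$. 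So the arc runs once around $K^*$. Closing it up through $f(\om)$ gives a loop in $\ol P$ that surrounds $K^*$, with the bubble at $f(\om)$ outside it. Nothing is enclosed. This is a contradiction only if $\ol P$ is known to be a closed disk, i.e.\ only if no closed rectangle is $Y$-attached to itself. The re-merging argument of Lemma \ref{l:er-touch} concerns two distinct edges above a branch point and does not give this.

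Self-attachment does occur, so the second claim fails as stated. Take a cubic with a single escaping critical point $\om_c\in\d U^*$ ($d=3$, $d_*=2$, $m=0$). All singularities of $g_f$ lie at levels $3^{-k}\eps$, so every ray that neither crashes into $\om_c$ nor enters its bubble crosses $O^*(t_0)$. Hence $\Sc_0$ is a single open rectangle whose two vertical edges (the rays crashing into $\om_c$) meet at $\om_c$. The two preimages of $\om_c$ in $U^*$ lie on $O^*(t_{-1})$ and are not critical. At each of them, two distinct level $-1$ closed rectangles are $Y$-attached in the degenerate one-point way allowed in the proof of Lemma \ref{l:er-touch}, and both map into the unique level $0$ closed rectangle. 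Non-degenerate instances should occur for $m\ge 1$ as well. For example, take $d=4$, $d_*=2$, $\om_c\in\d U^*$, and a second escaping critical point $\om_+$ at level $s\in(4\eps,16\eps)$ (I have not checked that this configuration is realized). Then $\Sc_1$ is one rectangle self-attached along the interval below $\om_+$. Generically, each preimage of $\om_+$ on $O^*(s/4)\subset A_0$ is a non-critical branch point of two distinct $Y$-attached level $0$ rectangles. So the claim needs an extra hypothesis, namely that the rectangles on the two sides of the $Y$ at $f(\om)$ are distinct, and neither your argument nor the paper's supplies it. The lemma is not invoked later in the paper. A smaller point concerns Step 1: the inclusion $f(A_n)\subset A_{n+1}$ is false for $n=0$, since the bubble attached at a critical point of $\d U^*$ maps onto all of $f(U^*)$. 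You only need $f(\Sc_n)\subset\Sc_{n+1}$, which follows from $g_f\circ f=d\,g_f$ and the fact that $f$ sends non-singular points of the basin to non-singular points.
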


\begin{proof}
The first claim of the lemma is evident.
Assume that $Q_1$, $Q_2$ are closed level $n$ rectangles that are $Y$-attached.
Let $\om$ be the branch point of $Q_1\cup Q_2$.
If $\om$ is not a critical point of $f$, then $f$ is a local homeomorphism near $\om$,
 which implies that $f(Q_1)$, $f(Q_2)$ are also $Y$-attached, hence not
 in the same closed level $n+1$ rectangle.
\end{proof}

We now define a convenient conformal metric $\qf$, which will later be used for
estimating the moduli of fundamental annuli.
Recall that, by Lemma \ref{l:ef-circum},
an equipotential arc that crosses the external rays $R_f(\ta)$
for all $\ta\in [\al,\be]$ and for no other values of $\ta$, has $\ef$-length
equal to $2\pi\lf_\infty[\al;\be]$.
Hence, the $\ef$-length of every $O^*(t_n)$ is $2\pi\mu_n$.

\subsection{The metric $\qf$}
\label{ss:pf}
Define the metric $\qf$ on each annulus $A_n$ as the
metric $\ef$ times the \emph{indicator function} of $\Sc_n$
(equal to $1$ on $\Sc_n$ and to $0$ elsewhere)
times a constant factor $(2\pi\mu_n)^{-1}$ chosen so that the total width of
all rectangles from $\Sc_n$ equals $1$ (equivalently, $\Len_\qf(O(t))=1$
 for all $t\in (t_n;t_{n+1})$).
To define $\qf$ on $O^*(t_n)$, for all $n\in\Z$,
set $\qf|_{O^*(t_n)}$ to be $\ef$ times a large factor so that the
$\qf$-length of any segment of $O^*(t_n)$ is larger than the length of
nearby paths with the same endpoints otherwise lying in $A_n$ or $A_{n+1}$.

\begin{rem}
Informally, the definition of $\qf$ is based on two principles:
 1) the metric is nonzero only on rectangles,
 2) it is normalized so that to make the length of $O(t)$ equal 1, for
 nonsingular $O(t)$.
External rays that avoid level $-n<0$ rectangles but not level $-n+1$ rectangles can be bypassed by
 a curve encircling $K^*$; the bigger $n$ is, the harder it is to bypass.
For this reason, we increase the cost of crossing level $n$ rectangles with $n$.
On the other hand, a curve around $K^*$ is allowed to cross bubbles ``for free''
 (that is, with zero length added),
 since crossing bubbles does not contribute to the task of going around $K^*$.
Nonsingular equipotential ovals around $K^*$ have the same length, no matter
 how close to $K^*$ they go.
All this should convince the reader that considering the metric $\qf$ is
 reasonable when dealing with the external length of the family
 of piecewise smooth curves in $U^*$ going once around $K^*$.
\end{rem}

By Definition \ref{d:psiex}, $\psi_n:\Cc_n\to\uc$ maps the closures of all complementary components of $\Cc_n$ to points.
Also, every point of $\Sc_n$ has a unique external argument.
If $\ta$ is the external argument of a point $z\in\Sc_n$, then $\ta\in\Cc_n$,
and we set $\Psi_n(z):=\psi_n(\ta)$. Note that $\Psi_n$ maps $\Sc_n$ to $\uc$,
depends continuously on $z\in\Sc_n$, and can be extended
to $\ol\Sc_n$ by continuity.
Clearly, $\Psi_n$ maps any equipotential arc $I\subset \Sc_n$ to an arc $\Psi_n(I)\subset \uc$
with $\qf(I)=\lf_\infty(\Psi_n(I))$ (where $\lf_\infty$ is the Lebesgue measure on $\uc$).
Note also that $\Psi_n$ is constant on external intervals.

We will use the following important relationship between the metric $\qf$ on $A_n$ and
 the measure $\lf_n$ on $\uc$.
Under our adopted identification between $\uc$ and $O(t_{m+1})$, the $\lf_n$-measure
 of any equipotential arc $I$ in $O(t_{m+1})$ equals $\qf(I')$, where $I'\subset O(t)$ for some $t\in (t_n;t_{n+1})$
 consists of all points, where $O(t)$ intersects the external rays through $I$.

\begin{lem}
\label{l:qf-len}
Consider a piecewise smooth path $\gamma:[0;1]\to \ol A_n$.
The $\qf$-length of $\ga$ is bounded below by the length of $\Psi_n\circ\ga$,
i.e., by the length of the set of all points $\Psi_n(\ga(t))$, where $t\in [0;1]$
is such that $\ga(t)\in\Sc_n$. Thus, the $\qf$-length of a rectifiable path $\ga:[0;1]\to \ol A_n$ is bounded below
 by the $\lf_n$-measure of the set $\Arg(\ga[0;1])$.
\end{lem}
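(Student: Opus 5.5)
The first inequality (length of $\Psi_n\circ\ga$) comes from a pointwise comparison of $\qf$ with the pullback of Lebesgue measure under $\Psi_n$. The second (the $\lf_n$-measure of arguments) then follows by pushing forward to $\uc$ and using how $\lf_n$ is defined through $\psi_n$.

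\emph{Step 1: restrict to $\Sc_n$ and localize.} Outside $\Sc_n$ the metric $\qf$ vanishes on the open annulus $A_n$. On the boundary curves $O^*(t_n)$ and $O^*(t_{n+1})$, $\qf$ is a large multiple of $\ef$. So the only parts of $\ga$ that can contribute to the image length are those where $\ga(t)\in\Sc_n$, together with portions lying on $\d A_n\cap\ol\Sc_n$. We use the latter only through the fact that their $\qf$-length dominates their horizontal $\ef$-extent.

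Each open rectangle carries flat coordinates $(x,y)$ with respect to $\ef$: $y=g_f$ is vertical, and $x$ is the horizontal $\ef$-coordinate, equal to $2\pi\lf_\infty$ of the argument interval by Lemma~\ref{l:ef-circum}. In these coordinates $\qf=(2\pi\mu_n)^{-1}\sqrt{dx^2+dy^2}\ge (2\pi\mu_n)^{-1}|dx|$.

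Meanwhile $\Psi_n(z)=\psi_n(\ta(z))$, where $\psi_n$ has slope $1/\mu_n$ on $\Cc_n$ (Definition~\ref{d:psiex}) and $d\ta=dx/(2\pi)$. Hence $|d(\Psi_n\circ\ga)|=(2\pi\mu_n)^{-1}|dx|\le\qf$ along $\ga$ inside $\Sc_n$. Integrating over the (countable, open) set of times $t$ with $\ga(t)\in\Sc_n$, we get that the $\qf$-length is at least the total variation of $\Psi_n\circ\ga$ on that set. Total variation in turn dominates the Lebesgue measure of the image set $\{\Psi_n(\ga(t)):\ga(t)\in\Sc_n\}$. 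For the boundary pieces, the same inequality holds a fortiori, since $\qf$ there is at least the rectangle normalization times $\ef$.

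\emph{Step 2: pass to $\lf_n$.} By Definition~\ref{d:l_n}, for any Borel $E\subset\uc$ we have $\lf_n(E)=|\psi_n(E)|$. This holds on arcs by definition and extends via monotonicity of $\psi_n$, whose flat pieces carry zero measure. Apply this to $E=\Arg(\ga[0;1])$. Points of $\ga$ outside $\ol\Sc_n$ have arguments in $\uc\sm\Cc_n$, where $\lf_n$ vanishes, since such arguments lie in voids of level $\ge n$ on which $\psi_n$ is constant. Points in $\ol\Sc_n$ have arguments in $\Cc_n$, and $\psi_n$ of those arguments is exactly $\Psi_n(\ga(t))$, up to the finitely many vertical edges, which are measure zero. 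Hence $\lf_n(\Arg\ga)\le|\Psi_n(\ga)|\le\Len_\qf(\ga)$.

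\emph{Main obstacle.} The delicate point is the boundary of rectangles. Vertical edges of rectangles and $Y$-attached pieces (Lemma~\ref{l:er-touch}) may carry several arguments, or points not lying on a single smooth ray. Points of $\ga$ on $O^*(t_n)$ may also have arguments outside the range of the $\Sc_n$-portion of $\ga$. I would handle this by extending $\Psi_n$ continuously to $\ol\Sc_n$ and noting that a singular point contributes at most finitely many values: the edges' arguments collapse to single points under $\psi_n$, because arguments between merging rays form a void. The contribution of $\ga$ along $O^*(t_n)$ is controlled by the large weight placed on $\qf|_{O^*(t_n)}$.
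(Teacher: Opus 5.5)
Your proposal is correct and is the same argument as the paper's, whose entire proof is the remark that, in the flat $\ef$-coordinates of a rectangle, a path is at least as long as its horizontal projection; with $\qf=(2\pi\mu_n)^{-1}\ef$ on $\Sc_n$ and $\psi_n$ of slope $1/\mu_n$ on $\Cc_n$, that projection is exactly what $\Psi_n\circ\ga$ sweeps out, and your Step~2 and boundary discussion fill in details the paper leaves implicit. One small correction that changes nothing: points of $\ga$ outside $\ol\Sc_n$ can have arguments in $\Cc_n$ (the partially isolated broken-ray segments described after Definition~\ref{d:re1}), but only finitely many such arguments occur, so they are $\lf_n$-null.
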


\begin{proof}
The lemma follows from
the fact that the Euclidean length of a path is bounded below by
the length of its projection to the $x$-axis.
\end{proof}

\subsection{A lower bound on the modulus}
\label{ss:lowb}
A lower bound on the modulus $\mod(U^*\sm K^*)$ can be obtained if we estimate the modulus of $A_0$,
 see Corollary \ref{c:Gro}.
Recall that $\mu_0$ is the total length of $\Cc_0$,
 equivalently, the total width of $\Sc_0$ with respect to the metric $\ef$.
Also, $\eps$ stands for the value of $g_f$ on $O^*(t_0)=\d U^*$.

\begin{lem}
\label{l:modA0}
The modulus of $A_0$ is bounded below by $\frac{(d-1)\eps}{2\pi d_*\mu_0}$.
\end{lem}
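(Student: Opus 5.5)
The plan is to bound $\mod(A_0)$ from below by the extremal length of the family $\Fc$ of curves in $A_0$ joining $O^*(t_0)$ to $O^*(t_1)$, since $\mod(A_0)=\el(\Fc)\ge \Lambda_\gf(\Fc)$ for any conformal metric $\gf$. I would test this with $\gf=\ef$ restricted to $A_0$, possibly multiplied by the indicator of a suitable subset such as $\ol{\Sc_0}$.

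\emph{Length.} Since $\ef=|dg_f|$ along external intervals and $\ef$ is locally Euclidean, $\ef\ge |dg_f|$ as a length element. Any $\ga\in\Fc$ starts at potential $t_0=\eps$ and ends at potential $t_1=d\eps$. Hence $\Len_\ef(\ga)\ge t_1-t_0=(d-1)\eps$.

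\emph{Area.} By the coarea formula, $\area_\ef(A_0)=\int_{t_0}^{t_1}\Len_\ef(E(t)\cap A_0)\,dt$. By Lemma \ref{l:ef-circum}, $\Len_\ef(E(t)\cap A_0)=2\pi$ times the Lebesgue measure of the set of arguments of points of $E(t)\cap A_0$. A point of $A_0$ at level $t$ lies on a ray towards $K^*$ that descends into $A_0$, so its argument lies in $\Cc_0$ or in a void nested inside the structure of $A_0$. The aim is to bound this argument measure uniformly in $t$ by a multiple of $\mu_0$. Then $\area\le 2\pi\,C\mu_0(d-1)\eps$, and therefore
$$\mod(A_0)\ge \frac{(d-1)^2\eps^2}{2\pi C\mu_0 (d-1)\eps}=\frac{(d-1)\eps}{2\pi C\mu_0},$$
which is the claimed bound once $C=d_*$.

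\emph{Main obstacle: identifying $C=d_*$.} The crude bound uses $\mu_1$, the total length of $\Cc_1$. By Lemma \ref{l:mun}, $\mu_1=d\mu_0/d_0$, which only gives $C=d/d_0$; this does not obviously equal $d_*$. So I expect to refine the choice of metric.

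The first refinement to try is to put $\ef$ only on $\Sc_0$, so that the area becomes exactly $2\pi\mu_0(d-1)\eps$. One must then show that every $\ga\in\Fc$ has length $\ge (d-1)\eps/d_*$ in this metric. The difficulty is that $\ga$ may gain potential for free inside bubbles. The number of nested bubble excursions is controlled by critical multiplicities, which is where a factor $d_*$ should enter, for instance via pulling back by $f$ as in Lemma \ref{l:mapA_n}.

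An alternative is to run the crude argument on $A_{-1}$ and use $\mod(A_0)=d_*\mod(A_{-1})$. Here all levels are in the renormalized regime, so $d_{-1}=d_*$, and one can balance the constants through Lemma \ref{l:mun}.
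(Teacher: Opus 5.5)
Your core argument is sound, and it is the dual of the paper's. The paper cuts $A_0$ along the singular levels in $[t_0,t_1]$ and applies Gr\"otzsch to the resulting flat cylinders, each of $\ef$-circumference at most $2\pi\mu_1$. You instead bound the extremal length of the joining curves using $\ef$ and the coarea formula. Both routes give exactly $\mod(A_0)\ge\frac{(d-1)\eps}{2\pi\mu_1}$.

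The genuine gap is the passage from $\mu_1$ to $d_*\mu_0$, and you located it correctly. The paper bridges it only by asserting $2\pi\mu_1=2\pi d_*\mu_0$, and that is wrong. The map $f\colon O^*(t_0)\to O^*(t_1)$ is a degree $d_*$ covering that multiplies $\ef$-lengths by $d$, so Lemma \ref{l:ef-circum} gives $d\mu_0=d_*\mu_1$. This agrees with Lemma \ref{l:mun} and $d_0=d_*$. So what you, and the paper, actually prove is $\mod(A_0)\ge\frac{(d-1)d_*\eps}{2\pi d\mu_0}$. This implies the stated inequality only when $d\le d_*^2$, e.g.\ for $d=3$, $d_*=2$.

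Neither repair you suggest supplies the missing factor, and none can when $d>d_*^2$.
\begin{itemize}
\item Putting $\ef$ only on $\Sc_0$ admits joining curves of length zero. The boundary $\d U^*$ contains a critical point $c$ with a bubble $W$ attached there. Follow an external ray entering $W$ down to $\d W$, then approach $c$ inside $W$; this curve never meets $\Sc_0$.
\item Working on $A_{-1}$ (height $(d-1)\eps/d$, circumference $\le 2\pi\mu_0$) gives $\mod(A_{-1})\ge\frac{(d-1)\eps}{2\pi d\mu_0}$. Multiplying by $d_*$ returns the same constant $d/d_*$.
\end{itemize}

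The statement itself fails for $f(z)=z^5-az^2$ with $|a|$ large.
\begin{itemize}
\item The point $0$ is a superattracting fixed point lying in a degree $2$ component $K^*$.
\item Since $f(\omega z)=\omega^2f(z)$ for $\omega^3=1$, the other three critical points escape at a common rate and are cyclically permuted by the rotation. A $\Z/3$-invariant tree formed by $U^*$ and the three univalent preimage disks must be a star, so all three critical points lie on $\d U^*$.
\item Hence $\mu_1=1$, $\mu_0=2/5$, and $\eps=\frac13\log|a|+O(1)\to\infty$. The statement therefore demands $\mod(A_0)\ge\frac{5\eps}{2\pi}$.
\item But $A_0$ is the flat cylinder $\{\eps<g_f<5\eps\}$, of height $4\eps$ and circumference $2\pi$, with three bubbles glued to its bottom edge.
\item Take the metric equal to $\mu_0^{-1}\ef$ on $\{\eps<g_f<\eps+\pi\mu_0\}$, to $\ef$ on $\{g_f\ge\eps+\pi\mu_0\}$, and to $0$ on the bubbles. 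One checks that every separating curve has length at least $2\pi$ in this metric. A bubble shortcut forces a vertical descent through the weighted strip, which costs at least what it saves.
\item This gives $\mod(A_0)\le\frac{4\eps+21\pi/10}{2\pi}$, which is $<\frac{5\eps}{2\pi}$ once $\eps$ is large.
\end{itemize}

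So the correct form of the lemma has right-hand side $\frac{(d-1)\eps}{2\pi\mu_1}=\frac{(d-1)d_*\eps}{2\pi d\mu_0}$, which is exactly what your crude argument already proves.
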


\begin{proof}
Let $t_{0,0}=t_{0}<t_{0,1}<\dots <t_{0,l}=t_1$
 be a subdivision of the interval $[t_{0};t_1]$ formed by all $t$ such that $O(t)$
 contains singularities of $g_f$.
Set $A^*_{0,k}$, where $0\le k<l$, to be the annulus between $O(t_{0,k})$ and $O^*(t_{0,k+1})$.
Note that $A^*_{0,k}$ does not include the bubbles attached to $O^*(t_{0,k})$.
Then the metric $\ef$ endows the annulus $A^*_{0,k}$ with a structure of a Euclidean
 cylinder of height $t_{0,k+1}-t_{0,k}$ and of circumference $\Len_\ef(O(t_{0,k}))$.
Firstly, by the Gr\"{o}tzsch inequality, one has
$$
\mod(A_0)\ge \sum_{k=0}^{l-1} \mod(A^*_{0,k})=\sum_{k=0}^{l-1}\frac{t_{0,k+1}-t_{0,k}}{\Len_\ef(O(t_{0,k}))}.
$$
Secondly, the $\ef$-circumference of the cylinder $A^*_{0,k}$ can be estimated above as $2\pi\mu_1=2\pi d_*\mu_0$,
 by Lemma \ref{l:ef-circum}.
It follows that
$$
\mod(A_0)\ge \frac{t_1-t_{0}}{2\pi\mu_1}=\frac{(d-1)\eps}{2\pi d_*\mu_0},
$$
 as claimed.
\end{proof}

Combining Lemma \ref{l:modA0} with Corollary \ref{c:Gro}, we obtain a
 lower bound on $\mod(U^*\sm K^*)$:
\begin{equation}
\label{e:low}
\mod(U^*\sm K^*)\ge \frac{1}{d_*-1}\mod(A_0)\ge \frac{(d-1)}{2\pi d_*(d_*-1)}\cdot\frac{\eps}{\mu_0}.
\end{equation}
The lower bound stated in Theorem \ref{t:main-an} can now be deduced using an upper bound on $\mu_0$.

\begin{proof}[Proof of Theorem \ref{t:main-an}, lower bound]
By Lemma \ref{l:mun}, we have $\mu_0\le (d-1)^{m+1}d^{-m-1}$.
Plugging this into (\ref{e:low}) yields the desired estimate.
\end{proof}

The next example uses the lower bound of Theorem \ref{t:main-an}.

\begin{ex} There exists a sequence $f_m$ of degree $4$ polynomials with disconnected filled Julia sets $K_m$
and an invariant quadratic-like component $K^*_m$ of $K_m$ with the following properties:
\begin{enumerate}
  \item the invariant component $K^*_m$ of $K(f_m)$ has degree 2;
  \item there are two escaping critical points $\om_m^\pm$ of $f_m$;
  \item we have $f_m^m(\om_m^-)=\om_m^+$;
  \item the point $\om_m^-$ is not in a bubble;
  \item $g_f(\om_m^-)=\eps_m>0$;
  \item $\eps_m\to 0$ while $\eps_m(4/3)^m\to \infty$.
\end{enumerate}
For these polynomials, $\mod(U_m^*\sm K_m^*)\ge \frac{3}{2\pi}\cdot \eps_m(4/3)^m\to \infty$
while $g_{f_m}(\om_{-,m})=\eps_m\to 0$ as $m\to\infty$.

Indeed, such polynomials $f_m$ exist as follows from \cite[Theorem 1.2]{DM08}.
Note that $m=\max(\chi_m)$, where $(\A_m,\chi_m)$ is the combinatorial gap data of $f_m$,
 which is consistent with our earlier use of $m$.
To lighten the notation we now fix $m$ and mostly do not use it as a subscript.
Recall that $\mu_n=\Len_\ef(O^*(t_n))$.
By definition of $t_n$, the critical point $\om_-$ lies in $O^*(t_0)$,
while $\om_+$ lies in $O^*(4^mt_0)$.
The map $f:O^*(t_n)\to O^*(t_{n+1})$ is a covering of degree $d_n$,
where $d_n=d-\sum_{k\ge n} a_k$, cf. Section \ref{ss:holconn}.
In particular, $d_n=d_*=2$ for all $n\le 0$, and $d_n=d=4$ for all $n>m$.
For $n$ in the range $(0,m]$, the degree $d_n$ equals 3.

It is easy to compute $\mu_n$ for all $n$ using the fact that $d\mu_n=d_n\mu_{n+1}$
and that $\mu_n=1$ for all $n>m$; the formula for $\mu_n$ is independent of $\eps$, and,
in particular, $\mu_0=\frac 23(\frac 34)^m$.
By our choice of $\eps_m$, while $g_{f_m}(\om_{-,m})=\eps_m\to 0$, we, on the other hand,
see that, by equation \eqref{e:low},
$$
\mod(U_m^*\sm K_m^*)\ge \frac{(d-1)}{2\pi d_*(d_*-1)}\cdot\frac{\eps_m}{\mu_0}=\frac{3}{2\pi}\cdot \eps_m(4/3)^m\to \infty.
$$
\end{ex}

\subsection{Length and area estimates}
\label{ss:len-qf}
In the rest of Section \ref{s:geom-est}, we work with the metric $\qf$
supported on the closure of the union of $\Sc_n$ over $n\in\Z$.
Let $\Fc_\circ$ be the family of all piecewise smooth simple closed curves in $U^*\sm K^*$
that separate $K^*$ from the boundary of $U^*$.
By Theorem \ref{t:mod-above}, one has $\mod(U^*\sm A^*)\le 1/\Lambda_\qf(\Fc_\circ)$,
and, therefore, it suffices to estimate $\Lambda_\qf(\Fc_\circ)$ below.
Write $G$ for the renormalization gap of $(f,K^*)$ (see Definition \ref{d:renormG}).
Recall that $G':=G\cap\uc$ and that $b_0$ is the number of
components in $\Cc_0$.
Also, recall that $R^*(\al)=R^*_f(\al)$ is a ray towards $K^*$
 which may be a part of $R(\al)$ terminating on the boundary of some bubble,
 or it may be a concatenation of $R(\al)$ with some singularities of $g_f$ and segments of
 some external intervals so that $R^*(\al)$ extends towards $K^*$ ``as much as possible''.

The following lemma uses the identification between $\uc$ and $O(t_{m+1})$
 to reinterpret an arc $[\al;\be]$ of $\uc$ as an equipotential arc in $O(t_{m+1})$

\begin{lem}
  \label{l:len-qf}
Suppose that an arc $\gamma$ connects the rays $R^*(\al)$ and $R^*(\be)$ and
 is homotopic to $[\al;\be]$, where the homotopy is allowed to slide the endpoints of the arc
 along $R^*(\al)$ and $R^*(\be)$ but not to move them otherwise.
Then the $\qf$-length of $\gamma$ is bounded below by
$$
\frac 1{d_*(b_0+c_0)\la_0^{\delta_*}} \Hmes^{\delta_*}(G'\cap [\al;\be]),\quad \hbox{where}\quad c_0=\frac{d-d_*}{d_*-1}.
$$
It follows that the $\qf$-length of any curve $\Gamma\in\Fc_\circ$
 satisfies
$$
\mathrm{Len}_\qf(\Gamma)\ge \frac 1{d_*(b_0+c_0)\la_0^{\delta_*}} \Hmes^{\delta_*}(G').
$$
\end{lem}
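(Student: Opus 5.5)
The plan is to reduce everything to the lower bound (\ref{l:lowb}) on $\lf_{-k}[\al;\be]$, which is uniform in $k$. That bound comes from Lemma \ref{l:Hmes-Scn}. The task is then to show that $\gamma$ has $\qf$-length at least $\lf_{n}[\al;\be]$ for some level $n\le 0$ (or to reach a level $n$ where the bound is even better). Since $\gamma$ lies in $U^*\sm K^*$ and is compact, it meets only finitely many annuli $A_n$ and curves $O^*(t_n)$, all with $n<0$. Let $n$ be the smallest index with $\gamma\cap \ol A_n\ne\0$.

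\textbf{Step 1: $\Arg$ of the part of $\gamma$ in $\ol A_n$ covers $G'\cap[\al;\be]$.} For every $\ta\in G'\cap[\al;\be]$, the ray $R^*(\ta)$ extends to $K^*$ and lies in the sector between $R^*(\al)$ and $R^*(\be)$. Because $\gamma$ is homotopic to $[\al;\be]$ relative to these two rays, $\gamma$ separates infinity from $K^*$ inside that sector, so it must meet $R^*(\ta)$. Actually one needs more: every $\ta\in\Cc_{n}\cap[\al;\be]$ should be hit within $\ol A_n$, or else be bypassed only at the cost of crossing a curve $O^*(t_j)$. Here the large factor in the definition of $\qf$ on $O^*(t_j)$ is used: running along $O^*(t_j)$ is never cheaper than a nearby path in $A_j\cup A_{j+1}$. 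So we may assume $\gamma$ crosses each $O^*(t_j)$ transversally.

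\textbf{Step 2: estimate the length piecewise.} Decompose $\gamma$ into pieces $\gamma_j=\gamma\cap\ol A_j$. By Lemma \ref{l:qf-len}, $\Len_\qf(\gamma_j)\ge \lf_j(\Arg(\gamma_j)\cap\Cc_j)$. The key claim is that the union over $j$ of $\Arg(\gamma_j)$ contains $G'\cap[\al;\be]$. Moreover, since $\lf_j$ is a rescaling of $\lf_{n}$ on $\Cc_j$ with factor $(d_*/d)^{j-n}\cdot$const (Lemma \ref{l:lk-scale}), and $\lf_j\ge$ the restriction that dominates $\lf_{-\infty}=\hf$ in a suitable sense, we obtain
$$\Len_\qf(\gamma)\ge\sum_j \lf_j(\Arg(\gamma_j))\ge \lf_{-\infty}\text{-type bound}.$$
A cleaner route is the following. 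Since the $\psi_j$ are monotone with flat spots only on voids, $\lf_j(E)\ge \lf_{-\infty}(E)\cdot c$ holds for sets $E$ that are unions of level-$j$ connector pieces. This is in fact just the identity $\lf_{-\infty}(J)\le\lf_j(J)$ for a level $j$ connector $J$, up to the normalization $\mu_j$ versus Hausdorff measure. Combining these gives a bound by $\lf_{-k}$ for a single $k$, and then (\ref{l:lowb}) finishes.

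\textbf{Step 3: the second statement.} For $\Gamma\in\Fc_\circ$, cut $\Gamma$ at a point on $R^*(0)$ (recall $0\in G'$). The result is an arc homotopic to the full circle $[0;1]$ relative to that ray, and the first claim with $[\al;\be]=\uc$ gives the bound in $\Hmes^{\delta_*}(G')$.

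The main obstacle is Step 2: controlling a curve that zigzags between different levels $A_j$. Such a curve collects arguments in different connectors, measured by different $\lf_j$. The plan is to prove that $\lf_j(J)\ge\hf(J)\,\Hmes^{\delta_*}(G')/(d_*(b_0+c_0)\la_0^{\delta_*})$ for each arc $J$ and each $j$, uniformly in $j$. This is exactly Lemma \ref{l:Hmes-Scn} with $k=-j$ for $j\le 0$. It turns the sum into $\sum_j\Hmes^{\delta_*}(G'\cap\Arg(\gamma_j))$, which by subadditivity is at least $\Hmes^{\delta_*}(G'\cap[\al;\be])$.
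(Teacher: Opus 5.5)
Your final plan is correct and is essentially the paper's proof. You cover $G'\cap[\al;\be]$, up to a countable set, by the argument sets of the level pieces of $\gamma$; bound each piece's $\qf$-length below by its $\lf_n$-measure (Lemma \ref{l:qf-len}); convert this to $\Hmes^{\delta_*}$ through the level-independent bound (\ref{l:lowb}) from Lemma \ref{l:Hmes-Scn}; and finish by subadditivity. The paper differs only cosmetically: it splits $\gamma$ into components $I_j$ of $\gamma\cap\Sc_n$, whose argument sets are arcs, so Lemma \ref{l:Hmes-Scn} applies verbatim (your pieces $\gamma\cap\ol A_j$ would need this splitting anyway), and it has no need for your detour through $\lf_{-\infty}$ in Step 2.
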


\begin{proof}
By definition of the metric $\qf$, one can replace $\ga$ with
 a homotopic (rel. the endpoints) arc,
 which is not longer than $\ga$,
 has at most finitely many intersection points with the union of $O^*(t_n)$ for $n\in\Z$,
and intersects every external ray at most finitely many times.
Assume that $\ga$ itself satisfies these properties.
The intersection $\ga\cap\bigcup_{n\in\Z}\Sc_n$ consists of at most countably
many disjoint arcs $I_j$ of $\ga$, each of which is a component of $\ga\cap\Sc_n$ for some $n$,
such that the union of the corresponding sets $\Theta_j:=\Arg(I_j)$ taken over all such $j$ covers
all of $G'\cap [\al;\be]$, except at most countably many points.

Fix $j$ as above associated with $\Sc_n$.
By Lemma \ref{l:qf-len}, $\Len_\qf(I_j)\ge \lf_n(\Theta_j)$.
On the other hand, by Lemma \ref{l:Hmes-Scn},
$$
\lf_n(\Theta_j)\ge \frac 1{d_*(b_0+c_0)\la_0^{\delta_*}} \Hmes^{\delta_*}(\Theta_j\cap G').
$$
Clearly, its right-hand side is independent of $n$
and depends additively on $\Theta_j$. Since the sets $\Theta_j:=\Arg(I_j)$ cover almost all of $G'$,
and by the additivity, the total length $\Len_\qf(\ga)$
has the claimed lower bound.
\end{proof}

We also need an upper bound on the $\qf$-area of $U^*\sm K^*$.
Recall that $\mu_n=\lf_\infty(\Cc_n)$ (see Section \ref{ss:holconn}).

\begin{lem}
  \label{l:area-qf}
The $\qf$-area of $\Sc_n$ is equal to $(2\pi\mu_0)^{-1}(d/d_*)^{n}(d-1)\eps$.
It follows that the $\qf$-area of $U^*\sm K^*$ is equal to $\frac{d_*}{d-d_*}\cdot \frac{(d-1)\eps}{2\pi\mu_0}$.
\end{lem}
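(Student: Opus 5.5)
The plan is a direct computation, using that $\qf$ restricted to $\Sc_n$ is a constant multiple of the flat metric $\ef$. By Section~\ref{ss:rect}, each level $n$ open rectangle is, in the metric $\ef$, a Euclidean rectangle of height $t_{n+1}-t_n=(d-1)d^n\eps$. The horizontal widths of all level $n$ rectangles add up to $2\pi\mu_n$; this is Lemma~\ref{l:ef-circum} applied to a nonsingular equipotential $O(t)$ with $t\in(t_n;t_{n+1})$, whose rectangle-part crosses exactly the rays with arguments in $\Cc_n$. Since $\qf=(2\pi\mu_n)^{-1}\ef$ on $\Sc_n$, the $\qf$-area is $(2\pi\mu_n)^{-2}$ times the $\ef$-area. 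The $\ef$-area is width times height, so
$$
\area_\qf(\Sc_n)=\frac{(d-1)d^n\eps}{2\pi\mu_n}.
$$
The edges of rectangles, the bubbles and the partially isolated broken rays have zero $\qf$-area. The same holds for the curves $O^*(t_n)$, which are null sets even though $\qf$ is large along them.

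Next I would express $\mu_n$ through $\mu_0$ using Lemma~\ref{l:mun}. For $n=-k\le 0$ it gives $\mu_{-k}=(d_*/d)^k\mu_0$, that is, $\mu_n=(d/d_*)^{n}\mu_0$ for every $n\le 0$. Substituting this into the display turns the $n$-dependence into a pure power, $(d-1)\eps/(2\pi\mu_0)$ times a geometric factor in $n$. I would then match this factor with the stated $(d/d_*)^n$.

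For the second claim, $U^*\sm K^*$ is the union of the annuli $A_n$ with $n\le -1$ and the curves $O^*(t_n)$ with $n\le -1$. Hence $\area_\qf(U^*\sm K^*)=\sum_{n\le -1}\area_\qf(\Sc_n)$. With the first formula this is a geometric series:
$$
\sum_{n\le -1}\Bigl(\frac d{d_*}\Bigr)^{n}=\sum_{k\ge1}\Bigl(\frac{d_*}d\Bigr)^k=\frac{d_*}{d-d_*}.
$$
Multiplying by $(d-1)\eps/(2\pi\mu_0)$ gives the stated value.

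The main obstacle is the exponent bookkeeping in the middle step. The height grows like $d^n$, while the normalizing factor $(2\pi\mu_n)^{-1}$ and the width scale with $(d/d_*)^n$. The sign of the exponent depends on whether one writes levels as $n$ or $-n$, and on the convention $\mu_{-k}=d^{-k}d_*^k\mu_0$ from Lemma~\ref{l:mun}. I would first redo this step carefully against the normalization of $\qf$, namely total width $1$ on each $\Sc_n$. My direct computation above gives the factor $d_*^{n}$ rather than $(d/d_*)^n$, and that would make the total come out as $\frac{1}{d_*-1}\cdot\frac{(d-1)\eps}{2\pi\mu_0}$ instead of the stated value. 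So before asserting the stated formula I would pin down which normalization of $\Sc_n$ versus $\mu_n$ the statement intends. The summation step is routine once the per-level formula is fixed. Only an upper bound on the area is used later, in Theorem~\ref{t:mod-above}, so a convergent geometric series with ratio $<1$ is what matters.
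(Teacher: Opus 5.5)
Your computation is the same as the paper's: $\qf=(2\pi\mu_n)^{-1}\ef$ on $\Sc_n$, total $\ef$-width $2\pi\mu_n$ times height $t_{n+1}-t_n$, so $\area_\qf(\Sc_n)=(t_{n+1}-t_n)/(2\pi\mu_n)$, and then a geometric series over $n\le -1$. Bubbles, edges and the curves $O^*(t_n)$ contribute nothing, as you note.

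The two arguments differ only in how $\mu_n$ is substituted, and there your value is the correct one. The paper's proof asserts ``$\mu_n=d_*^n\mu_0$ for $n\le 0$''. That substitution is exactly what produces the factor $(d/d_*)^n$ and the total $\frac{d_*}{d-d_*}\cdot\frac{(d-1)\eps}{2\pi\mu_0}$. But $\mu_n=\lf_\infty(\Cc_n)$. For $n\le 0$ the map $\si_d:\Cc_n\to\Cc_{n+1}$ is generically $d_*$-to-one and multiplies lengths by $d$, so $d\mu_n=d_*\mu_{n+1}$, i.e.\ $\mu_n=(d/d_*)^n\mu_0$. This is Lemma~\ref{l:mun}. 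For instance, with $d=3$, $d_*=2$, $m=0$ one gets $\mu_0=2/3$ and $\mu_{-1}=4/9$, not $1/3$. The same slip appears in Lemma~\ref{l:modA0}, which uses $\mu_1=d_*\mu_0$ instead of $\mu_1=(d/d_*)\mu_0$.

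So the statement as written holds only when $d=d_*^2$. What is true, and what your computation proves, is
\[
\area_\qf(\Sc_n)=\frac{d_*^{\,n}(d-1)\eps}{2\pi\mu_0}\quad(n\le 0),\qquad
\area_\qf(U^*\sm K^*)=\frac{1}{d_*-1}\cdot\frac{(d-1)\eps}{2\pi\mu_0}.
\]

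You should commit to this result rather than hedge. Drop the sentence about ``matching'' your factor with $(d/d_*)^n$, and drop the plan to ``pin down which normalization the statement intends''. With $\qf$ and $\mu_n$ defined as in the paper, no reading makes the stated formula hold.

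Your closing remark also needs correcting. It is not enough that some geometric series converges, because the explicit constants downstream depend on this exact value. Since $\frac{1}{d_*-1}>\frac{d_*}{d-d_*}$ exactly when $d>d_*^2$, the true area can exceed the stated one. Hence in Theorem~\ref{t:upper} the factor $\frac{d_*^3}{d-d_*}$ must be replaced by $\frac{d_*^2}{d_*-1}$, and the explicit constants in Theorems~\ref{t:main-an}, \ref{t:m=0} and \ref{t:ED} change accordingly. Only the qualitative form $\mod(U^*\sm K^*)\le\ol C\eps$ survives unchanged.
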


\begin{proof}
From the viewpoint of the metric $\ef$, the set $\Sc_n$ consists of several rectangles of
height $t_{n+1}-t_n$ that have total width $2\pi\mu_n$.
On $\Sc_n$, the metric $\qf$ is just a constant multiple of $\ef$,
normalized to make the total width of $\Sc_n$ equal $1$.
Hence, $\qf=(2\pi\mu_n)^{-1}\ef$ on $\Sc_n$.
Now, the $\ef$-area of $\Sc_n$ is $2\pi\mu_n(t_{n+1}-t_n)$, which is the total width times the height,
and the $\qf$-area is therefore
$$
\area_\qf(\Sc_n)=\frac{1}{(2\pi\mu_n)^2}\area_\ef(\Sc_n)=\frac{t_{n+1}-t_n}{2\pi\mu_n}.
$$
It remains to recall that $t_n=d^n\eps$ and $\mu_n=d_*^n\mu_0$ for $n\le 0$;
the desired formula for the area of $\Sc_n$ follows.

The last claim of the lemma follows from the first claim by
 taking the sum over all $n<0$; this sum is a geometric series.
\end{proof}

\subsection{Moduli and extremal distance estimates}
\label{ss:upper}
Using Lemmas \ref{l:len-qf} and \ref{l:area-qf}, one can write
$$
\Lambda_\qf(\Fc_\circ)\ge \frac{2\pi\mu_0(d-d_*)}{d_*^3(d-1)(b_0+c_0)^2\la_0^{2\delta_*}\eps}\Hmes^{\delta_*}(G')^2
$$
Theorem \ref{t:mod-above} now implies the following.

\begin{thm}
\label{t:upper}
The modulus of $U^*\sm K^*$ satisfies the inequality
$$
\mod(U^*\sm K^*)\le\frac{d_*^3(d-1)(b_0+c_0)^2\la_0^{2\delta_*}\eps}{2\pi\mu_0(d-d_*)\Hmes^{\delta_*}(G')^2},
$$
\end{thm}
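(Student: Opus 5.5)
This is the extremal-length bound of Theorem \ref{t:mod-above} applied with the explicit metric $\qf$ of Section \ref{ss:pf}. The plan is to bound $\Len_\qf(\Fc_\circ)$ from below, bound $\area_\qf(U^*\sm K^*)$ from above, and then invert $\Lambda_\qf(\Fc_\circ)$.

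\textbf{Length.} Let $\Gamma\in\Fc_\circ$. We first check that $\Gamma$ meets every ray $R^*(\ta)$ with $\ta\in G'$, and that, after cutting $\Gamma$ at a point on one such ray, the resulting arc is homotopic to the full circle $[\al;\al]$ in the sense of Lemma \ref{l:len-qf}. The second part of Lemma \ref{l:len-qf} then gives
$$
\Len_\qf(\Fc_\circ)\ge \frac{\Hmes^{\delta_*}(G')}{d_*(b_0+c_0)\la_0^{\delta_*}}.
$$
The real content here is Lemma \ref{l:Hmes-Scn}, which gives a lower bound on $\lf_n$ that does not depend on the level; that is why curves crossing rectangles of arbitrarily deep level still pay a uniform cost.

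\textbf{Area.} We use Lemma \ref{l:area-qf}. The metric $\qf$ is supported on $\ol{\bigcup_n\Sc_n}$; the equipotentials $O^*(t_n)$ have measure zero, so their inflated weight does not contribute. Only the levels $n<0$ lie inside $U^*\sm K^*$. Summing the geometric series with ratio $d_*/d$ gives
$$
\area_\qf(U^*\sm K^*)=\frac{d_*}{d-d_*}\cdot\frac{(d-1)\eps}{2\pi\mu_0},
$$
which is finite and positive, so $\qf$ is admissible in Definition \ref{d:el}.

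\textbf{Combining.} Squaring the length bound and dividing by the area yields exactly the displayed lower bound on $\Lambda_\qf(\Fc_\circ)$, with the factor $d_*^2\cdot d_*=d_*^3$ in the denominator. Theorem \ref{t:mod-above} gives $\mod(U^*\sm K^*)\le 1/\Lambda_\qf(\Fc_\circ)$, which is the claimed inequality after rearranging.

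\textbf{Main obstacle.} The delicate point is the topological claim in the length step. A closed curve around $K^*$ may pass through bubbles, or between $Y$-attached rectangles, without lying in any $\Sc_n$. We must still ensure that the arguments of the pieces $\Gamma\cap\Sc_n$ cover $G'$ up to a countable set. The argument would be that each $R^*(\ta)$ with $\ta\in G'$ lands on $K^*$, so it separates within $U^*\sm K^*$ and must be crossed. Such a crossing happens either inside some $\Sc_n$ or at a singular point or on a broken-ray edge, and the latter occur only for countably many $\ta$. Beyond this point, the proof is bookkeeping with the constants.
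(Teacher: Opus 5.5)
Your outline is the paper's own argument: Lemma~\ref{l:len-qf} for length, Lemma~\ref{l:area-qf} for area, then Theorem~\ref{t:mod-above}. The length step, however, is false, and the point you call ``the real content'' is exactly where it breaks.

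\textbf{Lemma~\ref{l:Hmes-Scn} is false.} It cannot hold uniformly in the level. For fixed $k$ one has $\lf_{-k}(I)\le |I|/\mu_{-k}$. On the other hand, $\Hmes^{\delta_*}|_{G'}$ is a nonzero measure carried by a Lebesgue-null set, so no multiple of arc length dominates it. For example, take $G'$ the middle-thirds Cantor set ($d=3$, $d_*=2$): the arcs $[0;3^{-n}]$ give $2^{-n}$ on the left against $O(3^{-n})$ on the right. The proof of Lemma~\ref{l:ncomp1} goes wrong when $l>k$, where it uses diagram~\eqref{e:dia1} beyond level $0$.

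\textbf{Lemma~\ref{l:len-qf} fails, by an explicit curve.} Run along the equipotential just above $O^*(t_{-1})$. At each complementary arc $(\al;\be)$ of $\Cc_{-J}$ inside $\Cc_{-1}$, replace the crossing of the wedge by a detour: go down $R^*(\al)$ to the point where it merges with $R^*(\be)$, then back up $R^*(\be)$.
\begin{itemize}
\item The horizontal part costs $\lf_{-1}(\Cc_{-J})=\mu_{-J}/\mu_{-1}=(d_*/d)^{J-1}$.
\item A vertical traverse of $A_{-i}$ inside $\Sc_{-i}$ costs $\frac{(d-1)\eps}{2\pi\mu_0 d_*^{i}}$, so each detour costs $O(\eps)$.
\item There are at most $b_{-J}=O(d_*^J)$ detours.
\end{itemize}
Hence $\Len_\qf(\Fc_\circ)\le (d_*/d)^{J-1}+C d_*^J\eps$. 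This tends to $0$ as $\eps\to 0$ with the combinatorics fixed, e.g.\ along a stretching ray. Choosing $d^{-J}\asymp\eps$, you only get $\Lambda_\qf(\Fc_\circ)=O(\eps^{1-2\delta_*})$ instead of order $1/\eps$.

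So the topological issue in your last paragraph is not the obstacle. The obstacle is that a curve can pay for a small piece of $G'$ at a shallow level, where its $\lf_n$-cost is much smaller than its Hausdorff measure, and reach that level by cheap vertical detours.

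\textbf{The area step is also off.} Lemma~\ref{l:mun} gives $\mu_{-k}=(d_*/d)^k\mu_0$, whereas the proof of Lemma~\ref{l:area-qf} uses $d_*^{-k}\mu_0$. With the correct value, $\area_\qf(\Sc_{-k})=\frac{(d-1)\eps}{2\pi\mu_0 d_*^{k}}$. Your geometric series therefore has ratio $1/d_*$, not $d_*/d$, and $\area_\qf(U^*\sm K^*)=\frac{(d-1)\eps}{2\pi\mu_0(d_*-1)}$. This is larger than your value when $d>d_*^2$.

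\textbf{The inequality itself appears to fail for small $\eps$.} Consider the test function equal to $g_f/\eps$ off the wedges of the voids of levels $-1,\dots,-J+1$, and close to $1$ on those wedges. I have not fully checked the corrections near the walls and merge point of each wedge, but they appear to cost $O(1)$ energy each. Its Dirichlet energy is then $\lesssim \mu_{-J}/\eps+C d_*^J\asymp\eps^{-\delta_*}$ for $d^{-J}\asymp\eps$. This indicates $\mod(U^*\sm K^*)\gtrsim\eps^{\delta_*}$. Since $\delta_*<1$, that is incompatible with any bound $\ol C\eps$ with combinatorial $\ol C$ as $\eps\to 0$. So the problem does not look repairable by choosing a better metric.
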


It remains to estimate several values on the right-hand side.

\begin{proof}[Proof of Theorem \ref{t:main-an}]
The lower bound on the modulus of $U^*\sm K^*$ was already established in Section \ref{ss:lowb}.
For the upper bound, we use the inequality from Theorem \ref{t:upper}
 together with Lemmas \ref{l:bn0},
 \ref{l:lan}, \ref{l:mun}, and Theorem \ref{t:HdimG}.
Namely, $b_0\le (d-d_*)(d-1)^m$, by Lemma \ref{l:bn0}, and
\begin{equation}
\label{e:b0+c0}
b_0+c_0\le (d-d_*)((d-1)^m+1)\le (d-d_*)d^m,
\end{equation}
 by (\ref{e:c0}).
Here, we used the assumption that $m>0$.
Also, $\la_0\le d^{-m}$, by Lemma \ref{l:lan}, and
$$
\mu_0\ge \frac{d_*^{m+1}}{d^{m+1}},\qquad \Hmes^{\delta_*}(G')\ge \frac{1}{d_*^2},
$$
 by Lemmas \ref{l:mun} and Theorem \ref{t:HdimG}, part (3).
Substituting all these inequalities to the expression on the right-hand side of the inequality from Theorem \ref{t:upper},
 we obtain, after straightforward simplifications, that
$$
\mod(U^*\sm K^*)\le \frac{d^{3m+2}}{d_*^{3m-2}}\cdot \frac{(d-d_*)\eps}{2\pi\cdot\Hmes^{\delta_*}(G')^2}\le
\frac{d^{3m+2}(d-d_*)\eps}{2\pi\cdot d_*^{3m-6}},
$$
which completes the proof of Theorem \ref{t:main-an}.
\end{proof}

The proof of Theorem \ref{t:m=0} uses the same lemmas but the estimates
 in the case $m=0$ are a bit different.

\begin{proof}[Proof of Theorem \ref{t:m=0}]
We just indicate the differences with the proof of Theorem \ref{t:main-an}.
An upper bound on $b_0+c_0$
 is replaced with another one, $b_0+c_0\le c_0d_*$, valid only for $m=0$ (see Lemma \ref{l:bn}).
For $\la_0$, we use the upper bound $\la_0\le d_*d^{-1}$ from Lemma \ref{l:lan}.
Also, we keep the other estimates in their original form, without further relaxations.
Computational details are straightforward; for this reason, they are omitted here.
\end{proof}

Local versions of the moduli estimates deal with
 the extremal distance $\mathrm{ED}(\al;\be)$ between two (smooth or) broken rays to $K^*$
 with arguments $\al$ and $\be$ (Theorem \ref{t:ED}); these local estimates are based on
 the same techniques as above.

\begin{proof}[Proof of Theorem \ref{t:ED}]
Set $\Omega:=U^*\cap \Sigma(\al;\be)$ and $\Fc:=\Fc_\Omega(A,B)$.
Also, to shorten the notation, we temporarily denote $\Hmes^{\delta_*}(G'\cap [\al;\be])$ by $H$,
 and use the convention that $C_1$, $C_2$, $\dots$ stand for functions of $d$, $d_*$, $m$ that do not
 depend on $\al$, $\be$, or $\eps$.
By definition of extremal length (Definition \ref{d:el}), it suffices to estimate $\Lambda_\qf(\Fc)$ from below.
Firstly, the $\qf$-length of any $\ga\in\Fc$ satisfies
$\Len_{\qf}(\ga)\ge  C_1 H$, by Lemma \ref{l:len-qf}, where
$$
C_1=\frac 1{d_*(b_0+c_0)\la_0^{\delta_*}}\ge \begin{cases}
                                                \frac{d_*^{m-1}}{d^m(d-d_*)}, & \mbox{if } m>0,\\
                                               \frac {d_*-1}{(d-d_*) d_*^{1+\delta_*}}, & \mbox{if } m=0.
                                             \end{cases}
$$
On the other hand, the $\qf$-area of $\Omega$ is estimated above as $\area_\qf(U^*\sm K^*)$,
 and the latter is $C_2\eps$ by Lemma \ref{l:area-qf}, where
$$
C_2=\frac{d_*(d-1)}{2\pi\mu_0(d-d_*)}\le \frac{d^{m+1}(d-1)}{2\pi d_*^m (d-d_*)}.
$$
It follows that
$$
\mathrm{ED}(\al;\be)\ge \Lambda_\qf(\Fc)\ge  \frac{(C_1H)^2}{C_2\eps}=\frac{C_3H^2}{\eps},\quad\hbox{where}\quad C_3:=\frac{C_1^2}{C_2},
$$
as claimed.
The expression for $C$ given after Theorem \ref{t:ED} is obtained immediately from the above estimates on $C_1$ and $C_2$.
\end{proof}

Recall that the case $d=3$ and $d_*=2$ is among the most interesting; $m=0$ automatically.
In this case, $f:U^*\to f(U^*)$ is a quadratic-like restriction of the cubic map $f$.
All the estimates become concrete in this situation, for example, $\mod(U^*\sm K^*)$
 is between $0.2\eps$ and $147\eps$, as follows easily from Theorem \ref{t:m=0}
 (substituting $d=3$, $d_*=2$, and $m=0$).
The inequality from Theorem \ref{t:ED} is true with $C=0.1$.


\end{document}